\newtheorem{theorem}{Theorem}
\theoremstyle{plain}
\newtheorem{definition}{Definition}
\newtheorem{example}{Example}
\newtheorem{lemma}{Lemma}
\newtheorem{proposition}{Proposition}
\newtheorem{remark}{Remark}
\numberwithin{equation}{section}
\begin{document}
\title[AG-groupoids characterized by $(\in ,\in \vee q_{k})$ fuzzy bi-ideals]%
{Abel-Grassmann's groupoids characterized by $(\in ,\in \vee q_{k})$ fuzzy
bi-ideals}
\author{}
\maketitle

\begin{center}
$^{1}$\textbf{Madad Khan, }$^{2}$\textbf{Kifayat Ullah}

\textbf{Department of Mathematics}

\textbf{COMSATS Institute of Information Technology,}

\textbf{Abbottabad}

\textrm{e-mails: }$^{1}$\textrm{madadmath@yahoo.com, }$^{2}$\textrm{%
kifi\_marwat949@yahoo.com}
\end{center}

\textbf{Abstract.} Using the idea of a quasi-coincedece of a fuzzy point
with a fuzzy set, the concept of an $(\alpha ,\beta )$-fuzzy bi-ibeals in
AG-groupoid is introduced in this paper, which is a generalization of the
concept of a fuzzy bi-ideal in AG-groupoid and some interesting
characterizations theorems are obtained.

\textbf{Key words and phrases: }Fuzzy Algebra, AG-groupoid, LA-semigroup, $%
(\alpha ,\beta )$-fuzzy bi-ibeals, $(\in ,\in \vee q)$-fuzzy bi-ideals, $%
(\in ,\in \vee q_{k})$-fuzzy bi-ideals

\section{\protect \LARGE Introduction}

The fundamental concept of a fuzzy set was introduced by Zadeh in his paper 
\cite{L.A.Zadeh}, $1965$. For a given set $S$, a fuzzy subset $f$ of $S$ is
a mapping $f:S\longrightarrow \lbrack 0,1]$, where $[0,1]$ is the unit
interval. After taking the idea of fuzzification, a lot of research papers
has been published by different mathematicians in different fields of
mathematics, which shows the importance and application of fuzzy set to set
theory, group theory, semigroup theory, groupoids, real analysis, measure
theory and topology etc. Recently in \cite{Zadeh}, Zadeh give the
relationship of fuzzy set with that of probability.

Fuzzy set was applied to generalized some basic concepts of general topology
in \cite{cha}. The fuzzy group was firstly developed by Rosenfeld \cite{A.
Rosenfeld}. Using of fuzzy sets in semigroup was developed by Kuroki \cite%
{N. Kuroki 2}, and study fuzzy bi-ideals in semigroup \cite{N. Kuroki}.
Murali \cite{Murali} define the concept of belongingness of a fuzzy point to
a fuzzy subset \ under a natural equivalence on a fuzzy subset. In \cite{Pu}
the idea of quasi-coincidence of a fuzzy point with a fuzzy set is defined.
These ideas played important role in generating some different types of
fuzzy subgroups. Using these ideas Bhakat and Das in \cite{Bakht 1} and \cite%
{Bakht 2} gave the concept of $(\alpha ,\beta )$-fuzzy subgroups, where $%
\alpha ,\beta \in \{ \in ,q,\in \vee q,\in \wedge q\}$ and $\alpha \neq \in
\wedge q$. The concept of $(\in ,\in \vee q)$-fuzzy subgroups is viable
generalization of Rosenfeld's fuzzy subgroupoid. $(\in ,\in \vee q_{k})$%
-fuzzy ideals, $(\in ,\in \vee q_{k})$-fuzzy quasi-ideals and $(\in ,\in
\vee q_{k})$-fuzzy bi-ideals of a semigroup is defined in \cite{junn2}, also
characterization of different classes of semigroup by the properties of
these fuzzy ideals has been discussed in this paper.

In this paper we define $(\in ,\in \vee q)$ and $(\in ,\in \vee q_{k})$%
-fuzzy (ideals, bi-ideals, generalized bi-ideals, quasi ideals and interior
ideals) of an Abel-Grassmann's groupoid and characterize its different
classes by these fuzzy ideals. Abel-Grassmann's groupoid is a
non-associative algebraic structure midway between a groupoid and a
commutative semigroup.

The concept of AG-groupoid was first given by M. A. Kazim and M.\
Naseeruddin \cite{Kaz} in $1972$. An Abel-Grassmann's groupoid (AG-groupoid) 
\cite{protic} or A left almost semigroup (LA-semigroup) \cite{Kaz} is a
groupoid $S$ holding the following left invertive law 
\begin{equation}
(ab)c=(cb)a\text{, for\ all }a\text{, }b\text{, }c\in S\text{.}  \tag{$1$}
\end{equation}%
In an AG -groupoid, the medial law \cite{Kaz} holds, 
\begin{equation}
(ab)(cd)=(ac)(bd)\text{, for\ all }a\text{, }b\text{, }c\text{, }d\in S\text{%
.}  \tag{$2$}
\end{equation}%
The left identity in an AG -groupoid if exists is unique \cite{Mus3}.

In an AG -groupoid $S$ with left identity the paramedial law holds,%
\begin{equation}
(ab)(cd)=(db)(ca)\text{, for\ all }a\text{, }b\text{, }c\text{, }d\in S\text{%
.}  \tag{$3$}
\end{equation}

An AG -groupoid is a non-associative algebraic structure mid way between a
groupoid and a commutative semigroup with wide applications in the theory of
flocks. An AG -groupoid with right identity becomes a commutative semigroup
with identity \cite{Mus3}. If an AG-groupoid contains left identity, the
following law holds,

\begin{equation}
a(bc)=b(ac)\text{, for\ all }a\text{, }b\text{, }c\in S\text{.}  \tag{$4$}
\end{equation}

\section{\protect \LARGE Preliminaries }

Let $S$ be an AG-groupoid. By AG-subgroupiod of $S$ we means a non-empty
subset $A$ of $S$ such that $A^{2}\subseteq A$, and by a left (right) ideal
of $S$ we mean a non-empty subset $A$ of $S$ such that $SA\subseteq A$ ($%
AS\subseteq A$). By two-sided ideals or simply ideal, we mean a non-empty
subset of $S$ which is both a left and a right ideal of $S$. An
AG-subgroupiod $A$ of $S$ is called bi-ideal of $S$ if $(AS)A\subseteq A$. A
subset $A$ of $S$ is called generalized bi-ideal of $S$ if $(AS)A\subseteq A$%
. An AG-subgroupiod $A$ of $S$ is called interior ideal of $S$ if $%
(SA)S\subseteq A$.

A fuzzy subset $f$ of a given set $S$, as described above, is an arbitrary
function $f:S\longrightarrow \lbrack 0$,$1]$, where $[0$,$1]$ is the usual
closed interval of real numbers. For any two fuzzy subsets $f$ and $g$ of $S$%
, $f\subseteq g$ means that, $f(x)\leq g(x)$ for all $x$ in $S$. The symbols 
$f\cap g$ and $f\cup g$ will means the following fuzzy subsets of $S$%
\begin{eqnarray*}
(f\cap g)(x) &=&\min \{f(x),g(x)\}=f(x)\wedge g(x) \\
(f\cup g)(x) &=&\max \{f(x),g(x)\}=f(x)\vee g(x)
\end{eqnarray*}

for all $x$ in $S$.

Let $f$ and $g$ be any fuzzy subsets of an AG-groupoid $S$, then the product 
$f\circ g$ is defined by

\begin{equation*}
\left( f\circ g\right) (a)=\left \{ 
\begin{array}{c}
\dbigvee \limits_{a=bc}\left \{ f(b)\wedge g(c)\right \} \text{, if there
exist }b,\text{ }c\in S\text{, such that }a=bc \\ 
0,\text{ \  \  \  \  \  \  \  \  \  \  \  \  \  \  \  \  \  \  \  \  \  \  \  \  \  \  \  \  \  \  \  \  \  \
\  \  \  \  \  \  \  \  \  \  \  \  \  \ otherwise.}%
\end{array}%
\right.
\end{equation*}

Let $A$ be a subset of an AG-groupoid $S$, then the characteristic function
of $A$, that is $C_{A}$ is defined by

\begin{center}
$C_{A}(x)=\left \{ 
\begin{array}{c}
1\text{, if }x\in A, \\ 
0\text{, if }x\notin A.%
\end{array}%
\right. $
\end{center}

\begin{definition}
\label{d 1}$(i)$A fuzzy subset $f$ of an AG-groupoid $S$ is called a fuzzy
AG-subgroupoid of $S$ if $f(xy)\geq f(x)\wedge f(y)$ for all $x$, $y\in S.$

$(ii)$A fuzzy AG-subgroupoid $f$ of an AG-groupoid $S$ is called fuzzy
bi-ideal of $S$ if $f((xy)z)\geq f(x)\wedge f(z)$, for all $x$, $y$ and $%
z\in S$.

$(iii)$A fuzzy subset $f$ of an AG-groupoid $S$ is called fuzzy generalized
bi-ideal of $S$ if $f((xy)z)\geq f(x)\wedge f(z)$, for all $x$, $y$ and $%
z\in S$.
\end{definition}

\begin{definition}
Let $f$ be any fuzzy subsets of an AG-groupoid $S$, then for all $t\in (0,1]$%
, the set $f_{t}=\{x\in S\mid f(x)\geq t\}$ is called a level subset of $S$.

\begin{theorem}
Let $S$ be an AG-groupoid, and $f$ be a fuzzy subset of $S$. Then $f$ is a
fuzzy bi-ideal of $S$ if and only if the level subset $f_{t}(\neq \{ \})$ is
a bi-ideal of $S$ for all $t\in (0,1]$.
\end{theorem}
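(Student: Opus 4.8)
The plan is to prove the two implications separately, translating each defining inequality for a fuzzy bi-ideal into a membership statement about level sets, and conversely. Throughout, fix a fuzzy subset $f$ of $S$ and recall that $x \in f_t$ means exactly $f(x) \geq t$.

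First I would prove the forward direction: assume $f$ is a fuzzy bi-ideal and let $t \in (0,1]$ be such that $f_t \neq \varnothing$. To see $f_t$ is an AG-subgroupoid, take $x, y \in f_t$, so $f(x) \geq t$ and $f(y) \geq t$; then $f(xy) \geq f(x) \wedge f(y) \geq t$, hence $xy \in f_t$, i.e. $f_t^2 \subseteq f_t$. To see $(f_t S) f_t \subseteq f_t$, take an arbitrary element of $(f_t S) f_t$, which has the form $(xs)z$ with $x, z \in f_t$ and $s \in S$; then by Definition \ref{d 1}$(ii)$, $f((xs)z) \geq f(x) \wedge f(z) \geq t$, so $(xs)z \in f_t$. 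Thus $f_t$ is a bi-ideal of $S$ for every such $t$.

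For the converse, assume $f_t$ is a bi-ideal of $S$ whenever it is non-empty, and I would argue the two fuzzy inequalities by the standard ``take $t$ equal to the right-hand side'' trick. For the subgroupoid condition, fix $x, y \in S$ and set $t = f(x) \wedge f(y)$. If $t = 0$ the inequality $f(xy) \geq 0$ is trivial; if $t > 0$ then $x, y \in f_t$, so $f_t \neq \varnothing$ and $f_t^2 \subseteq f_t$ gives $xy \in f_t$, i.e. $f(xy) \geq t = f(x) \wedge f(y)$. Similarly, for the bi-ideal condition fix $x, y, z \in S$ and put $t = f(x) \wedge f(z)$; if $t > 0$ then $x, z \in f_t$, hence $(xy)z \in (f_t S) f_t \subseteq f_t$, giving $f((xy)z) \geq t = f(x) \wedge f(z)$, and the case $t = 0$ is again trivial. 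Combining the two conditions shows $f$ is a fuzzy bi-ideal.

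There is no serious obstacle here; the only point requiring a little care is the handling of the degenerate case $t = 0$ (which is why level sets are defined only for $t \in (0,1]$ and the non-emptiness hypothesis is imposed), and making sure the generic element of $(f_t S) f_t$ is written correctly as $(xs)z$ with the factors from the right sets. The argument is a routine transfer between pointwise fuzzy inequalities and level-set containments, entirely parallel to the classical semigroup case but using the AG-groupoid definitions from Definition \ref{d 1}.
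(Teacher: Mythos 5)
Your proof is correct and is exactly the standard level-set argument the paper has in mind: the paper itself gives no proof of this statement (and dismisses the essentially identical Theorem \ref{T 2.1} as ``straight forward''), so your write-up supplies precisely the intended routine verification. Both directions are handled properly, including the non-emptiness/degenerate-$t$ issue and the correct form $(xs)z$ of a generic element of $(f_tS)f_t$.
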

\end{definition}

Let $F(S)$ denotes the set of all fuzzy subsets of $S$, then $(F(S),\circ )$
is an AG-groupiod \cite{Madad Khan}.

The proofs of the following lemmas are available in \cite{Nouman}.

\begin{lemma}
A non-empty subset $B$ of an AG-groupoid $S$ is bi-ideal of $S$ if and only
if the $C_{B}$ is a fuzzy bi-ideal of $S$.
\end{lemma}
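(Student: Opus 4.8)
The plan is to unwind both sides of the equivalence into elementary statements about membership, exploiting the fact that $C_{B}$ takes only the values $0$ and $1$. Recall that $C_{B}$ being a fuzzy bi-ideal of $S$ means two things: first, $C_{B}(xy)\geq C_{B}(x)\wedge C_{B}(y)$ for all $x,y\in S$ (the fuzzy AG-subgroupoid condition of Definition~\ref{d 1}$(i)$), and second, $C_{B}((xy)z)\geq C_{B}(x)\wedge C_{B}(z)$ for all $x,y,z\in S$. On the other side, $B$ being a bi-ideal of $S$ means $B^{2}\subseteq B$ together with $(BS)B\subseteq B$.

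For the forward implication I would assume $B$ is a bi-ideal. To verify the AG-subgroupoid inequality, fix $x,y\in S$: if $C_{B}(x)\wedge C_{B}(y)=0$ the inequality is immediate since $C_{B}\geq 0$; otherwise $C_{B}(x)=C_{B}(y)=1$, so $x,y\in B$, hence $xy\in B^{2}\subseteq B$ and $C_{B}(xy)=1$. The bi-ideal inequality is handled the same way: given $x,y,z\in S$ with $C_{B}(x)\wedge C_{B}(z)=1$ we get $x,z\in B$, so $(xy)z\in (BS)B\subseteq B$, whence $C_{B}((xy)z)=1\geq C_{B}(x)\wedge C_{B}(z)$; the case $C_{B}(x)\wedge C_{B}(z)=0$ is trivial.

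For the converse I would assume $C_{B}$ is a fuzzy bi-ideal. To see $B^{2}\subseteq B$, take $a=xy$ with $x,y\in B$; then $C_{B}(a)\geq C_{B}(x)\wedge C_{B}(y)=1$, so $a\in B$. To see $(BS)B\subseteq B$, take $a=(xy)z$ with $x,z\in B$ and $y\in S$; then $C_{B}(a)\geq C_{B}(x)\wedge C_{B}(z)=1$, so $a\in B$. Since $B$ is non-empty by hypothesis (so that $C_{B}$ is indeed a fuzzy subset in the first place), this shows $B$ is an AG-subgroupoid satisfying $(BS)B\subseteq B$, i.e.\ a bi-ideal.

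I expect no genuine obstacle here: the only point requiring care is the bookkeeping of the case split on whether the relevant minimum $C_{B}(\cdot)\wedge C_{B}(\cdot)$ equals $0$ or $1$, which is forced because $C_{B}$ is two-valued. As an alternative packaging one could route both directions through the product $\circ$, using that $C_{A}\circ C_{B}=C_{AB}$ for subsets $A,B$ of $S$ (so that $C_{B}\circ C_{B}=C_{B^{2}}$ and $(C_{B}\circ C_{S})\circ C_{B}=C_{(BS)B}$) and comparing with the operator characterization of fuzzy bi-ideals, but the direct argument above is shorter and self-contained.
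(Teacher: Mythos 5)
Your proof is correct and is exactly the standard argument: the paper itself does not reproduce a proof of this lemma but defers to the thesis \cite{Nouman}, where the same two-valued case analysis on $C_{B}$ is carried out. The only point worth stating explicitly, which you already note in passing, is that the non-emptiness of $B$ is what makes $C_{B}$ a nonzero fuzzy subset and $B$ eligible to be called a bi-ideal; otherwise nothing is missing.
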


\begin{lemma}
Let $A$ and $B$ be any non-empty subsets of an AG-groupoid $S$, then the
following properties hold.

$(i)$ $C_{A}\cap C_{B}=C_{A\cap B}$,

$(ii)$ $C_{A}\circ C_{B}=C_{AB}$.
\end{lemma}

\section{$(\protect \alpha ,\protect \beta )$-fuzzy bi-ibeals}

From this section, for a fuzzy subset $f$ of an AG-groupiod $S$ and $t\in
(0,1],$ we have the following new concepts,

\begin{enumerate}
\item $x_{t}\in f$ means $f(x)\geq t$,

\item $x_{t}qf$ means $f(x)+t>1$,

\item $x_{t}\alpha \vee \beta f$ means $x_{t}\alpha f$ or $x_{t}\beta f$,

\item $x_{t}\alpha \wedge \beta f$ means $x_{t}\alpha f$ and $x_{t}\beta f$,

\item $x_{t}\overline{\alpha }f$ means $x_{t}\alpha f$ does not holds.
\end{enumerate}

In what follows let $S$ denotes an AG-groupoid and $\alpha ,\beta $ denotes
any one of $\in ,$ $q,$ $\in \vee q,$ $\in \wedge q$ unless otherwise
specified.

\begin{definition}
\label{def 1}Let $S$ be an AG-groupoid, and $f$ be a fuzzy subset of $S$.
Then $f$ is an $(\alpha ,\beta )$-fuzzy subgroupoid of $S$, if for all $x$, $%
y\in S$ and $t$, $r\in (0,1]$, we have $x_{t}\alpha f$, and $y_{r}\alpha
f\Longrightarrow (xy)_{t\wedge r}\beta f$.
\end{definition}

\begin{definition}
\label{def 2}Let $S$ be an AG-groupoid, and $f$ be a fuzzy subset of $S$.
Then $f$ is an $(\alpha ,\beta )$-fuzzy generalized bi-ibeal of $S$, if for
all $x$, $y$, $z\in S$ and $t$, $r\in (0,1]$, we have $x_{t}\alpha f$ and $%
z_{r}\alpha f\Longrightarrow ((xy)z)_{t\wedge r}\beta f.$
\end{definition}

\begin{definition}
Let $S$ be an AG-groupoid, and $f$ be a fuzzy subset of $S$. Then $f$ is an $%
(\alpha ,\beta )$-fuzzy bi-ibeal of $S$ if definition \ref{def 1} and \ref%
{def 2} holds.
\end{definition}

Every fuzzy bi-ideal of $S$ is an $(\in ,\in )$-fuzzy bi-ibeal of $S$ as
shown in the following theorem.

\begin{theorem}
\label{T 3.1}For a fuzzy subset $f$ of an AG-groupiod $S$, the following are
equivalent,

$(i)$ $f$ is a fuzzy bi-ideal of $S$

$(ii)$ $f$ is an $(\in ,\in )$-fuzzy bi-ibeals.
\end{theorem}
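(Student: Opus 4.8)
The plan is to prove the two implications separately, translating in each direction between the pointwise inequalities defining a fuzzy bi-ideal and the membership statements of Definitions \ref{def 1} and \ref{def 2} specialized to $\alpha=\beta=\in$. Recall that $x_t\in f$ simply means $f(x)\ge t$, so the $(\in,\in)$ condition for a fuzzy subgroupoid reads: $f(x)\ge t$ and $f(y)\ge r$ imply $f(xy)\ge t\wedge r$, and similarly the $(\in,\in)$ generalized bi-ideal condition reads: $f(x)\ge t$ and $f(z)\ge r$ imply $f((xy)z)\ge t\wedge r$. So the whole theorem amounts to showing that these quantified implications over all $t,r\in(0,1]$ are equivalent to the single inequalities $f(xy)\ge f(x)\wedge f(y)$ and $f((xy)z)\ge f(x)\wedge f(z)$.

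First I would prove $(i)\Rightarrow(ii)$. Assume $f$ is a fuzzy bi-ideal. Fix $x,y\in S$ and $t,r\in(0,1]$ with $x_t\in f$ and $y_r\in f$, i.e. $f(x)\ge t$ and $f(y)\ge r$. Then $f(xy)\ge f(x)\wedge f(y)\ge t\wedge r$, which is exactly $(xy)_{t\wedge r}\in f$; this gives Definition \ref{def 1}. The generalized bi-ideal part is identical: from $f(x)\ge t$ and $f(z)\ge r$ and $f((xy)z)\ge f(x)\wedge f(z)$ we get $f((xy)z)\ge t\wedge r$, i.e. $((xy)z)_{t\wedge r}\in f$, giving Definition \ref{def 2}. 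Hence $f$ is an $(\in,\in)$-fuzzy bi-ideal.

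For $(ii)\Rightarrow(i)$ the standard device is to plug in the sharpest admissible thresholds. Assume $f$ is an $(\in,\in)$-fuzzy bi-ideal. Fix $x,y\in S$ and set $t=f(x)$, $r=f(y)$. If both are positive we may apply the hypothesis directly: $x_t\in f$ and $y_r\in f$ hold trivially (with equality), so $(xy)_{t\wedge r}\in f$, i.e. $f(xy)\ge t\wedge r=f(x)\wedge f(y)$. The only subtlety is the degenerate case where $f(x)=0$ or $f(y)=0$: then $f(x)\wedge f(y)=0\le f(xy)$ holds automatically since $f$ maps into $[0,1]$, so the inequality is vacuous there and no threshold in $(0,1]$ is needed. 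The same two-line argument with $t=f(x)$, $r=f(z)$ handles $f((xy)z)\ge f(x)\wedge f(z)$, again with the $f(x)=0$ or $f(z)=0$ case being trivial. Combining, $f$ is a fuzzy bi-ideal of $S$.

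I do not anticipate any real obstacle; this is a routine unwinding of definitions and is essentially the AG-groupoid analogue of the classical semigroup fact. The only point requiring a word of care is the boundary case $t=0$ or $r=0$, which is excluded from the quantifier range $(0,1]$ in Definitions \ref{def 1}--\ref{def 2}; one simply observes that the corresponding inequality $f(\cdot)\ge 0$ is automatic, so restricting to positive thresholds loses nothing. Everything else is immediate from the definition $x_t\in f\iff f(x)\ge t$ together with the fact that $\wedge$ on $[0,1]$ is the infimum.
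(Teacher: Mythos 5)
Your proposal is correct and follows essentially the same route as the paper's proof: the forward direction unwinds $x_t\in f$ to $f(x)\ge t$ and applies the defining inequalities, and the reverse direction substitutes $t=f(x)$, $r=f(y)$ (resp.\ $r=f(z)$). Your explicit treatment of the degenerate case $f(x)=0$ or $f(y)=0$ is a small point of extra care that the paper's proof silently skips over, since $f(x)=0$ would put the threshold outside the admissible range $(0,1]$; otherwise the two arguments coincide.
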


\begin{proof}
$(i)\Longrightarrow (ii)$

Let $x,y\in S$ and $t,r\in (0,1]$ be such that $x_{t}$, $y_{r}\in f.$ Then $%
f(x)\geq t$ and $f(y)\geq r.$ Now by definition \ref{d 1}$(i)$%
\begin{equation*}
f(xy)\geq f(x)\wedge f(y)\geq t\wedge r\text{,}
\end{equation*}

implies that $(xy)_{t\wedge r}\in f,$ and let $x$, $y$, $z\in S$ and $t,r\in
(0,1]$ be such that $x_{t}$, $z_{r}\in f$. Then$.f(x)\geq t$ and $f(z)\geq
r. $ Now by definition \ref{d 1}$(ii)$%
\begin{equation*}
f((xy)z)\geq f(x)\wedge f(z)\geq t\wedge r\text{,}
\end{equation*}

which implies that $((xy)z)_{t\wedge r}\in f$. Therefore $f$ is an $(\in
,\in )$-fuzzy bi-ibeals.

$(ii)\Longrightarrow (i)$

Let $x,y\in S.$ Since $x_{f(x)}\in f$ and $y_{f(y)}\in f,$ since $f$ is an $%
(\in ,\in )$-fuzzy bi-ibeals, so $(xy)_{f(x)\wedge f(y)}\in f$, it follows
that $f(xy)\geq f(x)\wedge f(y)$, and let $x$, $y$, $z\in S$. Since $%
x_{f(x)}\in f$ and $z_{f(z)}\in f$ and $f$ is an $(\in ,\in )$-fuzzy
bi-ibeals so $((xy)z)_{f(x)\wedge f(z)}\in f$, it follows that $f((xy)z)\geq
f(x)\wedge f(z)$, so $f$ is a fuzzy bi-ideal of $S$.
\end{proof}

\begin{theorem}
Let $f$ be a non-zero $(\alpha ,\beta )$-fuzzy bi-ideal of $S$. Then the set 
$f_{0}=\{x\in S\mid f(x)>0\}$ is a bi-ibeal of $S$.
\end{theorem}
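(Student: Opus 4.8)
The plan is to verify directly that $f_0$ meets the three requirements of a bi-ideal: it is non-empty, it is an AG-subgroupoid (i.e.\ $f_0^2 \subseteq f_0$), and $(f_0 S)f_0 \subseteq f_0$. Non-emptiness is immediate: since $f$ is non-zero, $f(a) > 0$ for some $a \in S$, that is, $a \in f_0$.

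The two closure statements rest on a pair of elementary observations about fuzzy points. First, whenever $f(x) > 0$ one can produce a level $t \in (0,1]$ with $x_t \alpha f$, at least when $\alpha \in \{\in, q, \in \vee q\}$: if $\alpha \in \{\in, \in \vee q\}$ take $t = f(x)$, so that $f(x) \geq t$ and hence $x_t \in f$; if $\alpha = q$ take $t = 1$, so that $f(x) + 1 > 1$ and hence $x_1 q f$. Second, if $a_s \beta f$ for some $s \in (0,1]$, then $f(a) > 0$ regardless of which of $\in, q, \in \vee q, \in \wedge q$ the symbol $\beta$ denotes: the condition $a_s \beta f$ unwinds either to $f(a) \geq s$ or to $f(a) + s > 1$ (for $\in \wedge q$ to both, for $\in \vee q$ to one of them), and in the first case $f(a) \geq s > 0$, while in the second $f(a) > 1 - s \geq 0$ since $s \leq 1$, so again $f(a) > 0$.

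Granting these, the argument is short. Let $x, y \in f_0$, and choose $t, r \in (0,1]$ with $x_t \alpha f$ and $y_r \alpha f$ as above. Since $f$ is an $(\alpha, \beta)$-fuzzy subgroupoid of $S$, Definition \ref{def 1} gives $(xy)_{t \wedge r} \beta f$, and as $t \wedge r \in (0,1]$ the second observation yields $f(xy) > 0$, i.e.\ $xy \in f_0$; hence $f_0^2 \subseteq f_0$. Likewise, let $x, z \in f_0$ and $y \in S$ be arbitrary; choosing $t, r$ with $x_t \alpha f$ and $z_r \alpha f$, the fact that $f$ is an $(\alpha, \beta)$-fuzzy generalized bi-ideal of $S$ (Definition \ref{def 2}) gives $((xy)z)_{t \wedge r} \beta f$, so $f((xy)z) > 0$ and $(xy)z \in f_0$; hence $(f_0 S)f_0 \subseteq f_0$. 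Combining the three facts, $f_0$ is a bi-ideal of $S$.

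The main (and essentially only) delicate point is the first observation in the case $\alpha = \in \wedge q$: there one needs $t \in (0,1]$ with $f(x) \geq t$ \emph{and} $f(x) + t > 1$ simultaneously, which forces $f(x) > 1/2$. Thus the argument as written settles the cases $\alpha \in \{\in, q, \in \vee q\}$ --- precisely those used later in the paper --- while for $\alpha = \in \wedge q$ the statement should be read with the customary proviso (e.g.\ $f \geq 1/2$ on $f_0$, or simply $\alpha \ne \in \wedge q$). Beyond this bookkeeping over the possible values of $\alpha$ and $\beta$, there is no real obstacle.
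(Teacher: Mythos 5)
Your proof is correct and follows essentially the same route as the paper's: from $f(x)>0$ produce a level $t$ with $x_{t}\alpha f$, apply the defining implication to get $(xy)_{t\wedge r}\beta f$, and observe that this forces $f(xy)>0$ --- the paper merely phrases the last step as a contradiction with the assumption $f(xy)=0$. Your explicit caveat about $\alpha =\in \wedge q$ matches the paper, which silently treats only $\alpha \in \{ \in ,q,\in \vee q\}$, consistent with the convention $\alpha \neq \in \wedge q$ mentioned in the introduction.
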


\begin{proof}
Let $x$, $y\in f_{0}\subseteq S,$ then $f(x)>0$ and $f(y)>0$. Assume that $%
f(xy)=0$. If $\alpha \in \{ \in ,\in \vee q\}$ then $x_{f(x)}\alpha f$ and $%
y_{f(y)}\alpha f$ but $(xy)_{f(x)\wedge f(y)}\overline{\beta }f$ for every $%
\beta \in \{ \in ,q,\in \vee q,\in \wedge q\}$, a contradiction. Note that $%
x_{1}qf$ and $y_{1}qf$ but $(xy)_{1\wedge 1}=(xy)_{1}\overline{\beta }f$ for
every $\beta \in \{ \in ,q,\in \vee q,\in \wedge q\}$, a contradiction.
Hence $f(xy)>0,$ that is $xy\in f_{0}$. Now let $x$, $z\in f_{0}$ and $y\in
S,$ $f(x)>0$ and $f(z)>0$. Assume that $f((xy)z)=0$. If $\alpha \in \{ \in
,\in \vee q\}$ then $x_{f(x)}\alpha f$ and $z_{f(z)}\alpha f$ but $%
((xy)z)_{f(x)\wedge f(z)}\overline{\beta }f$ for every $\beta \in \{ \in
,q,\in \vee q,\in \wedge q\}$, a contradiction. Note that $x_{1}qf$ and $%
z_{1}qf$ but $((xy)z)_{1\wedge 1}=((xy)z)_{1}\overline{\beta }f$ for every$%
\beta \in \{ \in ,q,\in \vee q,\in \wedge q\}$, a contradiction. Hence $%
f((xy)z)>0$, that is $(xy)z\in f_{0}$. Consequently, $f_{0}$\ is a bi-ideal
of $S$.
\end{proof}

\begin{theorem}
Let $f$ be a fuzzy subset and $B$ is a bi ideal of $S$ such that $f(x)=0$
for all $x\in S\backslash B$ and $f(x)\geq 0.5$ for all $x\in B$. Then

$(i)$ $f$ is a $(q,\in \vee q)$-fuzzy bi-ideal of $S,$

$(ii)$ $f$ is a $(\in ,\in \vee q)$-fuzzy bi-ideal of $S$.
\end{theorem}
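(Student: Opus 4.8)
The plan is to verify the defining implications of Definitions \ref{def 1} and \ref{def 2} directly, exploiting the fact that any fuzzy point related to $f$ via $\in$ or $q$ (with parameter in $(0,1]$) must sit over a point of $B$. Concretely, if $x_{t}qf$ then $f(x)>1-t\geq 0$, and if $x_{t}\in f$ then $f(x)\geq t>0$; in either case $f(x)>0$, so the hypothesis that $f$ vanishes on $S\setminus B$ forces $x\in B$. Since $B$ is a bi-ideal it is in particular an AG-subgroupoid, so $B^{2}\subseteq B$, and it also satisfies $(BS)B\subseteq B$. Hence the relevant products ($xy$ in the subgroupoid condition, $(xy)z$ in the generalized bi-ideal condition, with $y\in S$ arbitrary) again lie in $B$, and therefore $f$ evaluates to at least $0.5$ on them.

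For part $(i)$, suppose $x_{t}qf$ and $y_{r}qf$. By the above $x,y\in B$, so $xy\in B$ and $f(xy)\geq 0.5$. Now split on $t\wedge r$: if $t\wedge r\leq 0.5$ then $f(xy)\geq 0.5\geq t\wedge r$, so $(xy)_{t\wedge r}\in f$; if $t\wedge r>0.5$ then $f(xy)+(t\wedge r)\geq 0.5+(t\wedge r)>1$, so $(xy)_{t\wedge r}qf$. In both cases $(xy)_{t\wedge r}\in\vee q\,f$. The generalized bi-ideal condition is handled identically: from $x_{t}qf$, $z_{r}qf$ we get $x,z\in B$, hence $(xy)z\in(BS)B\subseteq B$ and $f((xy)z)\geq 0.5$, and the same dichotomy on $t\wedge r$ yields $((xy)z)_{t\wedge r}\in\vee q\,f$. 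Thus $f$ is a $(q,\in\vee q)$-fuzzy bi-ideal.

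For part $(ii)$ the argument is the same, only starting from $x_{t}\in f$, $y_{r}\in f$ (resp.\ $x_{t}\in f$, $z_{r}\in f$): since $t,r\in(0,1]$ these memberships again give $f(x),f(y)>0$, hence $x,y\in B$, and the closure of $B$ together with the $t\wedge r$ case split finishes it exactly as before. I do not expect any real obstacle here; the only points requiring a moment's care are (a) recalling that "bi-ideal" bundles both $B^{2}\subseteq B$ and $(BS)B\subseteq B$, so both halves of the bi-ideal definition must be checked, and (b) keeping the inequalities straight around the threshold $0.5$ — in particular that in the second case $f(\text{product})+(t\wedge r)\geq 0.5+(t\wedge r)>1$ is strict precisely because $t\wedge r>0.5$ strictly, which is what the relation $q$ requires.
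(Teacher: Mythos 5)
Your proof is correct and follows essentially the same route as the paper's: reduce to $x,y,z\in B$, use $B^{2}\subseteq B$ and $(BS)B\subseteq B$ to get the product in $B$ with $f\geq 0.5$ there, then split on whether $t\wedge r\leq 0.5$ or $t\wedge r>0.5$ to land in $\in$ or $q$ respectively. The only difference is that you explicitly justify why $x_{t}qf$ or $x_{t}\in f$ forces $x\in B$ (via $f(x)>0$), a step the paper's proof asserts without comment.
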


\begin{proof}
$(i)$ Let $x,y\in S$ and $r,t\in (0,1]$ be such that $x_{r}qf$ and $y_{t}qf.$
Then $x,y\in B$ and we have $xy\in B.$

If $r\wedge t\leq 0.5,$ then $f(xy)\geq 0.5\geq r\wedge t$ and hence $%
(xy)_{r\wedge t}\in f.$ If $r\wedge t>0.5,$ then $f(xy)+r\wedge t>0.5+0.5=1$
and so $(xy)_{r\wedge t}qf$. Therefore $(xy)_{r\wedge t}\in \vee qf$. Now
let $x,$ $y,$ $z\in S$ and $r,t\in (0,1]$ be such that $x_{r}qf$ and $%
z_{t}qf.$ Then $x,z\in B$ and we have $((xy)z)\in B$. If $r\wedge t\leq 0.5,$
then $f((xy)z)\geq 0.5\geq r\wedge t$ and hence $((xy)z)_{r\wedge t}\in f.$
If $r\wedge t>0.5,$ then $f((xy)z)+r\wedge t>0.5+0.5=1$ and so $%
((xy)z)_{r\wedge t}qf$. Therefore $((xy)z)_{r\wedge t}\in \vee qf$. There
fore $f$ is a $(q,\in \vee q)-$fuzzy bi-ideal of $S$.

$(ii)$ Let $x,y\in S$ and $r,t\in (0,1]$ be such that $x_{r}\in f$ and $%
y_{t}\in f.$ Then $x,y\in B$ and we have $xy\in B$. If $r\wedge t\leq 0.5,$
then $f(xy)\geq 0.5\geq r\wedge t$ and hence $(xy)_{r\wedge t}\in f.$ If $%
r\wedge t>0.5,$ then 
\begin{equation*}
f(xy)+r\wedge t>0.5+0.5=1\text{.}
\end{equation*}
Therefore $(xy)_{r\wedge t}qf$. Thus $(xy)_{r\wedge t}\in \vee qf$. Now let $%
x,$ $y,$ $z\in S$ and $r,t\in (0,1]$ be such that $x_{r}\in f$ and $z_{t}\in
f.$ Then $x,z\in B$ and we have $((xy)z)\in B$. If $r\wedge t\leq 0.5,$ then%
\begin{equation*}
f((xy)z)\geq 0.5\geq r\wedge t
\end{equation*}

and hence $((xy)z)_{r\wedge t}\in f.$ If $r\wedge t>0.5,$ then%
\begin{equation*}
f((xy)z)+r\wedge t>0.5+0.5=1
\end{equation*}

and so $((xy)z)_{r\wedge t}qf$. Therefore $((xy)z)_{r\wedge t}\in \vee qf$.
Thus $f$ is a $(\in ,\in \vee q)$-fuzzy bi-ideal of $S$.
\end{proof}

\begin{theorem}
\label{T 2.1}Let $S$ is an AG-groupoid and $f$ be a fuzzy subset of $S$,
then $f$ is fuzzy bi-ideal of $S$ if and only if the non-empty level subset $%
f_{t}=\{x\in S\mid f(x)\geq t\}$ of $S$ is a bi-ideal of $S$ for all $t\in
(0,1]$.
\end{theorem}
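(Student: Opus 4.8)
The plan is to prove the two implications separately by the standard level-set argument, unpacking the meaning of ``fuzzy bi-ideal'' via Definition~\ref{d 1} and the meaning of ``bi-ideal'' recalled in the Preliminaries, namely that $A$ is an AG-subgroupoid ($A^{2}\subseteq A$) satisfying $(AS)A\subseteq A$. (This statement is in fact a restatement of the theorem embedded in the second Definition environment, so the work below is essentially a transcription of that pattern.)

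For the forward direction, I would fix $t\in(0,1]$ with $f_{t}\neq\{\,\}$ and verify the two closure conditions. First, for $x,y\in f_{t}$ we have $f(x)\geq t$ and $f(y)\geq t$, so Definition~\ref{d 1}$(i)$ gives $f(xy)\geq f(x)\wedge f(y)\geq t$; hence $xy\in f_{t}$ and $f_{t}$ is an AG-subgroupoid. Second, for $x,z\in f_{t}$ and $y\in S$, Definition~\ref{d 1}$(ii)$ gives $f((xy)z)\geq f(x)\wedge f(z)\geq t$, so $(xy)z\in f_{t}$; thus $(f_{t}S)f_{t}\subseteq f_{t}$ and $f_{t}$ is a bi-ideal of $S$.

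For the converse, assume every non-empty level set $f_{t}$ ($t\in(0,1]$) is a bi-ideal. To check Definition~\ref{d 1}$(i)$, take $x,y\in S$ and put $t=f(x)\wedge f(y)$: if $t=0$ the inequality $f(xy)\geq 0$ is automatic, while if $t>0$ then $x,y\in f_{t}\neq\{\,\}$, so $xy\in f_{t}f_{t}\subseteq f_{t}$, i.e.\ $f(xy)\geq t=f(x)\wedge f(y)$. Likewise, for Definition~\ref{d 1}$(ii)$ take $x,y,z\in S$ and put $t=f(x)\wedge f(z)$: the case $t=0$ is trivial, and when $t>0$ we have $x,z\in f_{t}$, whence $(xy)z\in(f_{t}S)f_{t}\subseteq f_{t}$, giving $f((xy)z)\geq t=f(x)\wedge f(z)$. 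Hence $f$ is a fuzzy bi-ideal of $S$.

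There is no serious obstacle here; the one point needing a moment's care is the degenerate case $t=0$ in the converse, which is excluded from the hypothesis on level sets and so must be handled separately — but trivially, since $f$ is valued in $[0,1]$ and any inequality of the form $f(\cdot)\geq 0$ holds automatically.
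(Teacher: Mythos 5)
Your proof is correct and is exactly the standard level-set argument the paper has in mind (the paper itself only writes ``It is straight forward'' for Theorem~\ref{T 2.1}, so there is no written proof to diverge from). Your explicit handling of the degenerate case $t=f(x)\wedge f(y)=0$ in the converse is the one detail worth spelling out, and you do it correctly.
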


\begin{proof}
It is straight forward.
\end{proof}

\section{$(\in ,\in \vee q)$-fuzzy bi-ideals}

\begin{definition}
If we put $\alpha =\in $ and $\beta =\in \vee q$ in definitions \ref{def 1}
and \ref{def 2}, then we get the $(\in ,\in \vee q)$-fuzzy bi-ideals. The
equivalent definition are given in the following lemmas, whose proofs is
same as given in \cite{junn}.
\end{definition}

\begin{lemma}
Let $f$ be a fuzzy subset of AG-groupoid $S$, then $f$ is $(\in ,\in \vee q)$%
-fuzzy subgroupoid of $S$ if%
\begin{equation*}
f(xy)\geq \min \{f(x),f(y),0.5\} \text{ for all }x,y\in S.
\end{equation*}
\end{lemma}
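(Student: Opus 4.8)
The plan is to read this lemma as the usual characterization (an ``if and only if''): the local inequality $f(xy)\geq \min\{f(x),f(y),0.5\}$ is equivalent to the pointwise condition obtained from Definition \ref{def 1} by taking $\alpha=\in$ and $\beta=\in\vee q$. Both implications are short, following the template of \cite{junn}; the only point that needs care is the bookkeeping with the threshold $0.5$ in the quasi-coincidence clause.

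For the implication ``inequality $\Rightarrow$ $(\in,\in\vee q)$-fuzzy subgroupoid'', I would take arbitrary $x,y\in S$ and $t,r\in(0,1]$ with $x_{t}\in f$ and $y_{r}\in f$, i.e. $f(x)\geq t$ and $f(y)\geq r$. Then $f(xy)\geq \min\{f(x),f(y),0.5\}\geq \min\{t,r,0.5\}=(t\wedge r)\wedge 0.5$. I would then split into two cases. If $t\wedge r\leq 0.5$, then $f(xy)\geq t\wedge r$, so $(xy)_{t\wedge r}\in f$. If $t\wedge r>0.5$, then $f(xy)\geq 0.5$, hence $f(xy)+(t\wedge r)>0.5+0.5=1$, so $(xy)_{t\wedge r}qf$. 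In either case $(xy)_{t\wedge r}\in \vee q f$, which is precisely the required conclusion.

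For the converse, I would assume $f$ is an $(\in,\in\vee q)$-fuzzy subgroupoid and argue by contradiction: suppose there exist $x,y\in S$ with $f(xy)<\min\{f(x),f(y),0.5\}$, and choose $t$ with $f(xy)<t\leq \min\{f(x),f(y),0.5\}$. Then $f(x)\geq t$ and $f(y)\geq t$, so $x_{t}\in f$ and $y_{t}\in f$; by hypothesis $(xy)_{t\wedge t}=(xy)_{t}\in \vee q f$, that is, $f(xy)\geq t$ or $f(xy)+t>1$. The first alternative contradicts $f(xy)<t$. For the second, since $t\leq 0.5$ we get $f(xy)>1-t\geq 0.5\geq t$, again contradicting $f(xy)<t$. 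Hence no such pair exists.

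I do not anticipate any genuine obstacle; the argument is a direct translation of the semigroup case. The one spot where an error is easy to make is the case distinction on whether $t\wedge r$ exceeds $0.5$: the quasi-coincidence inequality $f(xy)+(t\wedge r)>1$ is only available in that regime, so the two cases must be kept separate rather than merged.
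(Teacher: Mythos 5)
Your proof is correct and is exactly the standard argument; the paper itself gives no proof for this lemma, simply deferring to the semigroup case in the cited reference, where the same two-case analysis on whether $t\wedge r$ exceeds $0.5$ (and the same contradiction argument for the converse) is used. Nothing further is needed.
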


\begin{lemma}
Let $f$ be a fuzzy subset of AG-groupoid $S$, then $f$ is $(\in ,\in \vee q)$%
-fuzzy left(respectively right) ideal of $S$ if%
\begin{equation*}
f(xy)\geq \min \{f(y),0.5\}(\text{respectively }f(xy)\geq \min \{f(x),0.5\})%
\text{ for all }x,y\in S).
\end{equation*}
\end{lemma}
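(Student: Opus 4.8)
The plan is to prove the biconditional behind this "if" (the lemma is meant to give an \emph{equivalent} description of the $(\in,\in\vee q)$ condition, so both directions are needed), treating only the left-ideal case and noting that the right-ideal case is obtained verbatim by replacing $f(y)$ with $f(x)$. By definition $f$ is an $(\in,\in\vee q)$-fuzzy left ideal of $S$ precisely when, for all $x,y\in S$ and all $t\in(0,1]$, the implication $y_t\in f\Longrightarrow (xy)_t\in\vee q\,f$ holds; so the whole task is to show this pointwise implication is equivalent to $f(xy)\ge\min\{f(y),0.5\}$ for all $x,y\in S$. The argument is the same template as for $(\in,\in\vee q)$-fuzzy ideals in \cite{junn}.

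First I would do the implication assuming $f(xy)\ge\min\{f(y),0.5\}$ for all $x,y$. Fix $x,y\in S$ and $t\in(0,1]$ with $y_t\in f$, i.e. $f(y)\ge t$. If $f(xy)\ge t$ then $(xy)_t\in f$ and we are done. Otherwise $f(xy)<t\le f(y)$; this forces $\min\{f(y),0.5\}=0.5$, since the alternative $\min\{f(y),0.5\}=f(y)$ would give $f(y)\le f(xy)<t\le f(y)$, absurd. Hence $f(xy)\ge 0.5$, and also $t>f(xy)\ge 0.5$, so $f(xy)+t>1$, i.e. $(xy)_t\,q\,f$. Either way $(xy)_t\in\vee q\,f$, so $f$ is an $(\in,\in\vee q)$-fuzzy left ideal.

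For the converse I would argue by contradiction: suppose $f(ab)<\min\{f(b),0.5\}$ for some $a,b\in S$, and pick $t$ with $f(ab)<t\le\min\{f(b),0.5\}$, which is legitimate since $f(ab)\ge 0$ gives $t\in(0,1]$. Then $f(b)\ge t$, so $b_t\in f$, and by hypothesis $(ab)_t\in\vee q\,f$. But $f(ab)<t$ excludes $(ab)_t\in f$, while $f(ab)+t<2t\le 1$ (using $t\le 0.5$) excludes $(ab)_t\,q\,f$ — a contradiction. Hence $f(xy)\ge\min\{f(y),0.5\}$ for all $x,y\in S$. I do not expect a genuine obstacle here: the proof is a short case analysis on whether $f(xy)$ reaches the threshold $t$ and on which term realizes the minimum $\min\{f(y),0.5\}$; the only care needed is keeping the strict/non-strict inequalities in the right places and checking that the auxiliary $t$ in the converse really lies in $(0,1]$.
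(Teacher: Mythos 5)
Your proof is correct, and it is essentially the standard argument: the paper itself omits the proof and simply defers to \cite{junn}, where exactly this case analysis (splitting on whether $f(xy)\ge t$ in the forward direction, and choosing a threshold $t$ with $f(ab)<t\le\min\{f(b),0.5\}$ for the converse) is carried out. You are also right to read the lemma as a biconditional despite the "if" in its statement, since the paper introduces these lemmas as "equivalent definitions," and your handling of the boundary cases ($t>0.5$ forcing quasi-coincidence, and $2t\le 1$ blocking it) is accurate.
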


\begin{lemma}
Let $f$ be a fuzzy subset of AG-groupoid $S$, then $f$ is $(\in ,\in \vee q)$%
-fuzzy bi-ideals of $S$ if and only if

$(i)$ $f(xy)\geq \min \{f(x),$ $f(y),0.5\},$ for all $x,y\in S,$

$(ii)$ $f((xy)z)\geq \min \{f(x),f(z),0.5\},$ for all $x,y,z\in S$.
\end{lemma}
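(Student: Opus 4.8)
The plan is to prove the biconditional in the last lemma by establishing the two conditions $(i)$ and $(ii)$ separately, treating $(i)$ as the ``fuzzy subgroupoid'' part and $(ii)$ as the ``bi-ideal'' part, and in each case proving both implications. Since an $(\in,\in\vee q)$-fuzzy bi-ideal is by definition a fuzzy subset satisfying Definition \ref{def 1} and Definition \ref{def 2} with $\alpha=\in$ and $\beta=\in\vee q$, the statement $f(xy)\geq\min\{f(x),f(y),0.5\}$ is exactly the content of the preceding lemma on $(\in,\in\vee q)$-fuzzy subgroupoids, so condition $(i)$ is immediate from that lemma. The real work is to show that the generalized-bi-ideal clause of Definition \ref{def 2}, namely $x_t\in f$ and $z_r\in f$ imply $((xy)z)_{t\wedge r}\in\vee q\,f$, is equivalent to the pointwise inequality $(ii)$.

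For the forward direction of $(ii)$, I would argue by contradiction in the standard way: suppose $f((xy)z)<\min\{f(x),f(z),0.5\}$ for some $x,y,z\in S$. Pick $t$ with $f((xy)z)<t\leq\min\{f(x),f(z),0.5\}$. Then $x_t\in f$ and $z_t\in f$, so by hypothesis $((xy)z)_t\in\vee q\,f$, i.e.\ either $f((xy)z)\geq t$ or $f((xy)z)+t>1$. The first is impossible by choice of $t$; the second gives $f((xy)z)>1-t\geq 1-0.5=0.5\geq t$ (using $t\leq 0.5$), again contradicting $f((xy)z)<t$. Hence the inequality $(ii)$ holds. For the converse, assume $(ii)$ and let $x_t\in f$, $z_r\in f$; then $f((xy)z)\geq\min\{f(x),f(z),0.5\}\geq\min\{t,r,0.5\}=\min\{t\wedge r,0.5\}$. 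If $t\wedge r\leq 0.5$ this says $f((xy)z)\geq t\wedge r$, so $((xy)z)_{t\wedge r}\in f$; if $t\wedge r>0.5$ then $f((xy)z)\geq 0.5$, whence $f((xy)z)+(t\wedge r)>0.5+0.5=1$ and $((xy)z)_{t\wedge r}q f$. Either way $((xy)z)_{t\wedge r}\in\vee q\,f$, which is Definition \ref{def 2}.

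The main obstacle, such as it is, is bookkeeping: one must be careful with the $0.5$ threshold case-splits and with the fact that $\min\{t,r,0.5\}$ can be rewritten as $\min\{t\wedge r,0.5\}$, and one must remember that the converse direction uses the pointwise inequality at arbitrary points $x,y,z$, not just at $x_{f(x)}$. There is no genuinely hard step here; the argument is the AG-groupoid analogue of the corresponding semigroup result, and the left invertive law plays no role because the clause $((xy)z)$ already carries the element $y$ in the middle and no rearrangement is needed. I would present condition $(i)$ in a single sentence citing the subgroupoid lemma, then give the two-paragraph contradiction/direct argument for $(ii)$ as above, and conclude that $f$ is an $(\in,\in\vee q)$-fuzzy bi-ideal precisely when both inequalities hold.
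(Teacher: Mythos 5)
Your argument is correct and is exactly the standard equivalence proof (contradiction with a threshold $t$ strictly between $f((xy)z)$ and $\min\{f(x),f(z),0.5\}$ for one direction, and the case split on $t\wedge r\lessgtr 0.5$ for the other) that the paper itself does not write out but defers to the cited semigroup reference. Your observation that the left invertive law plays no role here is also accurate, so nothing further is needed.
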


\begin{remark}
Every fuzzy bi-ideal if an AG-groupoid $S$ is $(\in ,\in \vee q)$-fuzzy
bi-ideals of $S$ but the converse is not true, in general.
\end{remark}

\begin{example}
Let $S=\left \{ a,b,c,d,e\right \} $ be an AG-groupoid with left identity $d$
with the following multiplicative table,

\begin{equation*}
\begin{tabular}{l|lllll}
$.$ & $a$ & $b$ & $c$ & $d$ & $e$ \\ \hline
$a$ & $a$ & $a$ & $a$ & $a$ & $a$ \\ 
$b$ & $a$ & $b$ & $b$ & $b$ & $b$ \\ 
$c$ & $a$ & $b$ & $d$ & $e$ & $c$ \\ 
$d$ & $a$ & $b$ & $c$ & $d$ & $e$ \\ 
$e$ & $a$ & $b$ & $e$ & $c$ & $d$%
\end{tabular}%
\end{equation*}

Clearly $\{a\}$ and $\{a,b\}$ are bi-ideals of $S$. Let a fuzzy subset $f$
be define as 
\begin{equation*}
f(a)=0.8,\text{ }f(b)=0.7,\text{ }f(c)=f(d)=f(e)=0.3\text{.}
\end{equation*}

Then 
\begin{equation*}
f_{t}=\left \{ 
\begin{array}{c}
S\text{ \  \  \  \  \  \  \  \  \ if }t\in (0,0.3] \\ 
\{a,b\} \text{ \  \  \  \  \ if }t\in (0.3,0.7] \\ 
\{a\} \text{ \  \  \  \  \  \  \  \ if }t\in (0.7,0.8] \\ 
\{ \} \text{ \  \  \  \  \  \  \  \ if }t\in (0.8,1]\text{.}%
\end{array}%
\right.
\end{equation*}

Clearly $f_{t}$ is bi-ideal for all $t\in (0,1]$. So by using theorem \ref{T
2.1}, $f$ is fuzzy bi-ideal of $S$ and hence $(\in ,\in \vee q)$-fuzzy
bi-ideals of $S$.
\end{example}

\begin{definition}
Let $f$ and $g$ be a fuzzy subsets of AG-groupoid $S$, then the $0.5$%
-product of $f$ and $g$ is defined by%
\begin{equation*}
(f\circ _{0.5}g)(a)=\left \{ 
\begin{array}{c}
\dbigvee \limits_{a=bc}\min \{f(a),\text{ }f(b),\text{ }0.5\} \  \text{if
there exists }b,c\in S\text{ such that }a=bc, \\ 
0\text{ \  \  \  \  \  \  \  \  \  \  \  \  \  \  \  \  \  \  \  \  \  \  \  \  \  \  \  \  \  \  \  \  \  \
\  \  \  \  \  \  \  \  \  \  \  \  \  \  \  \  \  \  \  \  \  \ otherwise.}%
\end{array}%
\right.
\end{equation*}

The $0.5$ intersection of $f$ and $g$ is defined by 
\begin{equation*}
(f\cap _{0.5}g)(a)=\{f(a)\wedge f(b)\wedge 0.5\} \text{ for all }a\in S\text{%
.}
\end{equation*}
\end{definition}

\begin{proposition}
\label{P 4.2}Let $S$ be an AG-groupoid and $f,g,h,k$ are fuzzy subsets of $S$
such that $f\subseteq h$ and $g\subseteq k$, then $f\circ _{0.5}g\subseteq
h\circ _{0.5}k$.
\end{proposition}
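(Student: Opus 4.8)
The plan is to prove the inclusion pointwise: fix an arbitrary $a\in S$ and show $(f\circ _{0.5}g)(a)\leq (h\circ _{0.5}k)(a)$, which is exactly what $f\circ _{0.5}g\subseteq h\circ _{0.5}k$ means.

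First I would dispose of the degenerate case. If there are no $b,c\in S$ with $a=bc$, then by the definition of the $0.5$-product $(f\circ _{0.5}g)(a)=0$, and since every fuzzy subset takes values in $[0,1]$ we trivially have $0\leq (h\circ _{0.5}k)(a)$; so the desired inequality holds at such an $a$.

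Now suppose $a$ admits at least one factorization $a=bc$. The key observation is that the index set over which the join is formed, namely $\{(b,c)\in S\times S : a=bc\}$, depends only on $a$ and the multiplication of $S$ and not on the fuzzy subsets involved, so it is literally the same index set for $f\circ _{0.5}g$ and for $h\circ _{0.5}k$. For each such pair $(b,c)$ the hypotheses $f\subseteq h$ and $g\subseteq k$ give $f(b)\leq h(b)$ and $g(c)\leq k(c)$; since $\min \{x,y,0.5\}$ is monotone in each of its arguments, this yields $\min \{f(b),g(c),0.5\}\leq \min \{h(b),k(c),0.5\}$. Taking the supremum of both sides over all factorizations $a=bc$ (the supremum of a family dominated pointwise by another is dominated by the supremum of the latter) gives
\begin{equation*}
(f\circ _{0.5}g)(a)=\dbigvee \limits_{a=bc}\min \{f(b),g(c),0.5\}\leq \dbigvee \limits_{a=bc}\min \{h(b),k(c),0.5\}=(h\circ _{0.5}k)(a).
\end{equation*}

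Since $a\in S$ was arbitrary, $f\circ _{0.5}g\subseteq h\circ _{0.5}k$. I do not expect any genuine obstacle here: the statement is just monotonicity of $\min $ together with monotonicity of suprema, and the only point deserving a word of care is the remark that the set of factorizations of $a$ is independent of the chosen fuzzy subsets, so both joins range over a common index set.
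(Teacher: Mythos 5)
Your proof is correct and is exactly the standard argument one would expect: the paper itself states this proposition without any proof, so your pointwise verification via monotonicity of $\min$ and of suprema over the common index set of factorizations of $a$ supplies precisely what the authors left implicit. The only remark worth adding is that the paper's displayed definition of $(f\circ _{0.5}g)(a)$ contains a typographical slip (it writes $\min \{f(a),f(b),0.5\}$ where $\min \{f(b),g(c),0.5\}$ is clearly intended), and you have correctly read the intended definition.
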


\begin{proposition}
\label{P 4.3}Let $f$ and $g$ are $(\in ,\in \vee q)$-fuzzy bi-ideals of
AG-groupoid $S$, then $f\cap _{0.5}g$ is also $(\in ,\in \vee q)$-fuzzy
bi-ideal of $S.$
\end{proposition}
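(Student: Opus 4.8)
The plan is to reduce everything to the characterization of $(\in,\in\vee q)$-fuzzy bi-ideals established in the Lemma above: a fuzzy subset $h$ of $S$ is an $(\in,\in\vee q)$-fuzzy bi-ideal if and only if $h(xy)\ge\min\{h(x),h(y),0.5\}$ for all $x,y\in S$ and $h((xy)z)\ge\min\{h(x),h(z),0.5\}$ for all $x,y,z\in S$. So it suffices to verify these two inequalities for $h=f\cap_{0.5}g$, where (reading the displayed formula as intended) $h(a)=f(a)\wedge g(a)\wedge 0.5$ for all $a\in S$.

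First I would check the subgroupoid condition. Fix $x,y\in S$. Since $f$ and $g$ are themselves $(\in,\in\vee q)$-fuzzy bi-ideals, $f(xy)\ge\min\{f(x),f(y),0.5\}$ and $g(xy)\ge\min\{g(x),g(y),0.5\}$. Taking the $\wedge$ of these two inequalities and intersecting with $0.5$, and using associativity and commutativity of $\wedge=\min$ on $[0,1]$, gives $h(xy)=f(xy)\wedge g(xy)\wedge 0.5\ge\min\{f(x),f(y),g(x),g(y),0.5\}$. On the other hand, expanding the right-hand side of the desired inequality directly, $\min\{h(x),h(y),0.5\}=\min\{f(x),g(x),0.5,f(y),g(y),0.5,0.5\}=\min\{f(x),f(y),g(x),g(y),0.5\}$, which is the very same quantity. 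Hence $h(xy)\ge\min\{h(x),h(y),0.5\}$.

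Next, the bi-ideal condition is handled identically. For $x,y,z\in S$, from $f((xy)z)\ge\min\{f(x),f(z),0.5\}$ and $g((xy)z)\ge\min\{g(x),g(z),0.5\}$ one obtains $h((xy)z)=f((xy)z)\wedge g((xy)z)\wedge 0.5\ge\min\{f(x),f(z),g(x),g(z),0.5\}=\min\{h(x),h(z),0.5\}$. Both conditions of the Lemma therefore hold for $h$, so $f\cap_{0.5}g$ is an $(\in,\in\vee q)$-fuzzy bi-ideal of $S$.

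Since the whole argument is just elementary manipulation of nested minima in the unit interval, there is no genuine obstacle; the only thing demanding a little care is bookkeeping the iterated $\wedge$'s correctly (and being aware that the printed definition of the $0.5$-intersection should read $f(a)\wedge g(a)\wedge 0.5$). If one preferred, the same result could be proved "pointwise" via fuzzy points and the raw $(\in,\in\vee q)$ definition, but going through the Lemma is shorter and cleaner.
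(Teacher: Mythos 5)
Your proof is correct: the paper states Proposition \ref{P 4.3} without any proof at all, so there is nothing to compare against, but your route through the min-characterization lemma (verifying $h(xy)\geq \min\{h(x),h(y),0.5\}$ and $h((xy)z)\geq \min\{h(x),h(z),0.5\}$ for $h=f\cap_{0.5}g$ by expanding both sides to the common value $f(x)\wedge f(y)\wedge g(x)\wedge g(y)\wedge 0.5$, resp.\ the analogue with $z$) is the standard and evidently intended argument. You were also right to read the paper's displayed definition of $(f\cap_{0.5}g)(a)$ as $f(a)\wedge g(a)\wedge 0.5$; the printed $f(a)\wedge f(b)\wedge 0.5$ is a typo. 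The bookkeeping of the nested minima is accurate throughout.
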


\begin{lemma}
\label{L 4.2}Let $S$ be an AG-groupoid. Then every one-sided $(\in ,\in \vee
q)$-fuzzy ideal is an $(\in ,\in \vee q)$-fuzzy bi-ideal of $S.$
\end{lemma}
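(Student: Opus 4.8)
The plan is to reduce everything to the pointwise characterizations established in the preceding lemmas and then use nothing more than the monotonicity of $\min$. Recall that $f$ is an $(\in ,\in \vee q)$-fuzzy bi-ideal of $S$ precisely when (i) $f(xy)\geq \min \{f(x),f(y),0.5\}$ for all $x,y\in S$ and (ii) $f((xy)z)\geq \min \{f(x),f(z),0.5\}$ for all $x,y,z\in S$. So I would take a one-sided $(\in ,\in \vee q)$-fuzzy ideal $f$ and verify (i) and (ii), splitting the argument according to whether $f$ is a left ideal or a right ideal.

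First, suppose $f$ is an $(\in ,\in \vee q)$-fuzzy \emph{left} ideal, so that $f(xy)\geq \min \{f(y),0.5\}$ for all $x,y\in S$. Then (i) is immediate, since $\min \{f(y),0.5\}\geq \min \{f(x),f(y),0.5\}$. For (ii) I would apply the left-ideal inequality with the element $xy$ in the role of the left factor and $z$ in the role of $y$, obtaining $f((xy)z)\geq \min \{f(z),0.5\}\geq \min \{f(x),f(z),0.5\}$.

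Next, suppose $f$ is an $(\in ,\in \vee q)$-fuzzy \emph{right} ideal, so that $f(xy)\geq \min \{f(x),0.5\}$ for all $x,y\in S$. Again (i) follows at once. For (ii) I would chain two applications of the right-ideal inequality with the idempotence of $t\mapsto \min \{t,0.5\}$: $f((xy)z)\geq \min \{f(xy),0.5\}\geq \min \{\min \{f(x),0.5\},0.5\}=\min \{f(x),0.5\}\geq \min \{f(x),f(z),0.5\}$. In both cases (i) is exactly the $(\in ,\in \vee q)$-fuzzy subgroupoid condition, so together with (ii) this shows $f$ is an $(\in ,\in \vee q)$-fuzzy bi-ideal of $S$.

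There is no serious obstacle here: the content of the lemma is entirely absorbed into the pointwise characterizations of the earlier lemmas, and the remainder is bookkeeping with $\min$. The one point deserving a moment's care is the right-ideal case for (ii), where one must insert the step $\min \{f(xy),0.5\}\geq \min \{f(x),0.5\}$ before collapsing the nested minimum; this is precisely where the threshold $0.5$ behaves conveniently, since $\min \{\min \{t,0.5\},0.5\}=\min \{t,0.5\}$. Note also that the argument never invokes the AG-groupoid axioms (the left invertive, medial or paramedial laws), so no structural identities are needed.
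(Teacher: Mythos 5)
Your proposal is correct and follows essentially the same route as the paper: both reduce to the pointwise characterizations and verify the subgroupoid condition directly, handle the left-ideal case by viewing $xy$ as the left factor in $(xy)z$, and handle the right-ideal case by chaining $f((xy)z)\geq \min\{f(xy),0.5\}\geq \min\{f(x),0.5\}$. No further comment is needed.
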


\begin{proof}
Let $f$ be an $(\in ,\in \vee q)$-fuzzy left ideal of $S$ and $x,y\in S.$
Then $f(xy)\geq \min \{f(y),$ $0.5\} \geq \min \{f(x),$ $f(y),$ $0.5\}.$
Therefore $f$ is an $(\in ,\in \vee q)$-fuzzy subgroupoid of $S.$ Let $%
x,y,z\in S.$ Then $f((xy)z)\geq \min \{f(z),$ $0.5\} \geq \min \{f(x),$ $%
f(z), $ $0.5\}.$ Thus $f$ is an $(\in ,\in \vee q)$-fuzzy bi-ideal of $S.$
Now let $f$ be an $(\in ,\in \vee q)$-fuzzy right ideal of $S$ and $x,y\in
S. $ Then $f(xy)\geq \min \{f(x),$ $0.5\} \geq \min \{f(x),$ $f(y),$ $0.5\}.$
Hence $f$ is an $(\in ,\in \vee q)$-fuzzy subgroupoid of $S.$ Let $x,y,z\in
S.$ Then $f((xy)z)\geq \min \{f(xy),$ $0.5\} \geq \{f(x),$ $0.5\} \geq \min
\{f(x),$ $f(z),$ $0.5\}.$

Hence $f$ is an $(\in ,\in \vee q)$-fuzzy bi-ideal of $S.$
\end{proof}

\begin{definition}
An $(\in ,\in \vee q)$-fuzzy bi-ideal $f$ of $S$ is called idempotent if $%
f\circ _{0.5}f=f.$
\end{definition}

\begin{proposition}
\label{P 4.4}Let $S$ be an AG-groupoid and $f$ is an $(\in ,\in \vee q)$-
fuzzy bi-ideal of $S$, then $f\circ _{0.5}f\subseteq f.$
\end{proposition}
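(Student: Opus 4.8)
The plan is to show $(f\circ_{0.5}f)(a)\le f(a)$ for every $a\in S$, working pointwise. Fix $a\in S$. If there is no factorization $a=bc$ with $b,c\in S$, then $(f\circ_{0.5}f)(a)=0\le f(a)$ and we are done. Otherwise, by definition $(f\circ_{0.5}f)(a)=\bigvee_{a=bc}\min\{f(b),f(c),0.5\}$, so it suffices to bound a single summand: for every factorization $a=bc$ we must show $\min\{f(b),f(c),0.5\}\le f(a)$, and then taking the supremum over all factorizations finishes the argument.

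The key step is to produce, from $a=bc$, an expression for $a$ of the form $((xy)z)$ with $x$ and $z$ controlled by $b$ and $c$, so that the bi-ideal condition $f((xy)z)\ge\min\{f(x),f(z),0.5\}$ applies. Since $f$ is only an $(\in,\in\vee q)$-fuzzy bi-ideal, the usable inequalities are the two from the equivalent characterization: $f(uv)\ge\min\{f(u),f(v),0.5\}$ and $f((uv)w)\ge\min\{f(u),f(w),0.5\}$. The natural move is to use the AG-groupoid identities to rewrite $bc$. For instance, writing $b=b$ and using a left identity (if available) one would look for a representation like $bc = ((b s)c)$ for a suitable $s$, or exploit the medial/paramedial laws (2)--(3) to reshuffle; alternatively, one can first apply the subgroupoid inequality to a sub-product and then the bi-ideal inequality. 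Concretely, I would try to massage $a = bc$ into the shape $a=((b t) c)$ for some $t\in S$ (using identities such as $(1)$, or $a(bc)=b(ac)$ from $(4)$ when a left identity is present), which then gives
\begin{equation*}
f(a)=f((bt)c)\ge\min\{f(b),f(c),0.5\}
\end{equation*}
by the generalized bi-ideal clause. Taking the join over all factorizations $a=bc$ yields $(f\circ_{0.5}f)(a)\le f(a)$, i.e.\ $f\circ_{0.5}f\subseteq f$.

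The main obstacle I anticipate is the rewriting step: an arbitrary factorization $a=bc$ need not obviously present $a$ in bi-ideal form $((xy)z)$ with the first and last entries being exactly $b$ and $c$. If a left identity is not assumed for $S$ in this proposition, only the left invertive law $(1)$ and the medial law $(2)$ are available, and it may be necessary to split into cases or to apply the subgroupoid inequality once before invoking the bi-ideal inequality (e.g.\ bound $f(bc)$ by passing through $f((b\,b)c)$ or a similarly padded product and absorbing the extra factor via $\min\{\cdot,0.5\}$). If even that fails for a general element, one falls back on the standard trick for AG-groupoids: since $S$ need not have an identity, one may have to restrict attention to elements $a$ that actually admit a factorization and handle the $0.5$ truncation carefully so that the bound $\min\{f(b),f(c),0.5\}$ — rather than $\min\{f(b),f(c)\}$ — is what is being dominated, which is precisely why the statement uses $\circ_{0.5}$ and not the ordinary product.
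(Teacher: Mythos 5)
Your setup is right (pointwise argument, case split on whether $a$ factors, bounding each summand $\min\{f(b),f(c),0.5\}$ by $f(a)$), but you then head down an unnecessary and uncompleted path. You list the two usable inequalities, namely $f(uv)\geq\min\{f(u),f(v),0.5\}$ and $f((uv)w)\geq\min\{f(u),f(w),0.5\}$, and then try to force the \emph{second} one to apply by rewriting $a=bc$ into the shape $((bt)c)$. That rewriting is never actually carried out, and you yourself flag it as the main obstacle. But no such rewriting is needed: an $(\in,\in\vee q)$-fuzzy bi-ideal is in particular an $(\in,\in\vee q)$-fuzzy subgroupoid, so the \emph{first} inequality applies directly to the factorization $a=bc$ and gives
\begin{equation*}
\min\{f(b),f(c),0.5\}\leq f(bc)=f(a)
\end{equation*}
for every such factorization. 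Taking the supremum over all factorizations yields $(f\circ_{0.5}f)(a)\leq f(a)$ immediately. This is exactly the paper's proof. The generalized bi-ideal clause is irrelevant here because $f\circ_{0.5}f$ only ever sees two-factor decompositions; it would only enter for a triple product such as $(f\circ_{0.5}1)\circ_{0.5}f$, which is the subject of the next proposition, not this one. So the gap is concrete: you identified the correct tool in passing but did not use it, and the step you propose in its place is both unjustified and superfluous. Note also that no left identity is assumed in this proposition, so any argument leaning on identity $(4)$ or on padding with $e$ would be illegitimate anyway.
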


\begin{proof}
Let $f$ be an $(\in ,\in \vee q)$-fuzzy bi-ideals of $S$ and let $a\in S$.
If $a\neq bc$ for some $b,c\in S$, then $(f\circ _{0.5}f)(a)=0\leq f(a)\,$,
and if $a=bc$ for some $b,c\in S$, then 
\begin{equation*}
(f\circ _{0.5}f)(a)=\dbigvee \limits_{a=bc}\min \{f(b),f(c),0.5\} \leq
\dbigvee \limits_{a=bc}f(bc)\leq \dbigvee \limits_{a=bc}f(a)=f(a).
\end{equation*}

Hence $f\circ _{0.5}f\subseteq f.$
\end{proof}

\begin{definition}
For an AG-groupoid the fuzzy subset $0$(respectively $1$) is defined as
follows: $0(x)=0$($1(x)=1$) for all $x\in S.$
\end{definition}

\begin{proposition}
For any fuzzy subset $f$ of an AG-groupoid we have $f\subseteq 1.$
\end{proposition}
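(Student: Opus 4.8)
The plan is to unwind the two definitions involved and observe that the inclusion is immediate. Recall that a fuzzy subset $f$ of $S$ is by definition a function $f:S\longrightarrow[0,1]$, and that the fuzzy subset $1$ is the constant function $1(x)=1$ for all $x\in S$. Recall also that for fuzzy subsets $g,h$ of $S$ the relation $g\subseteq h$ means $g(x)\le h(x)$ for every $x\in S$.

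With these recollections in hand, the proof is a single line: fix an arbitrary $x\in S$; since $f(x)\in[0,1]$ we have $f(x)\le 1=1(x)$; as $x$ was arbitrary, $f\subseteq 1$ by the definition of inclusion of fuzzy subsets.

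There is no real obstacle here — the statement is a direct consequence of the codomain of $f$ being the unit interval, and no property of the AG-groupoid structure is used. The only thing to be careful about is to phrase it in terms of the paper's own notation for the constant fuzzy subset $1$ and for $\subseteq$, so that the argument reads as an application of the definitions just introduced rather than as an appeal to anything external.
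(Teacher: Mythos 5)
Your proof is correct and is exactly the argument the paper intends; the paper simply declares the result ``straight forward'' and omits the details, which your one-line observation that $f(x)\in[0,1]$ forces $f(x)\leq 1=1(x)$ supplies.
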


\begin{proof}
It is straight forward.
\end{proof}

\begin{lemma}
Let $S$ be an AG-groupoid and $f,g$ are fuzzy subsets of $S$. Then 
\begin{equation*}
f\circ _{0.5}g\subseteq 1\circ _{0.5}g(\text{respectively }f\circ
_{0.5}g\subseteq f\circ _{0.5}1).
\end{equation*}
\end{lemma}

\begin{proof}
It follows from proposition.\ref{P 4.2}.
\end{proof}

\begin{proposition}
\label{P 4.5}Let $S$ be an AG-groupoid and $f$ is an $(\in ,\in \vee q)$-
fuzzy bi-ideal of $S$, then $(f\circ _{0.5}1)\circ _{0.5}f\subseteq f.$
\end{proposition}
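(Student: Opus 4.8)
The plan is to compute $\bigl((f\circ_{0.5}1)\circ_{0.5}f\bigr)(a)$ pointwise and bound it above by $f(a)$, the only substantive ingredient being the equivalent description of an $(\in,\in\vee q)$-fuzzy bi-ideal, namely $f((xy)z)\geq\min\{f(x),f(z),0.5\}$ for all $x,y,z\in S$. Reading the $0.5$-product in the usual way, $(g\circ_{0.5}h)(a)=\bigvee_{a=bc}\min\{g(b),h(c),0.5\}$, I would first fix $a\in S$ and dispose of the trivial case: if $a$ has no factorization $a=bc$ in $S$, then $\bigl((f\circ_{0.5}1)\circ_{0.5}f\bigr)(a)=0\leq f(a)$.

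So suppose $a=bc$ for some $b,c\in S$. Then
\[
\bigl((f\circ_{0.5}1)\circ_{0.5}f\bigr)(a)=\bigvee_{a=bc}\min\bigl\{(f\circ_{0.5}1)(b),\,f(c),\,0.5\bigr\}.
\]
For a fixed such $b,c$, either $b$ is not a product (then $(f\circ_{0.5}1)(b)=0$ and the corresponding term is $0\leq f(a)$), or $b=uv$ for some $u,v\in S$, in which case, since $1(v)=1$,
\[
(f\circ_{0.5}1)(b)=\bigvee_{b=uv}\min\{f(u),\,1(v),\,0.5\}=\bigvee_{b=uv}\min\{f(u),\,0.5\}.
\]
Using the elementary identity $\min\{t,\bigvee_i s_i\}=\bigvee_i\min\{t,s_i\}$, the two nested suprema merge and one obtains
\[
\bigl((f\circ_{0.5}1)\circ_{0.5}f\bigr)(a)=\bigvee_{a=(uv)c}\min\{f(u),f(c),0.5\},
\]
the join being taken over all decompositions $a=(uv)c$ with $u,v,c\in S$.

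The last step is where the hypothesis is used: for every such decomposition, the characterization of $(\in,\in\vee q)$-fuzzy bi-ideals (specifically the inequality $f((xy)z)\geq\min\{f(x),f(z),0.5\}$) gives
\[
f(a)=f\bigl((uv)c\bigr)\geq\min\{f(u),f(c),0.5\}.
\]
Taking the supremum over all such decompositions yields $\bigl((f\circ_{0.5}1)\circ_{0.5}f\bigr)(a)\leq f(a)$; since $a\in S$ was arbitrary, $(f\circ_{0.5}1)\circ_{0.5}f\subseteq f$.

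I do not anticipate a genuine obstacle here: the argument is essentially bookkeeping with the definition of the $0.5$-product. The one point to handle with care is the nested case analysis (first on whether $a$ factors, then on whether its left factor $b$ factors) together with the distributivity of $\min$ over $\bigvee$, which is what collapses the double supremum into a single supremum over factorizations of the shape $a=(uv)c$. Once that reduction is in place, a single invocation of $f((xy)z)\geq\min\{f(x),f(z),0.5\}$ closes the argument.
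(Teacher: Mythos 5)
Your argument is correct and follows essentially the same route as the paper's proof: expand the nested $0.5$-products, collapse the double supremum over decompositions $a=(uv)c$, and apply the bi-ideal inequality $f((uv)c)\geq\min\{f(u),f(c),0.5\}$ termwise before taking the supremum. The only difference is that you explicitly handle the subcase where the left factor $b$ does not itself factor, which the paper glosses over; this is a harmless refinement, not a change of method.
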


\begin{proof}
Let $f$ be an $(\in ,\in \vee q)$-fuzzy bi-ideal of $S$ and let $a\in S$. If 
$a\neq bc$ for some $b,c\in S$, then $((f\circ _{0.5}1)\circ
_{0.5}f)(a)=0\leq f(a)$, and if $a=bc$ for some $b,c\in S$, then 
\begin{eqnarray*}
((f\circ _{0.5}1)\circ _{0.5}f)(a) &=&\dbigvee \limits_{a=bc}\min \{(f\circ
_{0.5}1)(b),f(c),0.5\} \\
&=&\dbigvee \limits_{a=bc}\min \{ \dbigvee \limits_{b=tr}\min
\{f(t),1(r),0.5\},~f(c),0.5\} \\
&=&\dbigvee \limits_{a=bc}\dbigvee \limits_{b=tr}\min \{f(t),f(c),1,0.5\} \\
&=&\dbigvee \limits_{a=bc}\dbigvee \limits_{b=tr}\min \{f(t),f(c),0.5\} 
\text{.}
\end{eqnarray*}

Since $a=bc=(tr)c$, and $f$ is an $(\in ,\in \vee q)$- fuzzy bi-ideal of $S$%
, so we have

$f((tr)c)\geq \min \{f(t),f(c),0.5\}$. Thus 
\begin{eqnarray*}
((f\circ _{0.5}1)\circ _{0.5}f)(a) &=&\dbigvee \limits_{a=bc}\dbigvee
\limits_{b=tr}\min \{f(t),f(c),0.5\} \\
&\leq &\dbigvee \limits_{a=(tr)c}f((tr)c)=f(a)\text{,}
\end{eqnarray*}

which implies that $(f\circ _{0.5}1)\circ _{0.5}f\subseteq f.$
\end{proof}

\begin{definition}
$(i)$ An AG-groupoid $S$ is called regular, if for each $a\in S$ there
exists $x\in S,~$such that $a=(ax)a$.

$(ii)$ An AG-groupoid $S$ is called intra-regular, if for each $a\in S$
there exists $x,~y\in S,$ such that $a=(xa^{2})y$.
\end{definition}

\begin{theorem}
\label{T 4.3}For a regular AG-groupoid $S$ with left identity $e$, we have $%
(f\circ _{0.5}1)\circ _{0.5}f=f$ for every $(\in ,\in \vee q)$-fuzzy
bi-ideal $f$ of $S$.
\end{theorem}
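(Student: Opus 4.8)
The plan is to prove the two inclusions separately. The inclusion $(f\circ_{0.5}1)\circ_{0.5}f\subseteq f$ is already available: it is exactly Proposition \ref{P 4.5}, which holds for any AG-groupoid and any $(\in,\in\vee q)$-fuzzy bi-ideal $f$. So the real work is the reverse inclusion $f\subseteq (f\circ_{0.5}1)\circ_{0.5}f$, and this is where regularity and the left identity are used.

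For the reverse inclusion, fix $a\in S$. By regularity there is $x\in S$ with $a=(ax)a$. The idea is to exhibit $a$ as a product witnessing a large value of $((f\circ_{0.5}1)\circ_{0.5}f)(a)$. Writing $a=(ax)a$, we get from the definition of the $0.5$-product that
\begin{equation*}
((f\circ_{0.5}1)\circ_{0.5}f)(a)\;\geq\;\min\{(f\circ_{0.5}1)(ax),\,f(a),\,0.5\}.
\end{equation*}
Next I would bound $(f\circ_{0.5}1)(ax)$ from below by writing $ax$ as a product with first factor $a$: since $(f\circ_{0.5}1)(ax)=\bigvee_{ax=uv}\min\{f(u),1(v),0.5\}\geq \min\{f(a),1(x),0.5\}=\min\{f(a),0.5\}$. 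Substituting back,
\begin{equation*}
((f\circ_{0.5}1)\circ_{0.5}f)(a)\;\geq\;\min\{\min\{f(a),0.5\},\,f(a),\,0.5\}\;=\;\min\{f(a),0.5\}.
\end{equation*}
This gives $f\subseteq(f\circ_{0.5}1)\circ_{0.5}f$ only up to the $0.5$ truncation, which is not quite enough; the honest statement $f(a)\leq((f\circ_{0.5}1)\circ_{0.5}f)(a)$ needs more care when $f(a)>0.5$.

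The way around this is to use the left invertive law, the medial/paramedial laws and equation $(4)$ to re-express $a=(ax)a$ in a form where the outer factors are again $a$ but the product can be reassociated so that the $0.5$-product definition peels off $f(a)$ without loss; concretely one rewrites, using $a=(ax)a$ repeatedly, an expression of the shape $a=((a t)\,s)\,a$ with $t,s\in S$ obtained from $x$ and $a$ via $(1)$–$(4)$, and then
\begin{equation*}
((f\circ_{0.5}1)\circ_{0.5}f)(a)\;\geq\;\min\Big\{\bigvee_{at\text{-decompositions}}\min\{f(a),1,0.5\},\,f(a),\,0.5\Big\},
\end{equation*}
at which point one invokes that $f$ is an $(\in,\in\vee q)$-fuzzy bi-ideal — so $f((a t)s\cdot a)\geq\min\{f(a),f(a),0.5\}$ — in the opposite direction to close the gap. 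I expect the main obstacle to be precisely this $0.5$-truncation bookkeeping: one must arrange the double decomposition of $a$ so that every $\min$ that appears already contains $f(a)$ and $0.5$ but nothing smaller, and verify that the AG-groupoid identities genuinely deliver such a decomposition (this is where the left identity $e$ enters, via the paramedial law $(3)$ and law $(4)$). Once the decomposition is set up correctly, combining it with Proposition \ref{P 4.5} for the easy direction finishes the proof.
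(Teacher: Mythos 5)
Your first inclusion and the first half of your computation match the paper's argument. The paper also quotes Proposition \ref{P 4.5} for $(f\circ _{0.5}1)\circ _{0.5}f\subseteq f$, and for the reverse direction it likewise starts from the regularity decomposition $a=(ax)a$; the only real difference is cosmetic: the paper first rewrites $a=(ax)a=(a((aa)x))a$ using $(1)$ and $(4)$ before peeling off factors, whereas you decompose $ax=a\cdot x$ directly. Both routes land on exactly the same lower bound
\begin{equation*}
((f\circ _{0.5}1)\circ _{0.5}f)(a)\geq \min \{f(a),0.5\}.
\end{equation*}

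The genuine gap is in your second half, and it cannot be closed by any amount of reassociation. By the definition of the $0.5$-product, every term of the supremum defining $((f\circ _{0.5}1)\circ _{0.5}f)(a)$ carries a $\wedge \,0.5$, so $((f\circ _{0.5}1)\circ _{0.5}f)(a)\leq 0.5$ for every $a$. Hence if $f(a)>0.5$ the asserted equality $((f\circ _{0.5}1)\circ _{0.5}f)(a)=f(a)$ is simply unattainable, no matter which decomposition of $a$ the laws $(1)$--$(4)$ and the left identity supply; your plan of ``arranging every $\min$ to contain only $f(a)$ and $0.5$'' can never produce a value exceeding $0.5$. What is actually provable by this argument is $(f\circ _{0.5}1)\circ _{0.5}f=f\wedge 0.5$, equivalently equality with $f$ under the extra hypothesis that $f\leq 0.5$ pointwise. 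You should know that the paper's own proof stops at precisely the bound $\min \{f(a),0.5\}$ and then writes $\min \{f(a),0.5\}=f(a)$ without justification, so your instinct that more care is needed when $f(a)>0.5$ is correct --- but the resolution is to weaken the statement (or restrict $f$), not to hunt for a cleverer decomposition.
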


\begin{proof}
Let $S$ is regular AG-groupoid and let $a\in S$, then there exists $x\in S,$
such that $a=(ax)a$. Now by using $(1)$ and $(4)$, we have%
\begin{equation*}
a=(ax)a=(((ax)a)a)a=((aa)(ax))a=(a((aa)x))a,
\end{equation*}

therefore

\begin{eqnarray*}
((f\circ _{0.5}1)\circ _{0.5}f)(a) &=&\dbigvee \limits_{a=bc}\min \{(f\circ
_{0.5}1)(b),f(c),0.5\} \\
&\geq &\min \{(f\circ _{0.5}1)(a((aa)x)),f(a),0.5\} \\
&=&\min \{ \dbigvee \limits_{a((aa)x)=tr}\min \{f(t),1(r),0.5\},~f(a),0.5\}
\\
&\geq &\min \{ \min \{f(a),1((aa)x),0.5\},f(a),0.5\} \\
&=&\min \{f(a),1,0.5\}=\min \{f(a),0.5\}=f(a).
\end{eqnarray*}

Hence $f\subseteq (f\circ _{0.5}1)\circ _{0.5}f$.\ On the other hand we know
by preposition \ref{P 4.5} that $(f\circ _{0.5}1)\circ _{0.5}f\subseteq f$.
This implies that $(f\circ _{0.5}1)\circ _{0.5}f=f$.
\end{proof}

\begin{lemma}
\label{L 4.4}Let $S$ be a regular AG-groupoid with left identity $e$ and $%
f,g $ be $(\in ,\in \vee q)$- fuzzy bi-ideals of $S$, then $f\circ _{0.5}g$
is also an $(\in ,\in \vee q)$- fuzzy bi-ideal of $S$.
\end{lemma}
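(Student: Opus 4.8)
The plan is to verify the two defining conditions of an $(\in ,\in \vee q)$-fuzzy bi-ideal for $f\circ _{0.5}g$, namely (i) $(f\circ _{0.5}g)(xy)\geq \min\{(f\circ _{0.5}g)(x),(f\circ _{0.5}g)(y),0.5\}$ and (ii) $(f\circ _{0.5}g)((xy)z)\geq \min\{(f\circ _{0.5}g)(x),(f\circ _{0.5}g)(z),0.5\}$. The key tool is that $f\circ _{0.5}g$ is, up to the truncation at $0.5$, the ordinary product, so the supremum in its definition can be unwound into an iterated supremum over factorizations, and then the AG-groupoid identities (1)--(4) together with regularity let each factor be re-grouped so that the bi-ideal inequalities for $f$ and $g$ separately apply to the re-grouped pieces.

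First I would treat condition (i). Starting from a factorization $x=ab$, $y=cd$, I would write $xy=(ab)(cd)=(ac)(bd)$ by the medial law (2), so $(f\circ _{0.5}g)(xy)\geq \min\{f(ac),g(bd),0.5\}$; then using that $f$ is an $(\in ,\in \vee q)$-fuzzy subgroupoid, $f(ac)\geq \min\{f(a),f(c),0.5\}$, and similarly $g(bd)\geq \min\{g(b),g(d),0.5\}$. Taking suprema over all factorizations $x=ab$ and $y=cd$ and using the distributivity of $\min$ over $\bigvee$ yields $(f\circ _{0.5}g)(xy)\geq \min\{(f\circ _{0.5}g)(x),(f\circ _{0.5}g)(y),0.5\}$, which is condition (i). Note this step does not even use regularity.

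Next I would handle condition (ii), which is where regularity enters. Given $x,y,z\in S$, apply regularity to the element $xy$ or better directly reshuffle $(xy)z$ using a factorization of $x$ and of $z$: write $x=ab$ and $z=cd$; then $(xy)z=((ab)y)(cd)$, and using the left invertive law (1), the paramedial law (3) and identity (4) I would re-bracket this as a product of the form $((ac')(\cdots))((\cdots)(bd'))$ or, more to the point, as $(p\cdot\text{(something)})\cdot q$ with $p$ built from $a,c$ and $q$ built from $b,d$, so that the bi-ideal property $f((p\,s)q)\geq\min\{f(p),f(q),0.5\}$ and $g(\cdots)\geq\min\{g(\cdots),\cdots,0.5\}$ can be invoked; here the existence of a left identity and regularity ($xy=((xy)u)(xy)$ for some $u$) provide the freedom to insert the needed factors. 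After applying these inequalities to $f$ and $g$ and taking suprema over the factorizations of $x$ and $z$, condition (ii) follows.

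The main obstacle I anticipate is purely combinatorial bookkeeping in step two: finding the correct sequence of applications of (1)--(4) that transforms $((ab)y)(cd)$ into an expression simultaneously exhibiting a bi-ideal-shaped subword in the $f$-factors and one in the $g$-factors, while the "middle" term absorbs harmlessly into the $1$ or is bounded below using regularity. A safer route, which I would fall back on, is to use the operator identities already proved in the excerpt: since Theorem \ref{T 4.3} gives $(h\circ _{0.5}1)\circ _{0.5}h=h$ for every $(\in ,\in \vee q)$-fuzzy bi-ideal $h$, and Proposition \ref{P 4.4}, Proposition \ref{P 4.5}, and Proposition \ref{P 4.2} give the containments $h\circ _{0.5}h\subseteq h$ and $(h\circ _{0.5}1)\circ _{0.5}h\subseteq h$ and monotonicity, one can compute $((f\circ _{0.5}g)\circ _{0.5}1)\circ _{0.5}(f\circ _{0.5}g)$ and $(f\circ _{0.5}g)\circ _{0.5}(f\circ _{0.5}g)$, repeatedly applying the medial and paramedial laws at the level of $\circ _{0.5}$ (which behaves as an AG-groupoid operation on $F(S)$) to shuffle the four factors into the patterns $(f\circ _{0.5}1)\circ _{0.5}f$ and $(g\circ _{0.5}1)\circ _{0.5}g$, and then conclude $\subseteq f\circ _{0.5}g$. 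This reduces the whole lemma to formal manipulation of the $0.5$-product together with the associativity-type lemmas already in hand, sidestepping the pointwise re-bracketing.
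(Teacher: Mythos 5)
Your handling of the subgroupoid condition is correct and in fact cleaner than the paper's: from $x=ab$, $y=cd$ the medial law gives $xy=(ac)(bd)$, so $(f\circ _{0.5}g)(xy)\geq f(ac)\wedge g(bd)\wedge 0.5\geq f(a)\wedge f(c)\wedge g(b)\wedge g(d)\wedge 0.5$, and taking suprema over the two independent families of factorizations yields condition $(i)$ with no appeal to regularity at all, whereas the paper reaches the same containment $(f\circ _{0.5}g)\circ _{0.5}(f\circ _{0.5}g)\subseteq f\circ _{0.5}g$ only after invoking $a=(ax)a$ and a long chain of rewritings. Note, though, that you have the roles of the hypotheses reversed: in the paper regularity is used precisely for this first condition, while the second condition needs only the left identity.

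The gap is in condition $(ii)$, which is the substance of the lemma and which you never actually prove. You correctly sense that $((ab)y)(cd)$ must be re-bracketed so that a bi-ideal-shaped word in the $f$-components appears, but you stop at ``combinatorial bookkeeping''; producing that identity is the whole point, and the one that works is
\begin{equation*}
(xy)z=((ab)y)(cd)=((ab)c)(yd)=(dc)(y(ab))=((y(ab))c)d=((a(yb))c)d,
\end{equation*}
by $(2)$, $(3)$, $(1)$ and $(4)$ in turn. The target is not symmetric in $f$ and $g$: only one bi-ideal-shaped subword, namely $(a(yb))c$ in the $f$-components $a,c$, is required; the $g$-component $b$ is absorbed into the middle and its value simply dropped from the minimum, while $d$ remains outside as the $g$-factor of the top-level product, giving $(f\circ _{0.5}g)((xy)z)\geq f((a(yb))c)\wedge g(d)\wedge 0.5\geq f(a)\wedge g(b)\wedge f(c)\wedge g(d)\wedge 0.5$. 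Your fallback does not repair this: the containment $(P\circ _{0.5}1)\circ _{0.5}P\subseteq P$ for $P=f\circ _{0.5}g$ is exactly what has to be established and cannot be quoted from Proposition \ref{P 4.5}, since that proposition presupposes $P$ is already a bi-ideal; moreover your proposed target patterns $(f\circ _{0.5}1)\circ _{0.5}f$ and $(g\circ _{0.5}1)\circ _{0.5}g$ would need six atoms, whereas $((f\circ _{0.5}g)\circ _{0.5}1)\circ _{0.5}(f\circ _{0.5}g)$ has only five. The operator-level shuffle that does succeed, namely (writing juxtaposition for $\circ _{0.5}$) $((fg)1)(fg)=((f(1g))f)g\subseteq ((f1)f)g\subseteq fg$ via Propositions \ref{P 4.2} and \ref{P 4.5}, is just the displayed element identity in disguise, so on either route the missing rewriting has to be produced explicitly.
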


\begin{proof}
Let $f$ and $g$ be an $(\in ,\in \vee q)$-fuzzy bi-ideals of $S$ and let $%
a\in S$.

If $a\neq bc$ for some $b,c\in S$, then $((f\circ _{0.5}g)\circ
_{0.5}(f\circ _{0.5}g))(a)=0\leq (f\circ _{0.5}g)(a)\,$,

and if $a=bc$ for some $b,c\in S$, then%
\begin{eqnarray*}
((f\circ _{0.5}g)\circ _{0.5}(f\circ _{0.5}g))(a) &=&\dbigvee
\limits_{a=bc}\{(f\circ _{0.5}g)(b)\wedge (f\circ _{0.5}g)(c)\wedge 0.5\} \\
&=&\dbigvee \limits_{a=bc}\left \{ 
\begin{array}{c}
\dbigvee \limits_{b=kl}\{f(k)\wedge g(l)\wedge 0.5\} \\ 
\wedge \dbigvee \limits_{c=mn}\{f(m)\wedge g(n)\wedge 0.5\} \wedge 0.5%
\end{array}%
\right \} \\
&=&\dbigvee \limits_{a=bc}\dbigvee \limits_{b=kl}\dbigvee
\limits_{c=mn}\{f(k)\wedge g(l)\wedge f(m)\wedge g(n)\wedge 0.5\} \\
&\leq &\dbigvee \limits_{a=bc}\dbigvee \limits_{b=kl}\dbigvee
\limits_{c=mn}\{f(m)\wedge g(n)\wedge 0.5\} \text{.}
\end{eqnarray*}

Since $a=bc$, $b=kl$ and $c=mn$. So $a=(kl)(mn)$. Also since $S$ is regular
so there exists $x\in S$ such that $a=(ax)a$. Now by using $(4),(2),(3)$ and 
$(1)$ we have%
\begin{eqnarray*}
a &=&(ax)a=(((kl)(mn))x)((kl)(mn)) \\
&=&(((kl)(mn))x)(m((kl)n))=m((((kl)(mn))x)((kl)n)) \\
&=&m((((kl)(mn))(ex))((kl)n))=m(((xe)((mn)(kl)))((kl)n)) \\
&=&m(((xe)((lk)(nm)))((kl)n))=m(((xe)(n((lk)m)))((kl)n)) \\
&=&m((n((xe)((lk)m)))((kl)n))=m((((kl)n)((xe)((lk)m)))n) \\
&=&m(((((lk)m)(xe))(n(kl)))n)=m((n((((lk)m)(xe))(kl)))n)\text{.}
\end{eqnarray*}

Then 
\begin{eqnarray*}
&&((f\circ _{0.5}g)\circ _{0.5}(f\circ _{0.5}g))(a) \\
&\leq &\dbigvee \limits_{a=bc}\dbigvee \limits_{b=kl}\dbigvee
\limits_{c=mn}\{f(m)\wedge g(n)\wedge 0.5\} \\
&\leq &\dbigvee \limits_{a=m((n((((lk)m)(xe))(kl)))n)}\{f(m)\wedge
g(n)\wedge 0.5\} \text{.}
\end{eqnarray*}

Since $g$ is an $(\in ,\in \vee q)$- fuzzy bi-ideal of $S$ so we have 
\begin{eqnarray*}
g((n((((lk)m)(xe))(kl)))n) &\geq &\{g(n)\wedge g(n)\wedge 0.5\} \\
&\geq &\{g(n)\wedge 0.5\} \text{.}
\end{eqnarray*}

Then 
\begin{eqnarray*}
&&((f\circ _{0.5}g)\circ _{0.5}(f\circ _{0.5}g))(a) \\
&\leq &\dbigvee \limits_{a=m((n((((lk)m)(xe))(kl)))n)}\{f(m)\wedge
g(n)\wedge 0.5\} \\
&=&\dbigvee \limits_{a=m((n((((lk)m)(xe))(kl)))n)}\{f(m)\wedge g(n)\wedge
0.5\wedge 0.5\} \\
&\leq &\dbigvee \limits_{a=m((n((((lk)m)(xe))(kl)))n)}\{f(m)\wedge
g((n((((lk)m)(xe))(kl)))n)\wedge 0.5\} \\
&\leq &\dbigvee \limits_{a=uv}\{f(u)\wedge g(v)\wedge 0.5\}=(f\circ
_{0.5}g)(a)\text{.}
\end{eqnarray*}

Therefore $((f\circ _{0.5}g)\circ _{0.5}(f\circ _{0.5}g))(a)\leq (f\circ
_{0.5}g)(a),$ and $f\circ _{0.5}g$ is an $(\in ,\in \vee q)$- fuzzy
subgroupoid of $S$.

Let $a,b,c\in S$. Let $a=pq$ and $c=rs$. Then by using $(2),(3),(1)$ and $%
(4) $ we have%
\begin{equation*}
(ab)c=((pq)b)(rs)=(sr)(b(pq))=((b(pq))r)s=((p(bq))r)s\text{,}
\end{equation*}

and since $f$ is an $(\in ,\in \vee q)$- fuzzy bi-ideal of $S$, so, $%
f((p(bq))r)\geq \{f(p)\wedge f(r)\wedge 0.5\}$. Now%
\begin{eqnarray*}
(f\circ _{0.5}g)(a)\wedge (f\circ _{0.5}g)(c) &=&\left[ \dbigvee%
\limits_{a=pq}\{f(p)\wedge g(q)\wedge 0.5\} \right] \wedge \left[ \dbigvee
\limits_{c=rs}\{f(r)\wedge g(s)\wedge 0.5\} \right] \\
&=&\dbigvee \limits_{a=pq}\dbigvee \limits_{c=rs}[\{f(p)\wedge g(q)\wedge
0.5\} \wedge \{f(r)\wedge g(s)\wedge 0.5\}] \\
&=&\dbigvee \limits_{a=pq}\dbigvee \limits_{c=rs}\{f(p)\wedge f(r)\wedge
g(q)\wedge g(s)\wedge 0.5\} \\
&\leq &\dbigvee \limits_{a=pq}\dbigvee \limits_{c=rs}\{f(p)\wedge f(r)\wedge
g(s)\wedge 0.5\} \\
&\leq &\dbigvee \limits_{a=((p(bq))r)s}\{f(p)\wedge f(r)\wedge g(s)\wedge
0.5\} \\
&\leq &\dbigvee \limits_{a=((p(bq))r)s}\{f((p(bq))r)\wedge g(s)\wedge
0.5\}=(f\circ _{0.5}g)(a)\text{.}
\end{eqnarray*}%
Thus $(f\circ _{0.5}g)((ab)c)\geq (f\circ _{0.5}g)(a)\wedge (f\circ
_{0.5}g)(c)$. Consequently $f\circ _{0.5}g$ is an $(\in ,\in \vee q)$- fuzzy
bi-ideal of $S$.
\end{proof}

\begin{theorem}
\label{T 4.4}Let $S$ be a regular and intra-regular AG-groupoid with left
identity $e$, then the following holds:

$(i)$ $f\circ _{0.5}f=f$ for every $(\in ,\in \vee q)$-fuzzy bi-ideal $f$ of 
$S$,

$(ii)$ $f$ $\cap _{0.5}g=$ $f\circ _{0.5}g\cap _{0.5}g\circ _{0.5}f,$ for
all $(\in ,\in \vee q)$-fuzzy bi-ideals $f$ and $g$ of S.
\end{theorem}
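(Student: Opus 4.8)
I would treat the two parts separately, recycling what has already been built. For part $(i)$ one inclusion is free: Proposition \ref{P 4.4} gives $f\circ _{0.5}f\subseteq f$ for any $(\in ,\in \vee q)$-fuzzy bi-ideal $f$, and its proof uses only the fuzzy-subgroupoid inequality, not regularity. So the content of $(i)$ is the reverse inclusion $f\subseteq f\circ _{0.5}f$, and this is precisely where both the regularity and the intra-regularity of $S$ enter. For part $(ii)$, one of the two inclusions drops out formally from $(i)$; the other needs a computation of the same flavour as the proof of Lemma \ref{L 4.4}.

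To get $f\subseteq f\circ _{0.5}f$ in $(i)$, fix $a\in S$. Regularity supplies $x\in S$ with $a=(ax)a$, and intra-regularity supplies $y,z\in S$ with $a=(ya^{2})z$. I would substitute these witnesses into one another and then shuffle the letters, using the left invertive law $(1)$, the medial law $(2)$, the paramedial law $(3)$ and identity $(4)$, until $a$ is written as a product $a=bc$ in which each of $b$ and $c$ is a ``sandwich built out of $a$'', that is, an expression of the shape $(a\,s)\,a$ for a suitable $s\in S$ (much as in the rewriting of $a$ at the start of the proof of Theorem \ref{T 4.3} and in the chain of rewritings in the proof of Lemma \ref{L 4.4}). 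Once $b$ and $c$ are in that form, the two defining inequalities of an $(\in ,\in \vee q)$-fuzzy bi-ideal force $f(b)\geq \min \{f(a),f(a),0.5\}=\min \{f(a),0.5\}$ and likewise $f(c)\geq \min \{f(a),0.5\}$, so $(f\circ _{0.5}f)(a)\geq \min \{f(b),f(c),0.5\}\geq \min \{f(a),0.5\}=f(a)$. With Proposition \ref{P 4.4} this settles $(i)$.

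For $(ii)$, put $h=f\cap _{0.5}g$. By Proposition \ref{P 4.3}, $h$ is again an $(\in ,\in \vee q)$-fuzzy bi-ideal of $S$, so part $(i)$ applies to it and yields $h\circ _{0.5}h=h$. Since $h\subseteq f$ and $h\subseteq g$, Proposition \ref{P 4.2} gives $h=h\circ _{0.5}h\subseteq f\circ _{0.5}g$ and, in the same way, $h=h\circ _{0.5}h\subseteq g\circ _{0.5}f$; as $h(a)\leq 0.5$ for every $a$, this is exactly the inclusion $f\cap _{0.5}g\subseteq (f\circ _{0.5}g)\cap _{0.5}(g\circ _{0.5}f)$. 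For the reverse inclusion, Lemma \ref{L 4.4} tells us that $f\circ _{0.5}g$ and $g\circ _{0.5}f$ are $(\in ,\in \vee q)$-fuzzy bi-ideals, and it is enough to prove the pointwise estimate $\min \{(f\circ _{0.5}g)(a),(g\circ _{0.5}f)(a),0.5\}\leq \min \{f(a),g(a),0.5\}$. I would reduce this to showing $f\circ _{0.5}g\subseteq f$ and $f\circ _{0.5}g\subseteq g$ (the analogous inclusions for $g\circ _{0.5}f$ then come by swapping $f$ and $g$, and since all the products take values $\leq 0.5$ these give the estimate at once); and each of these inclusions is again obtained by taking a factorization $a=bc$, bringing in the regular witness $a=(ax)a$ together with the intra-regular witness, and reorganizing $a$ via $(1)$--$(4)$ into a sandwich to which the bi-ideal inequality of $f$ (respectively $g$) can be applied with the appropriate letter on the outside.

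The one genuinely delicate point in both parts is the non-associative bookkeeping: since $S$ carries no associativity, the required sandwich form does not appear by itself but must be produced by a carefully ordered chain of applications of $(1)$--$(4)$, with constant attention to which letters end up on the outside of a product. Once an element has been brought into that shape, the rest is formal and uses only Definitions \ref{def 1} and \ref{def 2} together with Propositions \ref{P 4.2}, \ref{P 4.3} and \ref{P 4.4}; no property of $(\in ,\in \vee q)$-fuzzy bi-ideals beyond those already proved is needed.
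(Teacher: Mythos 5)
Your treatment of part $(i)$, and of the inclusion $f\cap _{0.5}g\subseteq f\circ _{0.5}g\cap _{0.5}g\circ _{0.5}f$ in part $(ii)$, matches the paper: Proposition \ref{P 4.4} for $f\circ _{0.5}f\subseteq f$; a rewriting of $a$, using both witnesses $a=(ax)a$ and $a=(ya^{2})z$, into a product of two $a$-sandwiches for the reverse inclusion (the paper's explicit chain ends with $a=((a^{2}(zx((y(zx))y)))a^{2})((ax)a)$); and Proposition \ref{P 4.3} together with $(i)$ for the forward half of $(ii)$. (Both you and the paper pass silently from $(f\circ _{0.5}f)(a)\geq f(a)\wedge 0.5$ to $=f(a)$, which requires $f(a)\leq 0.5$; that defect belongs to the statement rather than to your argument.)

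The genuine gap is in the reverse inclusion of $(ii)$. You reduce it to the two separate containments $f\circ _{0.5}g\subseteq f$ and $f\circ _{0.5}g\subseteq g$. First, this claim is strictly stronger than the theorem: combined with the forward half it would yield $f\cap _{0.5}g=f\circ _{0.5}g$ outright, making the intersection with $g\circ _{0.5}f$ in the statement pointless, and nothing in the paper supports such a collapse. Second, the sandwich technique cannot deliver it: to bound $f(a)$ below by $f(b)\wedge 0.5$ for a factorization $a=bc$ you must exhibit $a$ as $(bw)b$ with $b$ occupying \emph{both} outer slots of a top-level product, because the inequality $f((xy)z)\geq f(x)\wedge f(z)\wedge 0.5$ gives nothing when only one outer slot is $f$-controlled; a single factorization $a=bc$ places only one $b$ at the top level, and you supply no rearrangement that produces two. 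The paper's mechanism is built precisely around this obstruction: by Lemma \ref{L 4.4} the function $h^{\prime }=f\circ _{0.5}g\cap _{0.5}g\circ _{0.5}f$ is again an $(\in ,\in \vee q)$-fuzzy bi-ideal, so $(i)$ gives $h^{\prime }=h^{\prime }\circ _{0.5}h^{\prime }\subseteq (f\circ _{0.5}g)\circ _{0.5}(g\circ _{0.5}f)$; the left invertive law in $(F(S),\circ _{0.5})$ rewrites this as $((g\circ _{0.5}f)\circ _{0.5}g)\circ _{0.5}f$, which now has a copy of $f$ available at each end, and Theorem \ref{T 4.3} (that $(f\circ _{0.5}1)\circ _{0.5}f=f$) finishes with $\subseteq f$, and symmetrically $\subseteq g$. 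You would need to adopt this route, or else supply the explicit (and currently missing, quite possibly impossible) rearrangement proving your stronger containment.
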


\begin{proof}
$(i)$

Let $f$ be an $(\in ,\in \vee q)$-fuzzy bi-ideal of $S$ and $a\in S$. Since $%
S$ is regular and intra-regular, so there exists $x,~y,~z\in S,$ such that $%
a=(ax)a$ and $a=(ya^{2})z,$now by using $(2),$ $(3)$, $(4)$and $(1)$ we have%
\begin{eqnarray*}
a &=&(ax)a=(((ax)a)x)((ax)a)=(((ax)((ya^{2})z))x)((ax)a) \\
&=&(((ax)((ya^{2})(ez)))x)((ax)a)=(((ax)((ze)(a^{2}y)))x)((ax)a) \\
&=&(((ax)(a^{2}((ze)y)))x)((ax)a)=((a^{2}((ax)((ze)y)))x)((ax)a) \\
&=&((x((ax)((ze)y)))a^{2})((ax)a)=((x((ze)((ax)y)))a^{2})((ax)a) \\
&=&((x((z(ax))y))a^{2})((ax)a)=(((z(ax))(xy))a^{2})((ax)a) \\
&=&(((zx)((ax)y))a^{2})((ax)a)=(((zx)((((ya^{2})z)x)y))a^{2})((ax)a) \\
&=&(((zx)(((xz)(ya^{2}))y))a^{2})((ax)a)=(((zx)(((a^{2}y)(zx))y))a^{2})((ax)a)
\\
&=&(((zx)((((zx)y)a^{2})y))a^{2})((ax)a)=(((zx)((ya^{2})((zx)y)))a^{2})((ax)a)
\\
&=&(((zx)((y(zx))(a^{2}y)))a^{2})((ax)a)=(((zx)(a^{2}((y(zx))y)))a^{2})((ax)a)
\\
&=&((a^{2}(zx((y(zx))y)))a^{2})((ax)a)\text{.}
\end{eqnarray*}

Then

\begin{eqnarray*}
(f\circ _{0.5}f)(a) &=&\dbigvee \limits_{a=pq}\left \{ f(p)\wedge f(q)\wedge
0.5\right \} \\
&\geq &\left \{ f((a^{2}(zx((y(zx))y)))a^{2})\wedge f((ax)a)\wedge 0.5\right
\} \\
&\geq &\left \{ 
\begin{array}{c}
\{f(a^{2})\wedge f(a^{2})\wedge 0.5\} \\ 
\wedge \{f(a)\wedge f(a)\wedge 0.5\} \wedge 0.5%
\end{array}%
\right \} \\
&\geq &\left \{ 
\begin{array}{c}
\{f(a)\wedge f(a)\wedge 0.5\} \\ 
\wedge \{f(a)\wedge f(a)\wedge 0.5\} \wedge 0.5%
\end{array}%
\right \} \\
&=&\left \{ f(a)\wedge 0.5\right \} =
\end{eqnarray*}

On the other hand by preposition \ref{P 4.5},$\ (f\circ _{0.5}f)(a)\leq f(a)$%
. Hence $(f\circ _{0.5}f)(a)=f(a)$ implies that $f\circ _{0.5}f=f$.

$(i)\Longrightarrow (ii)$

Let $f$ and $g$ be $(\in ,\in \vee q)$-fuzzy bi-ideals of $S$. Then $f\cap
_{0.5}g$ is an $(\in ,\in \vee q)$-fuzzy bi-ideal of $S$. so by $(i)\,$,%
\begin{equation*}
f\cap _{0.5}g=(f\cap _{0.5}g)\circ _{0.5}(f\cap _{0.5}g)\subseteq f\circ
_{0.5}g.
\end{equation*}

On the other hand, by lemma \ref{L 4.4}, $f\circ _{0.5}g$ $\ $and $g\circ
_{0.5}f$ are $(\in ,\in \vee q)$-fuzzy bi-ideals of $S$. Hence $f\circ
_{0.5}g$ $\cap _{0.5}g\circ _{0.5}f$ $\ $is an $(\in ,\in \vee q)$-fuzzy
bi-ideal of $S$. By $(i)$,%
\begin{eqnarray*}
f\circ _{0.5}g\cap _{0.5}g\circ _{0.5}f\ &=&(f\circ _{0.5}g\cap _{0.5}g\circ
_{0.5}f)\circ _{0.5}(f\circ _{0.5}g\cap _{0.5}g\circ _{0.5}f) \\
&\subseteq &(f\circ _{0.5}g)\circ _{0.5}(g\circ _{0.5}f)=((g\circ
_{0.5}f)\circ _{0.5}g)\circ _{0.5}f \\
&\subseteq &((1\circ _{0.5}f)\circ _{0.5}1)\circ _{0.5}f\subseteq (f\circ
_{0.5}1)\circ _{0.5}f \\
&=&f(\text{ as }(f\circ _{0.5}1)\circ _{0.5}f=f\text{, by theorem \ref{T 4.3}%
).}
\end{eqnarray*}

By similar way we can prove that $f\circ _{0.5}g\cap _{0.5}g\circ _{0.5}f\
\subseteq g$. Consequently $\ f\circ _{0.5}g\cap _{0.5}g\circ _{0.5}f\
\subseteq f\cap _{0.5}g$. Therefore%
\begin{equation*}
f\circ _{0.5}g\cap _{0.5}g\circ _{0.5}f\ =f\cap _{0.5}g\text{.}
\end{equation*}
\end{proof}

\section{$(\in ,\in \vee q_{k})$-fuzzy bi-ideals}

All the basic definition concerned with this section in AG-groupoid is same
as for semigroups given in \cite{junn2}. Here we characterize weakly
regular, regular and intra-regular AG-groupoid by the properties of their $%
(\in ,\in \vee q_{k})$-fuzzy bi-ideals.

\begin{definition}
An AG-groupoid $S$ is called weakly regular if for each $a\in S$ we have, $%
a\in (aS)^{2}$, in other words, there exist $x,y\in S$, such that, $%
a=(ax)(ay)$.
\end{definition}

\begin{example}
Let $S=\left \{ 1,2,3,4,5,6\right \} $ be an AG-groupoid with left identity
6 with the following multiplicative table,%
\begin{equation*}
\begin{tabular}{l|llllll}
$\cdot $ & $1$ & $2$ & $3$ & $4$ & $5$ & $6$ \\ \hline
$1$ & $6$ & $1$ & $2$ & $3$ & $4$ & $5$ \\ 
$2$ & $5$ & $6$ & $1$ & $2$ & $3$ & $4$ \\ 
$3$ & $4$ & $5$ & $6$ & $1$ & $2$ & $3$ \\ 
$4$ & $3$ & $4$ & $5$ & $6$ & $1$ & $2$ \\ 
$5$ & $2$ & $3$ & $4$ & $5$ & $6$ & $1$ \\ 
$6$ & $1$ & $2$ & $3$ & $4$ & $5$ & $6$%
\end{tabular}%
\end{equation*}

Clearly $S$ is weakly regular because%
\begin{eqnarray*}
(1\cdot 2)(1\cdot 3) &=&1,(2\cdot 4)(2\cdot 6)=2,(3\cdot 6)(3\cdot 3)=3, \\
(4\cdot 2)(4\cdot 6) &=&4,(5\cdot 4)(5\cdot 3)=5,(6\cdot 6)(6\cdot 6)=6.
\end{eqnarray*}
\end{example}

The following theorems are easy to prove in classical AG-groupoid theory.

\begin{theorem}
\label{th 1}For a weakly regular AG-groupoid $S$ with left identity $e$, the
following conditions are equivalent:

$(i)$ $S$ is regular.

$(ii)$ $R\cap L=RL$ for every right ideal $R$ and every left ideal $L$ of $S$%
.

$(iii)$ $ASA=A$ for every quasi ideal $A$ of $S$.
\end{theorem}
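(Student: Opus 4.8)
The plan is to establish the cycle $(i)\Rightarrow(iii)\Rightarrow(ii)\Rightarrow(i)$. Throughout I would freely use the standard facts about an AG-groupoid $S$ with left identity $e$: that $eA=A$ for every $A\subseteq S$ and $S^{2}=S$, that $Sa$ and $aS$ are left ideals of $S$ with $a=ea\in Sa$, that the intersection of a right ideal with a left ideal is a (non-empty) quasi-ideal, and that every quasi-ideal $A$ of $S$ is a bi-ideal, so in particular $(AS)A\subseteq A$. These are exactly the easy classical AG-groupoid facts alluded to above, and are available in the cited literature.

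For $(i)\Rightarrow(iii)$: let $A$ be a quasi-ideal. Then $ASA=(AS)A\subseteq A$ because $A$ is a bi-ideal. Conversely, for $a\in A$ regularity provides $x\in S$ with $a=(ax)a$; since $ax\in AS$ and $a\in A$, this shows $a\in(AS)A=ASA$. For $(iii)\Rightarrow(ii)$: let $R$ be a right ideal and $L$ a left ideal of $S$. One always has $RL\subseteq RS\cap SL\subseteq R\cap L$, so it remains to prove $R\cap L\subseteq RL$. Put $D=R\cap L$; then $SD\subseteq SL\subseteq L$ and $DS\subseteq RS\subseteq R$, hence $SD\cap DS\subseteq D$ and $D$ is a quasi-ideal. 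By $(iii)$, $D=(DS)D$, and since $DS\subseteq RS\subseteq R$ while $D\subseteq L$ we obtain $D=(DS)D\subseteq RL$, as desired.

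The real content is $(ii)\Rightarrow(i)$, and this is the step that uses weak regularity. Fix $a\in S$ and write $a=(ax)(ay)$ with $x,y\in S$. Applying $(4)$ twice and then the medial law $(2)$ gives, for any $s\in S$, $sa=s\bigl((ax)(ay)\bigr)=(ax)\bigl(a(sy)\bigr)=a^{2}\bigl(x(sy)\bigr)\in a^{2}S$; combining this with the inclusions $a^{2}S\subseteq Sa^{2}\subseteq aS$ (which come from $(2)$, $(3)$, $(4)$ and the left identity) yields $Sa\subseteq aS$. Consequently, using $S^{2}=S$, the paramedial law $(3)$ and the fact that $aS$ is a left ideal, $(aS)S=(aS)(SS)=(SS)(Sa)=S(Sa)\subseteq S(aS)\subseteq aS$, so $aS$ is a two-sided ideal, and $a=(ax)(ay)\in(aS)(aS)\subseteq(aS)S\subseteq aS$. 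Now apply $(ii)$ with $R=aS$ and $L=Sa$: since $a\in aS\cap Sa=(aS)(Sa)$, there exist $u,v\in S$ with $a=(au)(va)$. By the medial and left invertive laws, $a=(au)(va)=(av)(ua)=\bigl((ua)v\bigr)a$; moreover $(ua)v=(va)u$ by $(1)$, and $va\in Sa\subseteq aS$ gives $va=aw'$ for some $w'\in S$, so that $(ua)v=(aw')u\in(aS)S\subseteq aS$, say $(ua)v=aw$. Hence $a=(aw)a$, proving that $S$ is regular.

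The one genuinely delicate point is the pair of preliminary claims in the last paragraph, $Sa\subseteq aS$ and "$aS$ is a two-sided ideal", both of which depend on weak regularity and on careful bookkeeping with the identities $(1)$–$(4)$; in particular the chain $Sa\subseteq a^{2}S\subseteq Sa^{2}\subseteq aS$ is where a sign slip is most likely. Everything else is a routine one- or two-line computation with $(1)$–$(4)$ together with the cited structural facts about AG-groupoids with left identity.
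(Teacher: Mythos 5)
The paper gives no argument for this theorem at all --- it is one of the results dismissed with ``easy to prove in classical AG-groupoid theory'' --- so there is nothing to compare your proof against; what matters is whether your argument stands on its own, and it does. The cycle $(i)\Rightarrow(iii)\Rightarrow(ii)\Rightarrow(i)$ is sound, and the computations in the crucial step $(ii)\Rightarrow(i)$ all check out: $sa=(ax)(a(sy))=(aa)(x(sy))$ gives $Sa\subseteq a^{2}S$, $a^{2}t=(ae)(at)=(te)a^{2}$ gives $a^{2}S\subseteq Sa^{2}$, $t(aa)=a(ta)$ gives $Sa^{2}\subseteq aS$, and the final manipulation $a=(au)(va)=(av)(ua)=((ua)v)a$ with $(ua)v=(va)u\in(aS)S\subseteq aS$ correctly produces $a=(aw)a$. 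One point you should not leave as a black box: ``every quasi-ideal of $S$ is a bi-ideal'' is not a fact about arbitrary AG-groupoids with left identity --- in general one only gets $(AS)A\subseteq SA$ for free, and the containment $(AS)A\subseteq AS$ is what fails. In the present setting it is rescued precisely by your own inclusion $Sa\subseteq aS$ (valid for all $a$ by weak regularity), which yields $SA\subseteq AS$ and hence $(AS)A\subseteq SA\subseteq AS$, so that the quasi-ideal condition $AS\cap SA\subseteq A$ applies; since you prove $Sa\subseteq aS$ anyway in the last step, you should state that this is where the bi-ideal claim comes from rather than citing it as folklore. Your reading of $ASA$ as $(AS)A$ is the right one --- it is the bracketing the paper itself uses when it invokes this theorem later (via $A=(AS)A$ in the proof of the regularity characterization by quasi-ideals).
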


\begin{theorem}
\label{th 2}For a weakly regular AG-groupoid $S$ with left identity $e$, the
following conditions are equivalent:

$(i)$ $S$ is intra-regular.

$(ii)$ $R\cap L=LR$ for every right ideal $R$ and every left ideal $L$ of $S$%
.
\end{theorem}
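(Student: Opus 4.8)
The plan is to prove $(i)\Rightarrow(ii)$ and $(ii)\Rightarrow(i)$ separately, using the structural laws $(1)$--$(4)$ together with a few elementary facts valid in any AG-groupoid $S$ with left identity $e$: namely $Se=S$ (from $s=es=(ee)s=(se)e$ via $(1)$), $b^{2}e=b^{2}$ for every $b\in S$ (from $(bb)e=(eb)b=b^{2}$ via $(1)$), and consequently $Sb^{2}=b^{2}S$ for every $b$ (one inclusion from $b^{2}s=(b^{2}e)s=(se)b^{2}$ via $(1)$, the other from writing $s=te$ and $sb^{2}=(te)b^{2}=(b^{2}e)t=b^{2}t$). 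I will also use that $Sb$ is a left ideal for every $b$, which follows by rewriting $u(vb)=(eu)(vb)=(bu)(ve)$ via $(3)$ and then $=((ve)u)b\in Sb$ via $(1)$.

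For $(i)\Rightarrow(ii)$, fix a right ideal $R$ and a left ideal $L$ and assume $S$ is intra-regular. The inclusion $LR\subseteq R$ is direct: for $\ell\in L$ and $r\in R$, $(1)$ gives $\ell r=(e\ell)r=(r\ell)e$, and since $r\ell\in RS\subseteq R$ we get $(r\ell)e\in RS\subseteq R$. For $LR\subseteq L$ I would apply intra-regularity to the element $\ell r$ itself: write $\ell r=(p(\ell r)^{2})q$, simplify $(\ell r)^{2}=\ell^{2}r^{2}$ by $(2)$, move $p$ inside by $(4)$ to get $\ell r=(\ell^{2}(pr^{2}))q$, and then apply $(1)$ to reach $\ell r=(q(pr^{2}))\,\ell^{2}$; since $\ell^{2}\in SL\subseteq L$ this lies in $SL\subseteq L$. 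Hence $LR\subseteq R\cap L$. For the reverse inclusion, take $a\in R\cap L$ and write $a=(pa^{2})q$; using $(4)$ on $p(aa)=a(pa)$ and then $(1)$ on $(a(pa))q=(q(pa))a$ gives $a=(q(pa))\,a$ with $q(pa)\in SL\subseteq L$ and the trailing factor $a\in R$, so $a\in LR$. Therefore $R\cap L=LR$.

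For $(ii)\Rightarrow(i)$, I would apply the hypothesis with the left ideal $L=Sa^{2}$ and the right ideal $R=S$. Weak regularity of $S$ gives $a=(ax)(ay)=a^{2}(xy)$ by $(2)$, so $a\in a^{2}S=Sa^{2}=R\cap L$; hence $a\in LR=(Sa^{2})S$, which means exactly that there are $x,y\in S$ with $a=(xa^{2})y$, i.e.\ $S$ is intra-regular. (Taking instead the principal left ideal $Sa$ and the principal right ideal $aS\cup Sa$ of $a$ also works, with $(Sa)(aS\cup Sa)=(Sa)(aS)\cup Sa^{2}$ evaluated via $(2)$ and $(3)$, but the product $(Sa)(aS)$ is messier to handle, so $R=S$ is the cleaner choice.)

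I expect $LR\subseteq L$ in the first implication to be the genuine obstacle: using only the ideal axioms and $(1)$ one gets stuck at $LR\subseteq LS$, so it is really necessary to inject intra-regularity of the product $\ell r$, and because every application of $(1)$ interchanges factors one must track carefully which of $L$, $R$ ends up as the right-hand factor at each stage. The remaining steps — the two direct inclusions, the computation placing $a\in R\cap L$ into $LR$, and the choice $R=S$, $L=Sa^{2}$ in $(ii)\Rightarrow(i)$ — are routine once the identities $Se=S$, $b^{2}e=b^{2}$ and $Sb^{2}=b^{2}S$ are in place.
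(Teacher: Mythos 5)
Your argument is correct, and there is nothing in the paper to compare it against: the authors state Theorem \ref{th 2} (together with Theorems \ref{th 1} and \ref{th 3}) without proof, merely asserting that it is ``easy to prove in classical AG-groupoid theory.'' Your proposal therefore supplies a proof the paper omits. I checked the steps you flag as delicate and they go through: the preliminary identities $Se=S$, $b^{2}e=b^{2}$, $Sb^{2}=b^{2}S$ and the fact that $Sb$ is a left ideal are all correct consequences of $(1)$--$(4)$; in $LR\subseteq L$ the chain $\ell r=(p(\ell r)^{2})q=(p(\ell^{2}r^{2}))q=(\ell^{2}(pr^{2}))q=(q(pr^{2}))\ell^{2}\in SL\subseteq L$ uses $(2)$, $(4)$ and $(1)$ legitimately and correctly isolates $\ell^{2}\in SL\subseteq L$ as the right-hand factor; the reverse inclusion $a=(q(pa))a\in LR$ for $a\in R\cap L$ is likewise sound; and in $(ii)\Rightarrow(i)$ the choice $R=S$, $L=Sa^{2}$ is exactly where weak regularity is needed, since $a=(ax)(ay)=a^{2}(xy)\in a^{2}S=Sa^{2}$ places $a$ in $R\cap L=(Sa^{2})S$, which is verbatim the definition of intra-regularity. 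One small presentational point: you should say explicitly that $Sa^{2}$ is non-empty and a left ideal before invoking hypothesis $(ii)$ for it (you have the lemma $S(Sb)\subseteq Sb$ available, so this is one line), since the paper's definition of ideal requires non-emptiness.
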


\begin{theorem}
\label{th 3}For a weakly regular AG-groupoid $S$ with left identity $e$, the
following conditions are equivalent:

$(i)$ $S$ is regular and intra-regular.

$(ii)$ Every quasi ideal of $S$ is idempotent.
\end{theorem}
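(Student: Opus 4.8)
The plan is to prove the two implications separately, using the crisp characterizations in Theorems \ref{th 1} and \ref{th 2} and re-reading, for ordinary subsets, the computation already carried out in the proof of Theorem \ref{T 4.4}$(i)$. First I would record three elementary facts valid in a weakly regular AG-groupoid $S$ with left identity. (a) Every quasi ideal $Q$ is an AG-subgroupoid, since $Q^{2}=QQ\subseteq QS\cap SQ\subseteq Q$; in particular $q^{2}\in Q$ for each $q\in Q$. (b) Every quasi ideal $Q$ is a bi-ideal: the left invertive law gives $(qs)q'=(q's)q\in SQ$, while writing $q'=(q')^{2}w$ (weak regularity of $q'$ followed by the medial law) and then applying $(4)$ gives $(qs)q'=(qs)\big((q')^{2}w\big)=(q')^{2}\big((qs)w\big)\in QS$, whence $(QS)Q\subseteq QS\cap SQ\subseteq Q$. (c) Every one-sided ideal is two-sided: for a right ideal $R$ one has $sr=(es)r=(rs)e\in RS\subseteq R$, and for a left ideal $L$, writing $\ell=\ell^{2}w$ as above, $\ell s=(\ell^{2}w)s=(sw)\ell^{2}\in SL\subseteq L$.

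For $(i)\Rightarrow(ii)$, let $Q$ be a quasi ideal and $a\in Q$. By (a) one always has $Q^{2}\subseteq Q$, so it suffices to show $a\in Q^{2}$. Regularity gives $x$ with $a=(ax)a$ and intra-regularity gives $y,z$ with $a=(ya^{2})z$; substituting these exactly as in the proof of Theorem \ref{T 4.4}$(i)$ produces the identity
\[
a=\big((a^{2}(zx((y(zx))y)))a^{2}\big)\big((ax)a\big).
\]
The right factor equals $(ax)a=a\in Q$, while the left factor has the shape $(a^{2}s)a^{2}$ with $s=zx((y(zx))y)\in S$, hence lies in $(QS)Q\subseteq Q$ by (a) and (b). Therefore $a\in QQ=Q^{2}$, so $Q=Q^{2}$ and $Q$ is idempotent.

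For $(ii)\Rightarrow(i)$, take any right ideal $R$ and any left ideal $L$. By (c) both are two-sided ideals, so $R\cap L$ is an ideal, in particular a quasi ideal, and hence idempotent by $(ii)$. Then $R\cap L=(R\cap L)^{2}\subseteq RL$ and $R\cap L=(R\cap L)^{2}\subseteq LR$, while $RL\subseteq RS\cap SL\subseteq R\cap L$ and $LR\subseteq SR\cap LS\subseteq R\cap L$ (using, for the latter, that $R$ is also a left ideal and $L$ also a right ideal). Thus $R\cap L=RL$ and $R\cap L=LR$; Theorem \ref{th 1} then yields that $S$ is regular and Theorem \ref{th 2} that $S$ is intra-regular, which is $(i)$.

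The one genuinely non-routine step is the chain of rewritings in $(i)\Rightarrow(ii)$ that brings $a$ into a product of two members of $Q$: I would quote the sequence of applications of the medial, paramedial, left invertive and $(4)$ laws from the proof of Theorem \ref{T 4.4}$(i)$ verbatim rather than redo it, the only new observation being that its two factors are $a$ itself and an element of $(QS)Q$. All the rest reduces to the bookkeeping facts (a)--(c), each of which is a two-line computation with the laws $(1)$, $(2)$, $(4)$ and weak regularity.
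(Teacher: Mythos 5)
Your proof is correct, but note that the paper itself offers no argument for Theorem~\ref{th 3}: it is one of the three statements dismissed with ``easy to prove in classical AG-groupoid theory,'' so there is no proof of record to compare against. Your write-up therefore supplies the missing argument, and it hangs together: facts (a)--(c) are each valid two-line consequences of the left invertive law, the medial law, law $(4)$ and weak regularity (the key device, $q'=(q'x)(q'y)=(q')^{2}(xy)$, is exactly where the ``weakly regular with left identity'' hypothesis earns its keep, both in showing quasi ideals are bi-ideals and in showing one-sided ideals are two-sided); the direction $(i)\Rightarrow(ii)$ correctly recycles the factorization $a=\bigl((a^{2}(zx((y(zx))y)))a^{2}\bigr)\bigl((ax)a\bigr)$ already derived in the proof of Theorem~\ref{T 4.4}$(i)$, whose two factors do lie in $(QS)Q\subseteq Q$ and in $Q$ respectively, giving $Q\subseteq Q^{2}$; and $(ii)\Rightarrow(i)$ correctly reduces to the crisp characterizations in Theorems~\ref{th 1} and~\ref{th 2} after observing that $R\cap L$ is a (nonempty, since $RL\subseteq R\cap L$) two-sided ideal and hence an idempotent quasi ideal. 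The one stylistic dependency worth flagging is that your argument is only as solid as the unproved Theorems~\ref{th 1} and~\ref{th 2} and the long rewriting chain in Theorem~\ref{T 4.4}, all of which you inherit from the paper rather than reprove; given that the paper treats those as established, your proof is complete.
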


\begin{definition}
\label{def 3}A fuzzy subset $f$ of an AG-groupoid $S$ is called $(\in ,\in
\vee q_{k})$-fuzzy subgroupoid of $S,$ if we put $\alpha =\in $ and $\beta
=\in \vee q_{k}$ in definition \ref{def 1}.
\end{definition}

\begin{theorem}
\label{th 5}Let $f$ be a fuzzy subset of $S$. Then $f$ is fuzzy subgroupoid
of $S$ if and only if $f(xy)\geq \min \{f(x),f(y),\frac{1-k}{2}\}$.
\end{theorem}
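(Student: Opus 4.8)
The plan is to prove Theorem \ref{th 5} by unwinding Definition \ref{def 3}, which says that $f$ is an $(\in,\in\vee q_k)$-fuzzy subgroupoid precisely when $x_t\in f$ and $y_r\in f$ force $(xy)_{t\wedge r}\in\vee q_k f$ for all $x,y\in S$ and $t,r\in(0,1]$. Recall that by the definitions in the excerpt, $x_{t\wedge r}\in\vee q_k f$ means $f(xy)\geq t\wedge r$ or $f(xy)+(t\wedge r)+k>1$ (the $q_k$-relation being the obvious $k$-analogue of $q$). So the whole statement is the equivalence between this pointwise implication and the single inequality $f(xy)\geq\min\{f(x),f(y),\frac{1-k}{2}\}$ for all $x,y\in S$.

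For the forward direction I would argue by contradiction: suppose $f$ is an $(\in,\in\vee q_k)$-fuzzy subgroupoid but there exist $x,y\in S$ with $f(xy)<\min\{f(x),f(y),\frac{1-k}{2}\}$. Pick $t$ with $f(xy)<t\leq\min\{f(x),f(y),\frac{1-k}{2}\}$. Then $x_t\in f$ and $y_t\in f$, so $(xy)_t\in\vee q_k f$; but $f(xy)<t$ rules out $(xy)_t\in f$, and from $t\leq\frac{1-k}{2}$ together with $f(xy)<t$ we get $f(xy)+t+k<2t+k\leq(1-k)+k=1$, so $(xy)_tq_k f$ fails as well — a contradiction. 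For the converse direction, assume $f(xy)\geq\min\{f(x),f(y),\frac{1-k}{2}\}$ for all $x,y$, and let $x_t\in f$, $y_r\in f$, i.e. $f(x)\geq t$, $f(y)\geq r$. Put $s=t\wedge r$. Then $f(xy)\geq\min\{f(x),f(y),\frac{1-k}{2}\}\geq\min\{s,\frac{1-k}{2}\}$. If $s\leq\frac{1-k}{2}$ then $f(xy)\geq s$, giving $(xy)_s\in f$; if $s>\frac{1-k}{2}$ then $f(xy)\geq\frac{1-k}{2}$, so $f(xy)+s+k>\frac{1-k}{2}+\frac{1-k}{2}+k=1$, giving $(xy)_sq_k f$. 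Either way $(xy)_{t\wedge r}\in\vee q_k f$, so $f$ is an $(\in,\in\vee q_k)$-fuzzy subgroupoid.

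This is essentially the standard $k$-threshold argument, so there is no serious obstacle; the only point demanding a little care is the case split at $s=\frac{1-k}{2}$ and making sure the strict-versus-nonstrict inequalities line up so that exactly one of $\in$ or $q_k$ is guaranteed in each case (in particular that the boundary value $\frac{1-k}{2}$ lands in the $\in$ case). I would present the proof compactly as two short paragraphs mirroring Theorem \ref{T 3.1}, citing Definition \ref{def 3} for the meaning of $(\in,\in\vee q_k)$-fuzzy subgroupoid and noting that the argument is the obvious adaptation of the $(\in,\in\vee q)$ case with $0.5$ replaced by $\frac{1-k}{2}$.
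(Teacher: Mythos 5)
Your proof is correct, and it is exactly the standard threshold argument that the paper itself invokes by deferring to Theorem 5 of \cite{junn2} rather than writing it out; both directions, including the case split at $\frac{1-k}{2}$ and the strict/nonstrict bookkeeping, are handled properly. (You also correctly read the theorem's ``fuzzy subgroupoid'' as the $(\in,\in\vee q_k)$-fuzzy subgroupoid of Definition \ref{def 3}, which is clearly what is intended.)
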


\begin{proof}
It is similar to the proof of theorem $5$ in \cite{junn2}.
\end{proof}

\begin{definition}
A fuzzy subset $f$ of an AG-groupoid $S$ is called $(\in ,\in \vee q_{k})$%
-fuzzy left(right) ideal of $S,$ if for all $x,y\in S$ and $t,r\in (0,1],$
the following condition holds;%
\begin{equation*}
y_{t}\in f\Longrightarrow (xy)_{t}\in \vee q_{k}f(y_{t}\in f\Longrightarrow
(yx)_{t}\in \vee q_{k}f).
\end{equation*}
\end{definition}

\begin{theorem}
A fuzzy subset $f$ of $S$ is an $(\in ,\in \vee q_{k})$-fuzzy left(right)
ideal of $S,$ if and only if 
\begin{equation*}
f(xy)\geq \min \{f(y),\frac{1-k}{2}\} \left( f(xy)\geq \min \{f(x),\frac{1-k%
}{2}\} \right) \text{.}
\end{equation*}
\end{theorem}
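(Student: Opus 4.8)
The statement is a standard "pointwise vs. inequality" characterization for $(\in,\in\vee q_k)$-fuzzy one-sided ideals, entirely analogous to Theorem~\ref{th 5} for subgroupoids, so the plan is to imitate that proof (which in turn follows \cite{junn2}). I will prove only the left-ideal case; the right-ideal case is symmetric, replacing $xy$ by $yx$ throughout. There are two implications to establish.

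For the forward direction, assume $f$ is an $(\in,\in\vee q_k)$-fuzzy left ideal and suppose, for contradiction, that there exist $x,y\in S$ with $f(xy)<\min\{f(y),\tfrac{1-k}{2}\}$. Choose a value $t$ strictly between $f(xy)$ and $\min\{f(y),\tfrac{1-k}{2}\}$, so that $f(y)\ge t$, i.e. $y_t\in f$, while $t\le\tfrac{1-k}{2}$. By hypothesis $(xy)_t\in\vee q_k f$, so either $f(xy)\ge t$ — impossible since $t>f(xy)$ — or $(xy)_t q_k f$, meaning $f(xy)+t+k>1$; but then $f(xy)>1-t-k\ge 1-\tfrac{1-k}{2}-k=\tfrac{1-k}{2}>t>f(xy)$, again a contradiction. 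Hence $f(xy)\ge\min\{f(y),\tfrac{1-k}{2}\}$ for all $x,y$.

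For the converse, assume $f(xy)\ge\min\{f(y),\tfrac{1-k}{2}\}$ for all $x,y\in S$, and let $x,y\in S$, $t\in(0,1]$ with $y_t\in f$, i.e. $f(y)\ge t$. Then $f(xy)\ge\min\{t,\tfrac{1-k}{2}\}$. If $t\le\tfrac{1-k}{2}$ then $f(xy)\ge t$, so $(xy)_t\in f$. If $t>\tfrac{1-k}{2}$ then $f(xy)\ge\tfrac{1-k}{2}$, whence $f(xy)+t+k>\tfrac{1-k}{2}+\tfrac{1-k}{2}+k=1$, so $(xy)_t q_k f$. In either case $(xy)_t\in\vee q_k f$, so $f$ is an $(\in,\in\vee q_k)$-fuzzy left ideal of $S$.

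There is no real obstacle here: the argument is a pure bookkeeping exercise with the threshold $\tfrac{1-k}{2}$, and the only point that needs a little care is keeping the strict/non-strict inequalities consistent when splitting into the two cases $t\le\tfrac{1-k}{2}$ and $t>\tfrac{1-k}{2}$ (and, in the forward direction, choosing the auxiliary $t$ in the open interval so that both $y_t\in f$ and the $q_k$ alternative can be excluded). Since the AG-groupoid structure plays no role beyond the ambient multiplication, the proof transfers verbatim from the semigroup setting, and one may legitimately just write "the proof is similar to that of Theorem~\ref{th 5}" — but the explicit two-case computation above is short enough to include in full.
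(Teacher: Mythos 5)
Your proof is correct: both directions are handled with the standard threshold argument at $\tfrac{1-k}{2}$, and the inequalities in each case check out (in particular, the contradiction $f(xy)>\tfrac{1-k}{2}>t>f(xy)$ in the forward direction and the computation $f(xy)+t+k>\tfrac{1-k}{2}+\tfrac{1-k}{2}+k=1$ in the converse). The paper itself gives no argument beyond "similar to the proof of Theorem 8 in \cite{junn2}," and your write-up is exactly that standard proof made explicit, so it matches the intended approach.
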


\begin{proof}
It is similar to the proof of theorem $8$ in \cite{junn2}.
\end{proof}

Every $(\in ,\in )$-fuzzy left(right) ideal of $S$ is an $(\in ,\in \vee
q_{k})$-fuzzy left(right) ideal of $S$. But an $(\in ,\in \vee q_{k})$-fuzzy
left(right) ideal of $S$ need not be a fuzzy left(right) ideal of $S$.

\begin{theorem}
\label{th 10}If $f$ is an $(\in ,\in \vee q_{k})$-fuzzy left ideal and $g$
is an $(\in ,\in \vee q_{k})$-fuzzy right ideal of a weakly regular
AG-groupoid $S$ with left identity then $f\circ g$ is an $(\in ,\in \vee
q_{k})$-fuzzy two-sided ideal of $S$.
\end{theorem}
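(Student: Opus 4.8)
The plan is to show that $f\circ g$ satisfies the two inequalities characterizing an $(\in,\in\vee q_k)$-fuzzy two-sided ideal, namely $(f\circ g)(xy)\ge\min\{(f\circ g)(y),\tfrac{1-k}{2}\}$ and $(f\circ g)(xy)\ge\min\{(f\circ g)(x),\tfrac{1-k}{2}\}$ for all $x,y\in S$. First I would dispose of the trivial case: if $y$ (respectively $x$) is not a product in $S$ then $(f\circ g)(y)=0$ (respectively $(f\circ g)(x)=0$) and the corresponding inequality holds vacuously, so we may assume the relevant element factors. The real work is to take a factorization of $y$ (or $x$), use the weak regularity of $S$ together with the left-invertive law $(1)$, the medial law $(2)$, the paramedial law $(3)$, and identity $(4)$ to rewrite $xy$ as a product $uv$ in such a way that $f(u)$ dominates $f$ evaluated at the first coordinate of the given factorization and $g(v)$ dominates $g$ at the second, all up to the threshold $\tfrac{1-k}{2}$.

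The key steps, in order, are: (1) for the left-ideal-type inequality, write $y=pq$ with $(f\circ g)(y)=\bigvee_{y=pq}\{f(p)\wedge g(q)\}$; since $S$ is weakly regular there exist $s,t\in S$ with $y=(ys)(yt)$, and I would massage $xy=x((ys)(yt))$ using $(4),(2),(3),(1)$ into a form $(\text{something})(\,g\text{-controlled part})$, invoking that $g$ is an $(\in,\in\vee q_k)$-fuzzy right ideal so that $g$ of the new second coordinate is $\ge\min\{g(q),\tfrac{1-k}{2}\}$, and noting $f$ of the new first coordinate is at least $f(p)$ via the right-ideal property of... — here one must be careful, so instead I would actually exploit the structure used in Theorem~\ref{th 10}'s analogue for semigroups in \cite{junn2} and in Lemma~\ref{L 4.4}, chaining medial/paramedial rearrangements; (2) symmetrically, for the right-ideal-type inequality, factor $x=pq$, apply weak regularity to $x$, and rearrange $xy=(pq)y$ so that the new first coordinate is $f$-controlled via $f$ being an $(\in,\in\vee q_k)$-fuzzy left ideal — wait, $f$ is the \emph{left} ideal, so $f$ of a product is controlled by $f$ of the \emph{second} factor; this asymmetry is exactly why the computation is delicate and why one typically rewrites so that $p$ (the good factor) ends up in the second slot of the relevant sub-product. (3) Finally, bound the big supremum over all factorizations of $xy$ below by the single rearranged factorization, collect the $\tfrac{1-k}{2}$ thresholds, and conclude.

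The main obstacle I expect is step (1)–(2): producing the precise chain of applications of laws $(1)$–$(4)$ that moves the "good" coordinates of the given factorizations of $x$ and $y$ into the positions where the left/right $(\in,\in\vee q_k)$-ideal hypotheses on $g$ and $f$ respectively can be applied, while simultaneously absorbing the extra factors $ys$, $yt$ (or $xs$, $xt$) introduced by weak regularity into an ambient $S$-part that does no harm. This is a purely computational AG-groupoid identity juggling — tedious but mechanical — and is precisely the kind of rearrangement already carried out in Lemma~\ref{L 4.4} and in \cite{junn2}; I would model the rearrangement on those. Once the single good factorization $xy=uv$ with $f(u)\wedge g(v)\ge\min\{f(p),g(q),\tfrac{1-k}{2}\}$ is in hand, the inequality $(f\circ g)(xy)=\bigvee_{xy=\cdot}\ge f(u)\wedge g(v)$ and taking the supremum over the chosen factorization of $x$ (or $y$) finishes the argument, giving both one-sided conditions and hence that $f\circ g$ is an $(\in,\in\vee q_k)$-fuzzy two-sided ideal.
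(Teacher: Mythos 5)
Your high-level skeleton agrees with the paper's: reduce to the two inequalities $(f\circ g)(ab)\geq (f\circ g)(b)\wedge \frac{1-k}{2}$ and $(f\circ g)(ab)\geq (f\circ g)(a)\wedge \frac{1-k}{2}$, handle the vacuous non-factorable case, and otherwise use weak regularity plus the laws $(1)$--$(4)$ to exhibit one favorable factorization of $ab$ that bounds the supremum from below. But the entire mathematical content of the theorem lives in the one step you defer, namely the explicit chain of identities, and you never produce it. In a non-associative structure this is not ``purely mechanical'': there is no guarantee that an arbitrary sequence of applications of the left invertive, medial and paramedial laws lands you in a usable form, and neither Lemma~\ref{L 4.4} (which concerns bi-ideals and a regular $S$) nor the semigroup computations of \cite{junn2} (which are associative) supply the rearrangement needed here. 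The paper's proof applies weak regularity to the \emph{left} factor $a$, writes $b=pq$, and derives
\begin{equation*}
ab=a(pq)=((ax)(ay))(pq)=\cdots=(a((ax)((ye)p)))q,
\end{equation*}
using the left identity $e$ in an essential way; then only the left-ideal property of $f$, iterated three times, gives $f(a((ax)((ye)p)))\geq f(p)\wedge\frac{1-k}{2}$, while $q$ survives untouched as the second factor so that $g(q)$ needs no estimate at all. Your plan instead applies weak regularity to $y$ (the right factor) and aims to control the new second coordinate via the right-ideal property of $g$; you have not checked that any rearrangement starting from $x((ys)(yt))$ reaches such a form, and your own hedging (``here one must be careful \ldots instead I would exploit the structure used in \cite{junn2}'') concedes the point. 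As written, the proposal is a correct statement of what must be done, not a proof that it can be done; the gap is the missing identity chain, which is precisely what the paper supplies.

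A secondary, fixable slip: for the left-ideal-type inequality you assert that $f$ of the new first coordinate is controlled ``via the right-ideal property of'' something before correcting yourself; make sure the final write-up uses that $f$ is a \emph{left} ideal, so $f(u\cdot v)\geq f(v)\wedge\frac{1-k}{2}$, which is why the good factor $p$ must be steered into terminal (second-slot) position inside the first coordinate, exactly as in the displayed factorization above.
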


\begin{proof}
Let $a,b\in S$, then since $S$ is weakly regular so there exists $x,y\in S$
such that $a=(ax)(ay)$. Now%
\begin{eqnarray*}
(f\circ g)(b)\wedge \frac{1-k}{2} &=&\left \{ \dbigvee
\limits_{b=pq}\{f(p)\wedge g(q)\} \right \} \wedge \frac{1-k}{2} \\
&=&\dbigvee \limits_{b=pq}\left \{ f(p)\wedge g(q)\wedge \frac{1-k}{2}\right
\} \\
&=&\dbigvee \limits_{b=pq}\left \{ f(p)\wedge \frac{1-k}{2}\wedge g(q)\right
\} \text{.}
\end{eqnarray*}

Now using $(1)$,$(2)$, $(3)$ and $(4)$, we have 
\begin{eqnarray*}
ab &=&a(pq)=((ax)(ay))(pq)=((pq)(ay))(ax) \\
&=&((ya)(qp))(ax)=(q((ya)p))(ax) \\
&=&((ax)((ya)p))q=((ax)((ya)(ep)))q \\
&=&((ax)((pe)(ay)))q=((ax)(a((pe)y)))q \\
&=&(a((ax)((pe)y)))q=(a((ax)((ye)p)))q\text{,}
\end{eqnarray*}

since $f$ is an $(\in ,\in \vee q_{k})$-fuzzy left ideal so that%
\begin{eqnarray*}
f(a((ax)((ye)p))) &\geq &f((ax)((ye)p))\wedge \frac{1-k}{2} \\
&\geq &f((ye)p)\wedge \frac{1-k}{2}\wedge \frac{1-k}{2} \\
&\geq &f(p)\wedge \frac{1-k}{2}\wedge \frac{1-k}{2}\wedge \frac{1-k}{2} \\
&=&f(p)\wedge \frac{1-k}{2}\text{.}
\end{eqnarray*}

Thus 
\begin{eqnarray*}
(f\circ g)(b)\wedge \frac{1-k}{2} &=&\dbigvee \limits_{b=pq}\left \{
f(p)\wedge \frac{1-k}{2}\wedge g(q)\right \} \\
&\leq &\dbigvee \limits_{ab=(a((ax)((ye)p)))q}\{f(a((ax)((ye)p)))\wedge
g(q)\} \\
&\leq &\dbigvee \limits_{ab=mn}\{f(m)\wedge g(n)\} \\
&=&(f\circ g)(ab)\text{.}
\end{eqnarray*}%
Therefore $(f\circ g)(ab)\geq (f\circ g)(b)\wedge \frac{1-k}{2}$.

Similarly we can prove that $(f\circ g)(ab)\geq (f\circ g)(a)\wedge \frac{1-k%
}{2}$. Hence $f\circ g$ is an $(\in ,\in \vee q_{k})$-fuzzy two-sided ideal
of $S$.
\end{proof}

\begin{definition}
\label{def 4}A fuzzy subset $f$ of an AG-groupoid $S$ is called $(\in ,\in
\vee q_{k})$-fuzzy generalized bi-ideal of $S,$ if we put $\alpha =\in $ and 
$\beta =\in \vee q_{k}$ in definition \ref{def 2}.
\end{definition}

\begin{definition}
A fuzzy subset $f$ of an AG-groupoid $S$ is called $(\in ,\in \vee q_{k})$%
-fuzzy generalized bi-ideal of $S,$ if definitions \ref{def 3} and \ref{def
4} holds.
\end{definition}

\begin{lemma}
Let $f$ be a fuzzy subset of AG-groupoid $S$, then $f$ is $(\in ,\in \vee
q_{k})$-fuzzy bi-ideal of $S$ if and only if

$(i)$ $f(xy)\geq \min \{f(x),$ $f(y),\frac{1-k}{2}\},$ for all $x,y\in S,$

$(ii)$ $f((xy)z)\geq \min \{f(x),f(z),\frac{1-k}{2}\},$ for all $x,y,z\in S.$
\end{lemma}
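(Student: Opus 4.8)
The plan is to split the biconditional along the two clauses that define an $(\in,\in\vee q_{k})$-fuzzy bi-ideal, namely that $f$ be simultaneously an $(\in,\in\vee q_{k})$-fuzzy subgroupoid (Definition \ref{def 3}) and an $(\in,\in\vee q_{k})$-fuzzy generalized bi-ideal (Definition \ref{def 4}). Clause $(i)$ is then exactly Theorem \ref{th 5}, so nothing new is needed there; the entire argument reduces to showing that the pointwise inequality in $(ii)$ is equivalent to the quasi-coincidence implication ``$x_{t}\in f$ and $z_{r}\in f\Rightarrow((xy)z)_{t\wedge r}\in\vee q_{k}f$ for all $x,y,z\in S$ and $t,r\in(0,1]$'' coming from Definition \ref{def 2} with $\alpha=\in$, $\beta=\in\vee q_{k}$. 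This is the generalized-bi-ideal analogue of the left/right-ideal characterization proved ``as in \cite{junn2}'' above, and I would carry it out in the same style.

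For the direction ``$(ii)\Rightarrow$ implication'': suppose $x_{t}\in f$ and $z_{r}\in f$, so $f(x)\geq t$ and $f(z)\geq r$. Then $(ii)$ gives $f((xy)z)\geq\min\{f(x),f(z),\frac{1-k}{2}\}\geq\min\{t\wedge r,\frac{1-k}{2}\}$. If $t\wedge r\leq\frac{1-k}{2}$ this says $f((xy)z)\geq t\wedge r$, i.e. $((xy)z)_{t\wedge r}\in f$; if $t\wedge r>\frac{1-k}{2}$ then $f((xy)z)\geq\frac{1-k}{2}$, whence $f((xy)z)+(t\wedge r)+k>\frac{1-k}{2}+\frac{1-k}{2}+k=1$, i.e. $((xy)z)_{t\wedge r}q_{k}f$. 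Either way $((xy)z)_{t\wedge r}\in\vee q_{k}f$. For the converse I would argue by contradiction: if $(ii)$ failed there would be $x,y,z\in S$ and a real $t$ with $f((xy)z)<t\leq\min\{f(x),f(z),\frac{1-k}{2}\}$; then $x_{t}\in f$ and $z_{t}\in f$, so the hypothesis forces $((xy)z)_{t}\in\vee q_{k}f$, yet $f((xy)z)<t$ rules out $((xy)z)_{t}\in f$, and $t\leq\frac{1-k}{2}$ with $f((xy)z)<t$ gives $f((xy)z)+t+k<2t+k\leq1$, ruling out $((xy)z)_{t}q_{k}f$ — a contradiction. Hence $(ii)$ holds. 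Combining this with Theorem \ref{th 5} for clause $(i)$ proves the lemma.

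I do not expect a genuine obstacle here: the only point requiring care is the routine bookkeeping with the relation $q_{k}$ (recall $x_{t}q_{k}f$ means $f(x)+t+k>1$) and the case split at the threshold $\frac{1-k}{2}$, carried out exactly as in the left/right-ideal case already treated in the paper. The argument is otherwise purely a matter of chasing the level-set inequalities through the two definitions.
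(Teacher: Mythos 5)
Your proposal is correct and is essentially the argument the paper intends: the paper itself gives no details, deferring to the proof of Theorem 5 in the cited semigroup paper, and your level-set/quasi-coincidence chase with the case split at $\tfrac{1-k}{2}$ is exactly that standard argument transplanted to the generalized bi-ideal clause, with clause $(i)$ handled by Theorem \ref{th 5}. The contradiction step in the converse is sound (note $t>f((xy)z)\geq 0$ guarantees $t\in(0,1]$), so there is no gap.
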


\begin{proof}
It is similar to the proof of theorem $5$ in \cite{junn2}.
\end{proof}

\begin{lemma}
Every $(\in ,\in \vee q_{k})$-fuzzy generalized bi-ideal of a weakly regular
AG-groupoid $S$ with left identity $e$, is an $(\in ,\in \vee q_{k})$-fuzzy
bi-ideal of $S$.
\end{lemma}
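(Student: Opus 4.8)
The plan is to reduce the whole statement to one missing inequality. By the characterization of $(\in ,\in \vee q_{k})$-fuzzy bi-ideals stated just above, a fuzzy subset of $S$ is an $(\in ,\in \vee q_{k})$-fuzzy bi-ideal iff it satisfies both $f(xy)\geq \min\{f(x),f(y),\frac{1-k}{2}\}$ and $f((xy)z)\geq \min\{f(x),f(z),\frac{1-k}{2}\}$. The second inequality is exactly what it means to be an $(\in ,\in \vee q_{k})$-fuzzy generalized bi-ideal (the equivalence being proved just as in \cite{junn2}), so it holds by hypothesis. Hence I only need to deduce the subgroupoid inequality $f(xy)\geq \min\{f(x),f(y),\frac{1-k}{2}\}$, and this is where weak regularity and the left identity come in.

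So I would fix $x,y\in S$ and use weak regularity to pick $p,q\in S$ with $x=(xp)(xq)$. The key step is to repackage this product with law $(4)$ (valid since $S$ has a left identity): applying $a(bc)=b(ac)$ with $a=xp$, $b=x$, $c=q$ turns $(xp)(xq)$ into $x\big((xp)q\big)$, so that $x=xm$ with $m:=(xp)q\in S$. Consequently $xy=(xm)y$, and now the left-hand side has precisely the bracketing $((xy)z)$ appearing in the generalized bi-ideal inequality, with $x$ in the outer-left slot and $y$ in the outer-right slot. Invoking that inequality gives $f(xy)=f\big((xm)y\big)\geq \min\{f(x),f(y),\frac{1-k}{2}\}$, which is the desired subgroupoid condition; together with the hypothesis and the characterization lemma this shows that $f$ is an $(\in ,\in \vee q_{k})$-fuzzy bi-ideal of $S$.

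The hard part is really just the first move: recognizing that weak regularity is the device that replaces the lone $x$ in $xy$ by an expression containing two copies of $x$, and that law $(4)$ then lets one collect $(xp)(xq)$ back into the form ``$x$ times something'', i.e. into a shape that feeds the generalized bi-ideal inequality with $x$ and $y$ in the controlling positions. The remaining work — translating the $\in\vee q_{k}$ definitions into the corresponding $\min$-inequalities, exactly as done for semigroups in \cite{junn2} — is routine; if the generalized bi-ideal version of that equivalence has not been recorded explicitly, it should be inserted before this lemma.
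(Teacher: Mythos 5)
Your proposal is correct and follows essentially the same route as the paper: weak regularity gives $x=(xp)(xq)$, law $(4)$ collapses this to $x=x\big((xp)q\big)$, and then $xy=\big(x((xp)q)\big)y$ is fed into the generalized bi-ideal inequality to obtain the subgroupoid condition $f(xy)\geq \min\{f(x),f(y),\frac{1-k}{2}\}$. Your added remark that the min-form of the generalized bi-ideal condition should be recorded explicitly (as in the cited semigroup reference) is a reasonable expository point but does not change the argument.
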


\begin{proof}
Let $f$ be an $(\in ,\in \vee q_{k})$-fuzzy generalized bi-ideal of $S$ and $%
a,b\in S$, then there exists $x,y\in S$ such that $a=(ax)(ay)$. Then by
using $[4]$, we have $ab=((ax)(ay))b=(a((ax)y)b$. So%
\begin{equation*}
f(ab)=f((a((ax)y)b)\geq \left \{ f(a)\wedge f(b)\wedge \frac{1-k}{2}\right
\} \text{.}
\end{equation*}

This shows that $f$ is an $(\in ,\in \vee q_{k})$-fuzzy subgroupoid of $S$
and so $f$ is an $(\in ,\in \vee q_{k})$-fuzzy bi-ideal of $S$.
\end{proof}

\begin{definition}
A fuzzy subset $f$ is called $(\in ,\in \vee q_{k})$-fuzzy quasi-ideal of
AG-groupoid $S$, if 
\begin{equation*}
f(x)\geq \left \{ (f\circ 1)(x)\wedge (1\circ f)(x)\wedge \frac{1-k}{2}%
\right \} \text{ for all }x\in S.
\end{equation*}
\end{definition}

\begin{lemma}
Every $(\in ,\in \vee q_{k})$-fuzzy quasi-ideal of weakly-regular
AG-groupoid $S$ with left identity $e$, is an $(\in ,\in \vee q_{k})$-fuzzy
bi-ideal of $S$.
\end{lemma}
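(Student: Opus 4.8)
The plan is to read off the two defining inequalities of an $(\in ,\in \vee q_{k})$-fuzzy bi-ideal directly from the quasi-ideal inequality $f(a)\geq (f\circ 1)(a)\wedge (1\circ f)(a)\wedge \frac{1-k}{2}$, using the elementary observation that for any product $uv\in S$ the (always available) factorization $uv=u\cdot v$ forces $(f\circ 1)(uv)\geq f(u)$ and $(1\circ f)(uv)\geq f(v)$, since $1$ is the constant fuzzy subset equal to $1$.

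First I would check that $f$ is an $(\in ,\in \vee q_{k})$-fuzzy subgroupoid. For $x,y\in S$, the factorization $xy=x\cdot y$ gives $(f\circ 1)(xy)\geq f(x)$ and $(1\circ f)(xy)\geq f(y)$, so the quasi-ideal inequality applied at $xy$ yields $f(xy)\geq f(x)\wedge f(y)\wedge \frac{1-k}{2}$. This part uses neither the left identity nor weak regularity.

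Next I would verify the bi-ideal inequality $f((xy)z)\geq f(x)\wedge f(z)\wedge \frac{1-k}{2}$. By weak regularity there exist $u,v\in S$ with $z=(zu)(zv)$, and then the left-identity law $(4)$ rewrites this as $z=z((zu)v)$; hence, using the medial law $(2)$,
\[
(xy)z=(xy)\bigl(z((zu)v)\bigr)=(xz)\bigl(y((zu)v)\bigr).
\]
This displays $(xy)z$ as a product whose first factor is $xz$, so $(f\circ 1)((xy)z)\geq f(xz)$, and the subgroupoid property already proved gives $f(xz)\geq f(x)\wedge f(z)\wedge \frac{1-k}{2}$. Simultaneously, the factorization $(xy)z=(xy)\cdot z$ gives $(1\circ f)((xy)z)\geq f(z)$. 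Substituting both estimates into the quasi-ideal inequality at the point $(xy)z$ produces $f((xy)z)\geq f(x)\wedge f(z)\wedge \frac{1-k}{2}$, so $f$ is an $(\in ,\in \vee q_{k})$-fuzzy bi-ideal. Alternatively, once this last inequality — the generalized bi-ideal condition — is in hand, one may instead just invoke the preceding lemma that a generalized bi-ideal of a weakly regular AG-groupoid with left identity is automatically a bi-ideal.

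I expect the only real obstacle to be the algebraic rearrangement in the second step: one has to massage $(xy)z$ so that a single factor simultaneously absorbs $x$ and $z$ without dragging in a spurious element such as $u$ or $v$. The device ``$z=z((zu)v)$, then medial law'' is precisely what achieves this; by contrast the naive route of applying the medial law directly to $(xy)\bigl((zu)(zv)\bigr)$ gives $(x(zu))(y(zv))$, whose first factor $x(zu)$ only yields the bound $f(x)\wedge f(z)\wedge f(u)\wedge \frac{1-k}{2}$, which is too weak.
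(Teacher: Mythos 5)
Your proof is correct, and for the key second step it takes a genuinely different (and somewhat leaner) route than the paper. Both arguments rest on the same engine: feed the quasi-ideal inequality $f(a)\geq (f\circ 1)(a)\wedge (1\circ f)(a)\wedge \frac{1-k}{2}$ a well-chosen factorization of the point in question, and your first step (the subgroupoid inequality from $xy=x\cdot y$) is exactly what the paper does. For the bi-ideal inequality, however, the paper applies weak regularity to the \emph{first} element, writing $a=(ax)(ay)$ and then grinding through $(1)$--$(4)$ to reach $(ab)c=a\bigl((ce)(b((ax)y))\bigr)$, so that the left factor of the exhibited factorization is literally $a$ and $f(a)$, $f(c)$ are extracted from the two convolutions separately. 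You instead apply weak regularity to the \emph{last} element, $z=(zu)(zv)=z((zu)v)$, and use only $(4)$ and the medial law to get $(xy)z=(xz)\bigl(y((zu)v)\bigr)$; the left factor is $xz$, and you then route through the already-proved subgroupoid inequality $f(xz)\geq f(x)\wedge f(z)\wedge\frac{1-k}{2}$. Your identity is shorter and avoids the paramedial law, at the cost of making the second step logically depend on the first (which is harmless, since you prove the subgroupoid property independently beforehand). One caveat on your closing "alternative": invoking the preceding lemma (generalized bi-ideal $\Rightarrow$ bi-ideal) buys you nothing here, because your derivation of the generalized bi-ideal inequality already used the subgroupoid property — so it is not an independent shortcut, merely a restatement; it would only be a genuine alternative if the generalized bi-ideal condition were obtained without the subgroupoid step, as in the paper's decomposition.
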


\begin{proof}
Let $f$ be an $(\in ,\in \vee q)$-fuzzy quasi-ideal of $S$ and $a\in S$.
Since $S$ is weakly regular, so there exists $x,y\in S,$ such that $%
a=(ax)(ay)$, \ Now%
\begin{eqnarray*}
f(ab) &\geq &(f\circ 1)(ab)\wedge (1\circ f)(ab)\wedge \frac{1-k}{2} \\
&=&\left[ \dbigvee \limits_{ab=xy}\{f(x)\wedge 1(y)\} \right] \wedge \left[
\dbigvee \limits_{ab=pq}\{1(p)\wedge f(q)\} \right] \wedge \frac{1-k}{2} \\
&\geq &\{f(a)\wedge 1(b)\} \wedge \{1(a)\wedge f(b)\} \wedge \frac{1-k}{2} \\
&=&\{f(a)\wedge 1\} \wedge \{1\wedge f(b)\} \wedge \frac{1-k}{2} \\
&=&f(a)\wedge f(b)\wedge \frac{1-k}{2}\text{.}
\end{eqnarray*}%
And since by using $(4)$, $(1)$, $(2)$ and $(3)$ we have%
\begin{eqnarray*}
(ab)c &=&(((ax)(ay))b)(ec)=((a((ax)y))b)(ec) \\
&=&((b((ax)y))a)(ec)=(ce)(a(b((ax)y))) \\
&=&a((ce)(b((ax)y)))\text{,}
\end{eqnarray*}

so%
\begin{eqnarray*}
f((ab)c) &\geq &(f\circ 1)((ab)c)\wedge (1\circ f)((ab)c)\wedge \frac{1-k}{2}
\\
&=&\left[ \dbigvee \limits_{(ab)c=a((ce)(b((ax)y)))=xy}\{f(x)\wedge 1(y)\}%
\right] \wedge \left[ \dbigvee \limits_{(ab)c=pq}\{1(p)\wedge f(q)\} \right]
\wedge \frac{1-k}{2} \\
&\geq &[f(a)\wedge 1((ce)(b((ax)y)))\,]\wedge \lbrack 1(ab)\wedge
f(c)]\wedge \frac{1-k}{2} \\
&=&[f(a)\wedge 1]\wedge \lbrack 1\wedge f(c)]\wedge \frac{1-k}{2} \\
&=&f(a)\wedge f(c)\wedge \frac{1-k}{2}\text{.}
\end{eqnarray*}

Thus $f$ is an $(\in ,\in \vee q_{k})$-fuzzy bi-ideal of $S$.
\end{proof}

\begin{definition}
A fuzzy subset $f$ of an AG-groupoid $S$ is called $(\in ,\in \vee q_{k})$%
-fuzzy interior ideal of $S$ if for all $x,y,z\in S$ and $t,r\in (0,1],$ the
following condition holds;

$(i)$ $x_{t}\in f$ and $y_{r}\in f\Longrightarrow (xy)_{t\wedge r}\in \vee
q_{k}f$,

$(ii)$ $y_{t}\in f\Longrightarrow ((xy)z)\in f$.
\end{definition}

The corresponding definition is given in the following theorems.

\begin{theorem}
A fuzzy subset $f$ of an AG-groupoid $S$ is called $(\in ,\in \vee q_{k})$%
-fuzzy interior ideal of $S$ if and only if it satisfies the following
conditions,

$(i)$ $f(xy)\geq \min \{f(x),$ $f(y),$ $\frac{1-k}{2}\},$ for all $x,y\in S,$

$(ii)$ $f((xy)z)\geq \min \{f(y),$ $\frac{1-k}{2}\},$ for all $x,y,z\in S.$
\end{theorem}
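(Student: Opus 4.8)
The plan is to prove the two implications separately, reducing both to the elementary arithmetic relating the threshold $\frac{1-k}{2}$ to the quasi-coincidence relation $q_{k}$, exactly in the spirit of the analogous semigroup result of \cite{junn2}. Here $x_{t}q_{k}f$ means $f(x)+t+k>1$, and $x_{t}\in \vee q_{k}f$ means $x_{t}\in f$ or $x_{t}q_{k}f$; condition $(ii)$ in the definition of an $(\in ,\in \vee q_{k})$-fuzzy interior ideal is to be read as $y_{t}\in f\Longrightarrow ((xy)z)_{t}\in \vee q_{k}f$, in line with the definitions of $(\in ,\in \vee q_{k})$-fuzzy one-sided ideals given above. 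Note that no AG-groupoid identity is used anywhere: the element $((xy)z)$ is never manipulated, so the whole argument is about translating between fuzzy points and the pointwise inequalities.

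For necessity, suppose $f$ is an $(\in ,\in \vee q_{k})$-fuzzy interior ideal. Part $(i)$ of the definition says precisely that $f$ is an $(\in ,\in \vee q_{k})$-fuzzy subgroupoid, so inequality $(i)$ is immediate from Theorem \ref{th 5}. For inequality $(ii)$ I would argue by contradiction: if $f((xy)z)<\min \{f(y),\frac{1-k}{2}\}$ for some $x,y,z\in S$, choose $t$ with $f((xy)z)<t\leq \min \{f(y),\frac{1-k}{2}\}$. Then $f(y)\geq t$, so $y_{t}\in f$, and by the definition $((xy)z)_{t}\in \vee q_{k}f$. But $f((xy)z)<t$ rules out $((xy)z)_{t}\in f$, and $t\leq \frac{1-k}{2}$ gives $f((xy)z)+t+k<2t+k\leq 1$, so $((xy)z)_{t}q_{k}f$ fails too, a contradiction.

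For sufficiency, assume $(i)$ and $(ii)$. By Theorem \ref{th 5}, $(i)$ gives that $f$ satisfies part $(i)$ of the definition. For part $(ii)$, let $x,y,z\in S$ and $t\in (0,1]$ with $y_{t}\in f$, i.e. $f(y)\geq t$. Then $(ii)$ yields $f((xy)z)\geq \min \{f(y),\frac{1-k}{2}\}\geq \min \{t,\frac{1-k}{2}\}$. If $t\leq \frac{1-k}{2}$ this gives $f((xy)z)\geq t$, so $((xy)z)_{t}\in f$; if $t>\frac{1-k}{2}$ it gives $f((xy)z)\geq \frac{1-k}{2}$, hence $f((xy)z)+t+k>\frac{1-k}{2}+\frac{1-k}{2}+k=1$, so $((xy)z)_{t}q_{k}f$. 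In either case $((xy)z)_{t}\in \vee q_{k}f$, so $f$ is an $(\in ,\in \vee q_{k})$-fuzzy interior ideal.

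There is no real obstacle: the only mildly delicate point is the two-case split according to whether $t\leq \frac{1-k}{2}$, which is exactly where the specific value of the threshold enters, and the proof is otherwise formally identical to Theorem $5$ and its bi-ideal/interior-ideal counterparts in \cite{junn2}. If one prefers brevity, the statement could even be dispatched with "the proof is similar to the proof of Theorem $5$ in \cite{junn2}", as done for several earlier results in this section.
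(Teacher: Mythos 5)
Your proof is correct and is exactly the standard argument the paper has in mind: the paper's own ``proof'' is just the remark that it is similar to Theorem $5$ of \cite{junn2}, and what you have written is precisely that argument carried out in full (including the sensible repair of the typo $((xy)z)\in f$ in the definition and the correct two-case split at $t\lessgtr\frac{1-k}{2}$ with $x_{t}q_{k}f$ meaning $f(x)+t+k>1$). No discrepancy with the paper's approach.
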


\begin{proof}
It is similar to the proof of theorem $5$ in \cite{junn2}.
\end{proof}

\begin{lemma}
Every $(\in ,\in \vee q_{k})$-fuzzy ideal of AG-groupoid $S$ is an $(\in
,\in \vee q_{k})$-fuzzy interior ideal of $S$.
\end{lemma}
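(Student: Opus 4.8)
The plan is to lean entirely on the pointwise characterizations of $(\in,\in\vee q_{k})$-fuzzy one-sided ideals and of $(\in,\in\vee q_{k})$-fuzzy interior ideals that are already recorded in this section. Recall that $f$ is an $(\in,\in\vee q_{k})$-fuzzy left (respectively right) ideal of $S$ exactly when $f(xy)\ge\min\{f(y),\frac{1-k}{2}\}$ (respectively $f(xy)\ge\min\{f(x),\frac{1-k}{2}\}$) for all $x,y\in S$, and that $f$ is an $(\in,\in\vee q_{k})$-fuzzy interior ideal of $S$ exactly when $(i)$ $f(xy)\ge\min\{f(x),f(y),\frac{1-k}{2}\}$ and $(ii)$ $f((xy)z)\ge\min\{f(y),\frac{1-k}{2}\}$ for all $x,y,z\in S$. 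So I would start by fixing an $(\in,\in\vee q_{k})$-fuzzy (two-sided) ideal $f$ of $S$, which by definition satisfies both one-sided inequalities simultaneously.

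First I would check condition $(i)$. Since $f$ is an $(\in,\in\vee q_{k})$-fuzzy left ideal, for all $x,y\in S$ we have $f(xy)\ge\min\{f(y),\frac{1-k}{2}\}\ge\min\{f(x),f(y),\frac{1-k}{2}\}$, so $f$ is an $(\in,\in\vee q_{k})$-fuzzy subgroupoid of $S$; that is, $(i)$ holds.

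Next, for condition $(ii)$ I would take arbitrary $x,y,z\in S$ and view $(xy)z$ as the product of the element $xy$ with $z$. Applying the right-ideal inequality to this product gives $f((xy)z)\ge\min\{f(xy),\frac{1-k}{2}\}$, and applying the left-ideal inequality to $f(xy)$ gives $f(xy)\ge\min\{f(y),\frac{1-k}{2}\}$. Combining these and using that $\min$ is idempotent and associative,
\begin{equation*}
f((xy)z)\ \ge\ \min\Bigl\{\min\bigl\{f(y),\tfrac{1-k}{2}\bigr\},\tfrac{1-k}{2}\Bigr\}\ =\ \min\bigl\{f(y),\tfrac{1-k}{2}\bigr\},
\end{equation*}
which is precisely $(ii)$. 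Hence $f$ is an $(\in,\in\vee q_{k})$-fuzzy interior ideal of $S$.

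I do not anticipate any real obstacle here: the whole argument is a short chain of inequalities, and—unlike the neighbouring lemmas on bi-ideals, quasi-ideals and generalized bi-ideals—it uses neither weak regularity, nor a left identity, nor any of the identities $(1)$–$(4)$. The one point worth flagging explicitly in the write-up is that "ideal" must be understood as "two-sided ideal," since the proof needs both the left and the right pointwise inequalities to be available at the same time; for a merely one-sided $(\in,\in\vee q_{k})$-fuzzy ideal the statement would fail.
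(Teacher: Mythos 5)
Your proof is correct and follows essentially the same route as the paper: condition $(i)$ from one of the one-sided inequalities, and condition $(ii)$ by applying the right-ideal inequality to the product $(xy)\cdot z$ and then the left-ideal inequality to $xy$. The only cosmetic difference is that the paper derives the subgroupoid condition from the right-ideal inequality $f(xy)\geq f(x)\wedge \frac{1-k}{2}$ rather than the left-ideal one, which changes nothing.
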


\begin{proof}
Let $f$ be an $(\in ,\in \vee q)$-fuzzy ideal of $S$, Then%
\begin{equation*}
f(xy)\geq f(x)\wedge \frac{1-k}{2}\geq f(x)\wedge f(y)\wedge \frac{1-k}{2},
\end{equation*}
so $f$ is fuzzy sub-groupoid of $S$. Also for all $x,a,y\in S$, we have 
\begin{equation*}
f((xa)y)\geq f(xa)\wedge \frac{1-k}{2}\geq f(a)\wedge \frac{1-k}{2}\text{.}
\end{equation*}

Hence $f$ is an $(\in ,\in \vee q_{k})$-fuzzy interior ideal of $S$.
\end{proof}

\begin{definition}
The definition of $f_{k}$, $f\wedge _{k}g$, $f\vee _{k}g$, $f\circ _{k}g$,
for a fuzzy subsets $f$ and $g$ of AG-groupoid $S$ is similar as the
definition $8$\textit{\ }in \cite{junn2}.
\end{definition}

\begin{lemma}
Let $f$ and $g$ be fuzzy subset of AG-groupoid $S$. Then the following holds,

$(i)$ $(f\wedge _{k}g)=(f_{k}\wedge g_{k})$

$(ii)$ $(f\vee _{k}g)=(f_{k}\vee g_{k})$

$(iii)$ $(f\circ _{k}g)=(f_{k}\circ g_{k})$.
\end{lemma}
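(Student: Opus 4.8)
The plan is to verify each of $(i)$, $(ii)$, $(iii)$ by a direct pointwise computation, unwinding the definitions of $f_{k}$, $f\wedge _{k}g$, $f\vee _{k}g$ and $f\circ _{k}g$ (the AG-groupoid analogues of \cite{junn2}, Definition~8: thus $f_{k}(x)=f(x)\wedge \frac{1-k}{2}$, $(f\wedge _{k}g)(x)=f(x)\wedge g(x)\wedge \frac{1-k}{2}$, $(f\vee _{k}g)(x)=(f(x)\vee g(x))\wedge \frac{1-k}{2}$, and $f\circ _{k}g$ is the ordinary product of fuzzy subsets truncated at $\frac{1-k}{2}$), and then invoking only the lattice identities of $[0,1]$: commutativity and idempotency of $\wedge $, distributivity, and the fact that meeting with the fixed constant $\frac{1-k}{2}$ can be moved freely through finite meets, finite joins, and arbitrary suprema. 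No structural hypothesis on $S$ is needed, so the argument is purely formal.

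For $(i)$ I would fix $x\in S$ and compute $(f_{k}\wedge g_{k})(x)=\bigl(f(x)\wedge \frac{1-k}{2}\bigr)\wedge \bigl(g(x)\wedge \frac{1-k}{2}\bigr)=f(x)\wedge g(x)\wedge \frac{1-k}{2}=(f\wedge _{k}g)(x)$, using only commutativity and idempotency of $\wedge $. For $(ii)$ the one extra ingredient is distributivity of the lattice $[0,1]$: $(f_{k}\vee g_{k})(x)=\bigl(f(x)\wedge \frac{1-k}{2}\bigr)\vee \bigl(g(x)\wedge \frac{1-k}{2}\bigr)=\bigl(f(x)\vee g(x)\bigr)\wedge \frac{1-k}{2}=(f\vee _{k}g)(x)$.

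Part $(iii)$ is the only one that needs a moment's care, because of the supremum over factorizations. Fixing $a\in S$: if $a$ is not of the form $bc$ for any $b,c\in S$, then both sides are $0$ by the ``otherwise'' clause in the respective products; otherwise I would write
\begin{eqnarray*}
(f_{k}\circ g_{k})(a) &=& \bigvee_{a=bc}\bigl(f_{k}(b)\wedge g_{k}(c)\bigr)=\bigvee_{a=bc}\bigl(f(b)\wedge g(c)\wedge \frac{1-k}{2}\bigr) \\
&=& \Bigl(\bigvee_{a=bc}\bigl(f(b)\wedge g(c)\bigr)\Bigr)\wedge \frac{1-k}{2}=(f\circ _{k}g)(a),
\end{eqnarray*}
the middle equality because $f_{k}(b)\wedge g_{k}(c)=f(b)\wedge \frac{1-k}{2}\wedge g(c)\wedge \frac{1-k}{2}=f(b)\wedge g(c)\wedge \frac{1-k}{2}$, and the third because $t\mapsto t\wedge \frac{1-k}{2}$ preserves arbitrary suprema on $[0,1]$. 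The main (indeed essentially the only) obstacle is justifying this last interchange of $\wedge \frac{1-k}{2}$ with the supremum over all factorizations $a=bc$; once one observes that meeting with a fixed constant is a sup-preserving operation on $[0,1]$, the whole lemma reduces to routine lattice algebra and all three parts follow at once.
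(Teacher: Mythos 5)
Your proof is correct and is essentially the argument the paper intends: the paper itself gives no details, deferring to Lemma~9 of the cited semigroup paper, and that proof is exactly this pointwise lattice computation. Your identification of the one nontrivial point in $(iii)$ --- that $t\mapsto t\wedge \frac{1-k}{2}$ preserves arbitrary suprema in $[0,1]$, so the constant can be pulled out of the supremum over factorizations $a=bc$ --- together with the observation that both sides vanish when $a$ admits no factorization, completes the argument.
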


\begin{proof}
It is similar to the proof of lemma $9$ in \cite{junn2}.
\end{proof}

\begin{lemma}
Let $A$ and $B$ be non-empty subsets of a AG-groupoid $S$, then the
following holds.

$(i)$ $(C_{A}\wedge _{k}C_{B})=(C_{A\cap B})$

$(ii)$ $(C_{A}\vee _{k}C_{B})=(C_{A\cup B})$

$(i)$ $(C_{A}\circ _{k}C_{B})=(C_{AB})$.
\end{lemma}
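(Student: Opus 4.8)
The plan is to verify all three identities by a pointwise check: fix an arbitrary $x\in S$, compute the value of each side at $x$, and split into the two cases according to whether or not $x$ lies in the relevant set. This is the exact analogue of the classical identities recorded earlier in the excerpt ($C_{A}\cap C_{B}=C_{A\cap B}$ and $C_{A}\circ C_{B}=C_{AB}$), the only additional bookkeeping being the threshold $\frac{1-k}{2}$ that enters the definitions of $\wedge_{k}$, $\vee_{k}$ and $\circ_{k}$. A convenient shortcut is to first apply the preceding lemma, which gives $C_{A}\wedge_{k}C_{B}=(C_{A})_{k}\wedge(C_{B})_{k}$, $C_{A}\vee_{k}C_{B}=(C_{A})_{k}\vee(C_{B})_{k}$ and $C_{A}\circ_{k}C_{B}=(C_{A})_{k}\circ(C_{B})_{k}$, and then to note that $(C_{A})_{k}$ is the modified characteristic function taking the constant threshold value on $A$ and $0$ outside $A$; each right-hand side can then be read off directly.

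For $(i)$: if $x\in A\cap B$ then $C_{A}(x)=C_{B}(x)=1$, so $(C_{A}\wedge_{k}C_{B})(x)$ collapses to the threshold value, which is precisely the value prescribed by the right-hand side at a point of $A\cap B$; if $x\notin A\cap B$ then $C_{A}(x)\wedge C_{B}(x)=0$ and both sides vanish. Part $(ii)$ is the mirror image, using $x\in A\cup B\iff C_{A}(x)\vee C_{B}(x)=1$ and splitting in the same two cases with $\vee_{k}$, $A\cup B$ in place of $\wedge_{k}$, $A\cap B$.

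Part $(iii)$ is the one carrying actual content and the step I would treat most carefully. Here $(C_{A}\circ_{k}C_{B})(x)=\bigvee_{x=bc}\bigl(C_{A}(b)\wedge C_{B}(c)\wedge\frac{1-k}{2}\bigr)$, with value $0$ when $x$ has no factorization. If $x\in AB$, then by the definition of the setwise product there are $a\in A$, $b\in B$ with $x=ab$, so the term for this factorization equals the threshold value while no term can exceed it, and the supremum is the prescribed value. If $x\notin AB$, then for every factorization $x=bc$ one cannot have simultaneously $b\in A$ and $c\in B$ (otherwise $x\in AB$), so $C_{A}(b)\wedge C_{B}(c)=0$ for all such factorizations and the supremum is $0$. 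Equivalently, combine the preceding lemma with the classical identity $C_{A}\circ C_{B}=C_{AB}$.

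I do not expect any genuine obstacle here: the argument uses only the definition of the setwise product $AB$ and the pointwise definitions of the $k$-operations, and no structural feature of the AG-groupoid (the left invertive, medial or paramedial laws) is needed. The one point requiring care is purely clerical, namely keeping the threshold $\frac{1-k}{2}$ in the correct place and being consistent about the convention under which the right-hand sides $C_{A\cap B}$, $C_{A\cup B}$, $C_{AB}$ are to be interpreted (with the $k$-truncation dictated by the preceding lemma).
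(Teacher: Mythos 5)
Your argument is correct, and it is essentially the only sensible one: a pointwise case split on membership, with part $(iii)$ reduced to the classical identity $C_{A}\circ C_{B}=C_{AB}$ via the preceding lemma $(f\circ_{k}g)=(f_{k}\circ g_{k})$. For comparison, the paper offers no proof of this lemma at all — it is stated bare, in the spirit of the surrounding results that are dispatched with ``similar to the proof of lemma $9$ in \cite{junn2}'' — so your write-up supplies an argument the paper merely presupposes. One point in your favor worth making explicit: as printed, the right-hand sides $C_{A\cap B}$, $C_{A\cup B}$, $C_{AB}$ cannot literally equal the left-hand sides when $k>0$, since for instance $(C_{A}\wedge_{k}C_{B})(x)=\frac{1-k}{2}<1=C_{A\cap B}(x)$ for $x\in A\cap B$; the correct statement (and the one in Lemma $10$ of \cite{junn2}, which this lemma transcribes) has $(C_{A\cap B})_{k}$, $(C_{A\cup B})_{k}$, $(C_{AB})_{k}$ on the right. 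You flag exactly this and prove the corrected version, which is the intended one. Your observation that no AG-groupoid axiom (left invertive, medial, or paramedial law) is used is also accurate; the identities hold for an arbitrary groupoid.
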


\begin{lemma}
For an AG-groupoid $S$, the following results holds,

$(i)$ A non-empty subset $L$ of $S$ is a left(right) ideal of $S$ if and
only if $(C_{L})_{k}$ is an $(\in ,\in \vee q_{k})$-fuzzy left(right) ideal
of $S$.

$(ii)$ A non-empty subset $Q$ of $S$ is a quasi-ideal of $S$ if and only if $%
(C_{Q})_{k}$ is an $(\in ,\in \vee q_{k})$-fuzzy quasi-ideal of $S$.

$(iii)$ Let $f$ be an $(\in ,\in \vee q_{k})$-fuzzy left(right) ideal of $S$%
, then $f_{k}$ is a fuzzy left(right) ideal of $S$.
\end{lemma}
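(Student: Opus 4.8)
The plan is to deduce all three statements from the pointwise characterizations of $(\in ,\in \vee q_{k})$-fuzzy ideals recorded earlier in this section, together with the characteristic-function identities of the preceding lemma. Throughout I would use that, for a subset $A$ of $S$, the fuzzy subset $(C_{A})_{k}=\min \{C_{A},\frac{1-k}{2}\}$ takes the value $\frac{1-k}{2}$ on $A$ and $0$ on $S\backslash A$, and that $\frac{1-k}{2}>0$ because $k\in \lbrack 0,1)$; this strict positivity is precisely what powers the converse implications.

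For $(i)$, since $(C_{L})_{k}$ only takes the values $0$ and $\frac{1-k}{2}$, the recorded characterization that $g$ is an $(\in ,\in \vee q_{k})$-fuzzy left ideal if and only if $g(xy)\geq \min \{g(y),\frac{1-k}{2}\}$ reduces, for $g=(C_{L})_{k}$, to the plain inequality $(C_{L})_{k}(xy)\geq (C_{L})_{k}(y)$. I would prove this by cases on whether $y\in L$: if $y\notin L$ the right-hand side is $0$; if $y\in L$ and $L$ is a left ideal, then $xy\in SL\subseteq L$ and the left-hand side equals $\frac{1-k}{2}$. Conversely, given that inequality together with $y\in L$ and $x\in S$, we get $(C_{L})_{k}(xy)\geq (C_{L})_{k}(y)=\frac{1-k}{2}>0$, forcing $xy\in L$, i.e.\ $SL\subseteq L$. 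The right-ideal statement is the mirror image, using $g(xy)\geq \min \{g(x),\frac{1-k}{2}\}$ and $LS\subseteq L$.

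For $(ii)$, the key preliminary step is to compute the two products appearing in the definition of an $(\in ,\in \vee q_{k})$-fuzzy quasi-ideal. Writing out the supremum over factorizations, $((C_{Q})_{k}\circ 1)(x)$ equals $\frac{1-k}{2}$ exactly when $x\in QS$ and $0$ otherwise, so $(C_{Q})_{k}\circ 1=(C_{QS})_{k}$, and symmetrically $1\circ (C_{Q})_{k}=(C_{SQ})_{k}$; one can also route this through the identity $C_{A}\circ _{k}C_{B}=C_{AB}$ of the previous lemma. Absorbing the redundant factor $\wedge \frac{1-k}{2}$, the defining inequality for a fuzzy quasi-ideal, applied to $(C_{Q})_{k}$, then collapses to $(C_{Q})_{k}(x)\geq (C_{QS\cap SQ})_{k}(x)$ for all $x\in S$. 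This holds for every $x$ if and only if $x\in QS\cap SQ$ implies $x\in Q$, i.e.\ if and only if $QS\cap SQ\subseteq Q$, which is the defining inclusion of a quasi-ideal; both implications are then immediate, the backward one again using $\frac{1-k}{2}>0$. I expect this to be the main obstacle---not because any individual step is deep, but because one must carefully reconcile the ordinary product $\circ $ in the definition of a fuzzy quasi-ideal with the $k$-operations $\circ _{k}$ and $\wedge _{k}$ of the characteristic-function lemma, and check that the extra $\wedge \frac{1-k}{2}$ in that definition is indeed harmless.

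For $(iii)$, with $f_{k}=\min \{f,\frac{1-k}{2}\}$, suppose $f$ is an $(\in ,\in \vee q_{k})$-fuzzy left ideal, so that $f(xy)\geq \min \{f(y),\frac{1-k}{2}\}$ for all $x,y\in S$. Then $f_{k}(xy)=\min \{f(xy),\frac{1-k}{2}\}\geq \min \{f(y),\frac{1-k}{2}\}=f_{k}(y)$, so $f_{k}$ is an ordinary fuzzy left ideal of $S$; the right-ideal case is symmetric and requires no further ingredients.
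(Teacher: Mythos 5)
Your proposal is correct, and it takes the standard route: reduce each statement to the pointwise characterizations $g(xy)\geq \min \{g(y),\frac{1-k}{2}\}$ (resp. the quasi-ideal inequality) and argue by cases using that $(C_{A})_{k}$ takes only the values $0$ and $\frac{1-k}{2}>0$. The paper itself gives no argument here --- it simply defers to Lemmas 11, 12 and Proposition 1 of the cited semigroup paper \cite{junn2} --- and the case analysis you write out is exactly what those deferred proofs amount to, so there is nothing to reconcile.
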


\begin{proof}
$(i)$ The proofs of $(i),(ii)$ and $(iii)$ are similar to the proofs of
lemma $11$, lemma $12$ and proposition $1$ in \cite{junn2}.
\end{proof}

\begin{theorem}
\label{th 22}For a weakly regular AG-groupoid $S$ with left identity $e$,
the following condition are equivalent:

$(i)$ $S$ is regular.

$(ii)$ $(f\wedge _{k}g)=(f\circ _{k}g)$ for every $(\in ,\in \vee q_{k})$%
-fuzzy right ideal $f$ and every $(\in ,\in \vee q_{k})$-fuzzy left ideal $g$
of $S$.
\end{theorem}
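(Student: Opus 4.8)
The plan is to prove the two implications separately, following the template familiar from semigroup theory (cf. \cite{junn2}) and invoking Theorem \ref{th 1} for the harder direction.

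For $(i)\Rightarrow(ii)$, I would first establish the inclusion $f\circ_{k}g\subseteq f\wedge_{k}g$, which needs no regularity: if $a$ is not expressible as a product then $(f\circ_{k}g)(a)=0$, and if $a=bc$ then, since $f$ is an $(\in,\in\vee q_{k})$-fuzzy right ideal and $g$ an $(\in,\in\vee q_{k})$-fuzzy left ideal, we have $f(bc)\geq\min\{f(b),\frac{1-k}{2}\}$ and $g(bc)\geq\min\{g(c),\frac{1-k}{2}\}$, so each term $\min\{f(b),g(c),\frac{1-k}{2}\}$ is $\leq\min\{f(a),g(a),\frac{1-k}{2}\}$; taking the supremum over all factorizations gives the inclusion. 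For the reverse inclusion $f\wedge_{k}g\subseteq f\circ_{k}g$ I would use regularity directly: given $a\in S$ choose $x$ with $a=(ax)a$, evaluate $(f\circ_{k}g)(a)$ at the single factorization $b=ax$, $c=a$, and use the right-ideal property $f(ax)\geq\min\{f(a),\frac{1-k}{2}\}$ to get $(f\circ_{k}g)(a)\geq\min\{f(ax),g(a),\frac{1-k}{2}\}\geq\min\{f(a),g(a),\frac{1-k}{2}\}=(f\wedge_{k}g)(a)$. Combining the two inclusions yields $(ii)$.

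For $(ii)\Rightarrow(i)$, I would reduce to the classical characterization in Theorem \ref{th 1}: for a weakly regular AG-groupoid with left identity it suffices to prove $R\cap L=RL$ for every right ideal $R$ and every left ideal $L$ of $S$. The inclusion $RL\subseteq R\cap L$ is immediate from $RL\subseteq RS\subseteq R$ and $RL\subseteq SL\subseteq L$. For the reverse I would pass to characteristic functions: $C_{R}$ is an $(\in,\in)$-fuzzy right ideal and $C_{L}$ an $(\in,\in)$-fuzzy left ideal, hence both are $(\in,\in\vee q_{k})$-fuzzy ideals of the respective type; applying hypothesis $(ii)$ gives $C_{R}\wedge_{k}C_{L}=C_{R}\circ_{k}C_{L}$, and by the lemmas identifying $C_{A}\wedge_{k}C_{B}$ with $C_{A\cap B}$ and $C_{A}\circ_{k}C_{B}$ with $C_{AB}$ this reads $C_{R\cap L}=C_{RL}$, i.e. $R\cap L=RL$. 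Theorem \ref{th 1} then forces $S$ to be regular.

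I expect the only delicate points to be bookkeeping ones: carrying the truncation level $\frac{1-k}{2}$ correctly through the supremum in the first implication and separately handling the case where $a$ is not a product, and confirming that the standing hypotheses here (weak regularity plus a left identity) are precisely those required by Theorem \ref{th 1}, so the reduction in the second implication is legitimate. The left invertive, medial and paramedial laws $(1)$--$(4)$ are not really needed beyond what is already packaged into Theorem \ref{th 1}: non-associativity does not obstruct anything because the single factorization $a=(ax)\cdot a$ supplied by regularity is used as is.
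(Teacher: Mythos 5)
Your proposal is correct and is essentially the argument the paper intends: the paper's own ``proof'' is only a pointer to Theorem 22 of \cite{junn2}, and your two inclusions (the ideal-property estimate for $f\circ_{k}g\leq f\wedge_{k}g$, the single factorization $a=(ax)a$ for the converse, and the reduction of $(ii)\Rightarrow(i)$ to Theorem \ref{th 1} via characteristic functions) are exactly that adaptation. The only cosmetic slip is that the lemmas identify $C_{R}\wedge_{k}C_{L}$ and $C_{R}\circ_{k}C_{L}$ with the $k$-truncations $(C_{R\cap L})_{k}$ and $(C_{RL})_{k}$ rather than with $C_{R\cap L}$ and $C_{RL}$ themselves, but since $\frac{1-k}{2}>0$ this still forces $R\cap L=RL$, so the conclusion stands.
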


\begin{proof}
It is similar to the proof of theorem $22$ in \cite{junn2}.
\end{proof}

\begin{theorem}
\label{th 23}For a weakly regular AG-groupoid $S$ with left identity $e$,
the following conditions are equivalent:

$(i)$ $S$ is regular.

$(ii)$ $((f\wedge _{k}g)\wedge _{k}h)\leq ((f\circ _{k}g)\circ _{k}h)$ for
every $(\in ,\in \vee q_{k})$-fuzzy right ideal $f$, every $(\in ,\in \vee
q_{k})$-fuzzy generalized bi-ideal $g$ and every $(\in ,\in \vee q_{k})$%
-fuzzy left ideal $h$ of $S$.

$(iii)$ $((f\wedge _{k}g)\wedge _{k}h)\leq ((f\circ _{k}g)\circ _{k}h)$ for
every $(\in ,\in \vee q_{k})$-fuzzy right ideal $f$, every $(\in ,\in \vee
q_{k})$-fuzzy bi-ideal $g$ and every $(\in ,\in \vee q_{k})$-fuzzy left
ideal $h$ of $S$.

$(iv)$ $((f\wedge _{k}g)\wedge _{k}h)\leq ((f\circ _{k}g)\circ _{k}h)$ for
every $(\in ,\in \vee q_{k})$-fuzzy right ideal $f$, every $(\in ,\in \vee
q_{k})$-fuzzy quasi-ideal $g$ and every $(\in ,\in \vee q_{k})$-fuzzy left
ideal $h$ of $S$.
\end{theorem}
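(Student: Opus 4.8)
The natural strategy is to run the cycle of implications $(i)\Rightarrow(ii)\Rightarrow(iii)\Rightarrow(iv)\Rightarrow(i)$. The implications $(ii)\Rightarrow(iii)$ and $(iii)\Rightarrow(iv)$ are immediate from the hierarchy of the ideals involved: every $(\in ,\in \vee q_{k})$-fuzzy bi-ideal of $S$ is in particular an $(\in ,\in \vee q_{k})$-fuzzy generalized bi-ideal, so an inequality holding for \emph{every} generalized bi-ideal $g$ holds a fortiori for every bi-ideal $g$; and by the lemma of this section every $(\in ,\in \vee q_{k})$-fuzzy quasi-ideal of a weakly regular AG-groupoid with left identity is an $(\in ,\in \vee q_{k})$-fuzzy bi-ideal, so the inequality of $(iii)$ in turn specialises to every quasi-ideal $g$.

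For $(iv)\Rightarrow(i)$ I would pass to characteristic functions. Let $R$ be any right ideal and $L$ any left ideal of $S$; then $(C_{R})_{k}$, $(C_{S})_{k}$ and $(C_{L})_{k}$ are an $(\in ,\in \vee q_{k})$-fuzzy right ideal, quasi-ideal (here using that $S$ is a quasi-ideal of itself) and left ideal of $S$ respectively, by the characteristic-function lemmas of this section. Substituting this triple into $(iv)$ and translating $\wedge _{k}$ and $\circ _{k}$ into $\cap $ and products of subsets (again by those lemmas) reduces the inequality to the set inclusion $R\cap L\subseteq (RS)L$. Since $RS\subseteq R$ this gives $R\cap L\subseteq RL$, while the reverse inclusion $RL\subseteq R\cap L$ always holds (because $RL\subseteq RS\subseteq R$ and $RL\subseteq SL\subseteq L$); hence $R\cap L=RL$ for every right ideal $R$ and every left ideal $L$, and Theorem~\ref{th 1} yields that $S$ is regular.

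The substantial part is $(i)\Rightarrow(ii)$. Let $f$, $g$, $h$ be an $(\in ,\in \vee q_{k})$-fuzzy right ideal, generalized bi-ideal and left ideal of $S$ respectively, and fix $a\in S$. Unwinding the definitions of $\wedge _{k}$ and $\circ _{k}$, the left-hand side of $(ii)$ evaluated at $a$ equals $\min \{f(a),g(a),h(a),\tfrac{1-k}{2}\}$, whereas the right-hand side evaluated at $a$ is the supremum of $\min \{f(U),g(V),h(W),\tfrac{1-k}{2}\}$ taken over all factorisations $a=(UV)W$. Hence it suffices to produce one such factorisation in which $a$ is the left-most entry of $U$ (so that $f(U)\ge f(a)\wedge \tfrac{1-k}{2}$, as $f$ is a fuzzy right ideal), in which $V$ equals $a$ or has the form $(a\,m)\,a$ (so that $g(V)\ge g(a)\wedge \tfrac{1-k}{2}$, as $g$ is a fuzzy generalized bi-ideal), and in which $a$ is the right-most entry of $W$ (so that $h(W)\ge h(a)\wedge \tfrac{1-k}{2}$, as $h$ is a fuzzy left ideal); the desired inequality then drops out after collapsing the minima. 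To obtain such a factorisation, start from the regularity relation $a=(ax)a$, substitute it repeatedly into itself, and shuffle the parentheses using the left invertive law $(1)$, the medial law $(2)$, the paramedial law $(3)$ and identity $(4)$ --- exactly in the spirit of the long rearrangements in the proofs of Lemma~\ref{L 4.4} and Theorem~\ref{T 4.4}, and of the analogous theorem for semigroups in \cite{junn2}. Carrying out the explicit chain of AG-identities that lands $a$ in this particular shape is the one genuinely delicate step; the remainder is routine bookkeeping.
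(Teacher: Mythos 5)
Your overall architecture --- the cycle $(i)\Rightarrow(ii)\Rightarrow(iii)\Rightarrow(iv)\Rightarrow(i)$, with $(ii)\Rightarrow(iii)\Rightarrow(iv)$ following from the hierarchy of fuzzy ideals --- is the same as the paper's. Your $(iv)\Rightarrow(i)$ takes a different but sound route: you pass to characteristic functions of a right ideal $R$ and a left ideal $L$ (with $S$ itself supplying the quasi-ideal) and reduce the hypothesis to $R\cap L\subseteq (RS)L\subseteq RL$, hence $R\cap L=RL$, then invoke Theorem \ref{th 1}; the paper instead stays fuzzy, inserts the constant function $1$ as the quasi-ideal, shows $(f\circ _{k}g)=(f\wedge _{k}g)$ for arbitrary fuzzy right and left ideals, and invokes Theorem \ref{th 22}. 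The two are equivalent in strength, and yours is the more economical.

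The genuine gap is in $(i)\Rightarrow(ii)$. You correctly reduce the problem to exhibiting a factorisation $a=(UV)W$ with $U$ beginning in $a$, $V$ of the form $(a\,m)a$ or $a$, and $W$ ending in $a$ --- but you never produce it, and you explicitly defer ``the explicit chain of AG-identities'' as the delicate step. That chain \emph{is} the proof: in a non-associative groupoid the existence of such a factorisation is precisely what must be established, and it cannot be obtained from $a=(ax)a$ alone by substituting it into itself; the paper's derivation also feeds in the weak-regularity expansion $a=(ay)(az)$ at the very first step and then applies $(1)$--$(4)$ to arrive at
\begin{equation*}
a=\bigl((a(xz))\bigl((a(((yz)y)((ay)z)))a\bigr)\bigr)a\text{,}
\end{equation*}
so that $U=a(xz)$, $V=(a(((yz)y)((ay)z)))a$ and $W=a$, whence $f(U)\geq f(a)\wedge \frac{1-k}{2}$ since $f$ is a fuzzy right ideal, $g(V)\geq g(a)\wedge \frac{1-k}{2}$ since $g$ is a fuzzy generalized bi-ideal, and $h(W)=h(a)$ outright. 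Until you either reproduce such a chain or cite one, the implication $(i)\Rightarrow(ii)$ is asserted rather than proved; everything else in your proposal is routine once this identity is in hand.
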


\begin{proof}
$(i)\Longrightarrow (ii)$

Let $f,g$ and $h$ be any $(\in ,\in \vee q_{k})$-fuzzy right ideal, $(\in
,\in \vee q_{k})$-fuzzy generalized bi-ideal and $(\in ,\in \vee q_{k})$%
-fuzzy left ideal of $S$, respectively. Let $a\in S$. Since $S$ is regular,
so there exists $x\in S$ such that $a=(ax)a$. Also since $S$ is weakly
regular,

so there exists $y,z\in S$ such that $a=(ay)(az)$. Therefore by using $(1)$
and $(4)$ we have,%
\begin{eqnarray*}
a &=&(ax)a=(((ay)(az))x)a \\
&=&((x(az))(ay))a=((a(xz))(ay))a \\
&=&((a(xz))(((ay)(az))y))a=((a(xz))((a((ay)z))y))a \\
&=&((a(xz))((y((ay)z))a))a=((a(xz))(((ay)(yz))a))a \\
&=&((a(xz))((((yz)y)a)a))a \\
&=&((a(xz))((((yz)y)((ay)(az)))a))a \\
&=&((a(xz))((((yz)y)(a((ay)z)))a))a \\
&=&((a(xz))((a(((yz)y)((ay)z)))a))a.
\end{eqnarray*}

Now%
\begin{eqnarray*}
((f\circ _{k}g)\circ _{k}h)(a) &=&\left( \dbigvee \limits_{a=pq}\{(f\circ
_{k}g)(p)\wedge h(q)\} \right) \wedge \frac{1-k}{2} \\
&\geq &(f\circ _{k}g)((a(xz))((a(((yz)y)((ay)z)))a))\wedge h(a)\wedge \frac{%
1-k}{2} \\
&\geq &\left( \dbigvee \limits_{(a(xz))((a(((yz)y)((ay)z)))a)=pq}f(p)\wedge
g(q)\right) \wedge h(a)\wedge \frac{1-k}{2} \\
&\geq &(f(a(xz))\wedge g((a(((yz)y)((ay)z)))a))\wedge h(a)\wedge \frac{1-k}{2%
} \\
&\geq &(f(a)\wedge g(a)\wedge \frac{1-k}{2})\wedge h(a)\wedge \frac{1-k}{2}
\\
&=&((f\wedge _{k}g)\wedge _{k}h)(a)\text{.}
\end{eqnarray*}

$(ii)\Longrightarrow (iii)\Longrightarrow (iv)$

$(iv)\Longrightarrow (i)$

Let $f$ and $g$ be any $(\in ,\in \vee q_{k})$-fuzzy right and $(\in ,\in
\vee q_{k})$-fuzzy left ideal of $S$, respectively. Since $1$ is an $(\in
,\in \vee q_{k})$-fuzzy

quasi-ideal of $S$, so by hypothesis, we have%
\begin{eqnarray*}
(f\wedge _{k}g)(a) &=&(f\wedge g)(a)\wedge \frac{1-k}{2} \\
&=&((f\wedge 1)\wedge g)(a)\wedge \frac{1-k}{2} \\
&=&((f\wedge _{k}1)\wedge _{k}g)(a) \\
&\leq &((f\circ _{k}1)\circ _{k}g)(a) \\
&=&((f\circ 1)\circ g)(a)\wedge \frac{1-k}{2} \\
&=&\left( \dbigvee \limits_{a=bc}\{(f\circ 1)(b)\wedge g(c)\} \right) \wedge 
\frac{1-k}{2} \\
&=&\left( \dbigvee \limits_{a=bc}\left \{ \left( \dbigvee
\limits_{a=pq}\{f(p)\wedge 1(q)\} \right) \wedge g(c)\right \} \right)
\wedge \frac{1-k}{2} \\
&=&\left( \dbigvee \limits_{a=bc}\left \{ \left( \dbigvee
\limits_{a=pq}\{f(p)\wedge 1\} \right) \wedge g(c)\right \} \right) \wedge 
\frac{1-k}{2} \\
&=&\left( \dbigvee \limits_{a=bc}\left \{ \left( \dbigvee
\limits_{a=pq}f(p)\right) \wedge g(c)\right \} \right) \wedge \frac{1-k}{2}
\\
&=&\left( \dbigvee \limits_{a=bc}\left \{ \left( \dbigvee
\limits_{a=pq}f(p)\right) \wedge g(c)\right \} \wedge \frac{1-k}{2}\right)
\wedge \frac{1-k}{2} \\
&=&\left( \dbigvee \limits_{a=bc}\left \{ \left( \dbigvee
\limits_{a=pq}\left \{ f(p)\wedge \frac{1-k}{2}\right \} \right) \wedge
g(c)\right \} \right) \wedge \frac{1-k}{2} \\
&\leq &\left( \dbigvee \limits_{a=bc}\left \{ \dbigvee
\limits_{a=pq}f(pq)\wedge g(c)\right \} \right) \wedge \frac{1-k}{2} \\
&=&\left( \dbigvee \limits_{a=bc}\{f(b)\wedge g(c)\} \right) \wedge \frac{1-k%
}{2} \\
&=&(f\circ _{k}g)(a)\text{.}
\end{eqnarray*}

Similarly we can prove that $(f\circ _{k}g)\leq (f\wedge _{k}g)$. Hence $%
(f\circ _{k}g)=(f\wedge _{k}g)$ for every $(\in ,\in \vee q_{k})$-fuzzy
right ideal $f$ and every $(\in ,\in \vee q_{k})$-fuzzy left ideal $g$ of $S$%
. Hence by theorem \ref{th 22}, we get that $S$ is regular.
\end{proof}

\begin{theorem}
\label{th 24}For a weakly regular AG-groupoid $S$ with left identity $e$,
the following conditions are equivalent:

$(i)$ $S$ is regular.

$(ii)$ $f_{k}=((f\circ _{k}1)\circ _{k}f)$ for every $(\in ,\in \vee q_{k})$%
-fuzzy generalized bi-ideal $f$ of $S$.

$(iii)$ $f_{k}=((f\circ _{k}1)\circ _{k}f)$ for every $(\in ,\in \vee q_{k})$%
-fuzzy bi-ideal $f$ of $S$.

$(iv)$ $f_{k}=((f\circ _{k}1)\circ _{k}f)$ for every $(\in ,\in \vee q_{k})$%
-fuzzy quasi-ideal $f$ of $S$.
\end{theorem}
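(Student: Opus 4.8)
The plan is to prove the cycle $(i)\Rightarrow(ii)\Rightarrow(iii)\Rightarrow(iv)\Rightarrow(i)$. The implications $(ii)\Rightarrow(iii)$ and $(iii)\Rightarrow(iv)$ are immediate from inclusions between the classes of fuzzy ideals involved: every $(\in ,\in \vee q_{k})$-fuzzy bi-ideal of $S$ satisfies, in particular, the condition of Definition \ref{def 4}, hence is an $(\in ,\in \vee q_{k})$-fuzzy generalized bi-ideal, so the identity in $(ii)$ specializes to $(iii)$; and by the lemma that every $(\in ,\in \vee q_{k})$-fuzzy quasi-ideal of a weakly regular AG-groupoid with left identity is an $(\in ,\in \vee q_{k})$-fuzzy bi-ideal, the identity in $(iii)$ specializes to $(iv)$.

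For $(i)\Rightarrow(ii)$ I would prove the two inclusions separately. The inclusion $(f\circ _{k}1)\circ _{k}f\subseteq f_{k}$ holds for \emph{every} $(\in ,\in \vee q_{k})$-fuzzy generalized bi-ideal $f$, by repeating the argument of Proposition \ref{P 4.5} with $\frac{1-k}{2}$ in place of $0.5$: if $a=pq$ and $p=uv$, then $(f\circ _{k}1)(p)\wedge f(q)=\bigvee_{p=uv}\{f(u)\wedge f(q)\wedge \frac{1-k}{2}\}$, each summand being at most $f((uv)q)=f(a)$ by the generalized bi-ideal inequality, while if $p$ is not a product the corresponding term is $0$; hence the whole expression is $\le f(a)\wedge \frac{1-k}{2}=f_{k}(a)$. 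For the reverse inclusion I would use regularity alone: given $a\in S$, choose $x$ with $a=(ax)a$; factoring $a=(ax)\cdot a$ and $ax=a\cdot x$ gives
\begin{equation*}
((f\circ _{k}1)\circ _{k}f)(a)\geq (f\circ _{k}1)(ax)\wedge f(a)\wedge \tfrac{1-k}{2}\geq f(a)\wedge 1(x)\wedge \tfrac{1-k}{2}\wedge f(a)\wedge \tfrac{1-k}{2}=f(a)\wedge \tfrac{1-k}{2}=f_{k}(a).
\end{equation*}
Thus neither weak regularity nor the left identity is needed for this direction; it is $(iv)\Rightarrow(i)$ that requires the full hypotheses of the theorem.

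For $(iv)\Rightarrow(i)$ I would argue through characteristic functions. Let $Q$ be any quasi-ideal of $S$; by the lemma relating characteristic functions to $(\in ,\in \vee q_{k})$-fuzzy quasi-ideals, $(C_{Q})_{k}$ is an $(\in ,\in \vee q_{k})$-fuzzy quasi-ideal of $S$, so applying $(iv)$ to $f=(C_{Q})_{k}$ and using $1=C_{S}$ together with the lemmas $C_{A}\circ _{k}C_{B}=C_{AB}$, $C_{A}\wedge _{k}C_{B}=C_{A\cap B}$ and the stability $(g_{k})_{k}=g_{k}$, the identity $f_{k}=(f\circ _{k}1)\circ _{k}f$ translates into the set-theoretic equality $(QS)Q=Q$, that is $QSQ=Q$, for every quasi-ideal $Q$ of $S$. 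Since $S$ is weakly regular with left identity, Theorem \ref{th 1} then yields that $S$ is regular.

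The main obstacle is precisely this last implication, and within it the bookkeeping: one must check that the $k$-decorated identities on characteristic functions compose to give \emph{exactly} $QSQ=Q$ after the operator $(\cdot)_{k}$ is applied, and that the hypotheses ``weakly regular'' and ``with left identity'' are invoked exactly where Theorem \ref{th 1} and the quasi-ideal/bi-ideal lemma require them. The first three implications are then routine once the correct inclusions among the fuzzy-ideal classes and the $\frac{1-k}{2}$-analogue of Proposition \ref{P 4.5} are in hand.
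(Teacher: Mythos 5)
Your proposal is correct and follows essentially the same route as the paper: the cycle $(i)\Rightarrow(ii)\Rightarrow(iii)\Rightarrow(iv)\Rightarrow(i)$, with $(i)\Rightarrow(ii)$ proved by the two inclusions (the upper bound from the generalized bi-ideal inequality, the lower bound from the factorization $a=(ax)a$ with $ax=a\cdot x$), and $(iv)\Rightarrow(i)$ by passing to characteristic functions of quasi-ideals to obtain $(QS)Q=Q$ and invoking Theorem \ref{th 1}. Your added observations — that the inclusion $(f\circ_k 1)\circ_k f\subseteq f_k$ needs no regularity and that the reverse inclusion needs only regularity — are accurate refinements of the same argument.
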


\begin{proof}
$(i)\Longrightarrow (ii)$

Let $f$ be an $(\in ,\in \vee q_{k})$-fuzzy generalized bi-ideal of $S$ and $%
a\in S$. Since $S$ is regular, so there exists $x\in S$ such that $a=(ax)a$.
Therefore we have%
\begin{eqnarray*}
((f\circ _{k}1)\circ _{k}f)(a) &=&((f\circ 1)\circ f)(a)\wedge \frac{1-k}{2}
\\
&=&\left( \dbigvee \limits_{a=yz}\{(f\circ 1)(y)\wedge f(z)\} \right) \wedge 
\frac{1-k}{2} \\
&\geq &(f\circ 1)(ax)\wedge f(a)\wedge \frac{1-k}{2} \\
&=&\left( \dbigvee \limits_{ax=pq}\{f(p)\wedge 1(q)\} \right) \wedge
f(a)\wedge \frac{1-k}{2} \\
&\geq &(f(a)\wedge 1)\wedge f(a)\wedge \frac{1-k}{2} \\
&=&f_{k}(a)\text{.}
\end{eqnarray*}

Thus $((f\circ _{k}1)\circ _{k}f)\geq f_{k}$.

Since $f$ is $(\in ,\in \vee q_{k})$-fuzzy generalized bi-ideal of $S$. So
we have 
\begin{eqnarray*}
((f\circ _{k}1)\circ _{k}f)(a) &=&((f\circ 1)\circ f)(a)\wedge \frac{1-k}{2}
\\
&=&\left( \dbigvee \limits_{a=yz}\{(f\circ 1)(y)\wedge f(z)\} \right) \wedge 
\frac{1-k}{2} \\
&=&\left( \dbigvee \limits_{a=yz}\left \{ \left( \dbigvee
\limits_{y=pq}\{f(p)\wedge 1(q)\} \right) \wedge f(z)\right \} \right)
\wedge \frac{1-k}{2} \\
&=&\left( \dbigvee \limits_{a=yz}\left \{ \left( \dbigvee
\limits_{y=pq}\{f(p)\wedge 1\} \right) \wedge f(z)\right \} \right) \wedge 
\frac{1-k}{2} \\
&=&\left( \dbigvee \limits_{a=yz}\left \{ \dbigvee
\limits_{y=pq}\{f(p)\wedge f(z)\} \right \} \right) \wedge \frac{1-k}{2} \\
&=&\dbigvee \limits_{a=yz}\left \{ \dbigvee \limits_{y=pq}\left \{
(f(p)\wedge f(z))\wedge \frac{1-k}{2}\right \} \wedge \frac{1-k}{2}\right \}
\\
&\leq &\dbigvee \limits_{a=(pq)z}\left \{ f((pq)z)\wedge \frac{1-k}{2}\right
\} \\
&=&f(a)\wedge \frac{1-k}{2} \\
&=&f_{k}(a)\text{.}
\end{eqnarray*}

Thus $((f\circ _{k}1)\circ _{k}f)\leq f_{k}$. Hence $((f\circ _{k}1)\circ
_{k}f)=f_{k}$.

$(ii)\Longrightarrow (iii)\Longrightarrow (iv)$ are obvious.

$(iv)\Longrightarrow (i)$

Let $A$ be any quasi-ideal of $S$. Then $C_{A}$ is an $(\in ,\in \vee q_{k})$%
-fuzzy quasi-ideal of $S$. Hence by hypothesis,%
\begin{equation*}
(C_{A})_{k}=((C_{A}\circ _{k}1)\circ _{k}C_{A})=((C_{A}\circ _{k}C_{S})\circ
_{k}C_{A})=(C_{(AS)A})_{k}\text{.}
\end{equation*}

This implies $A=(AS)A$. Hence it follows from theorem \ref{th 1}, that $S$
is regular.
\end{proof}

\begin{theorem}
\label{th 25}For a weakly regular AG-groupoid $S$ with left identity $e$,
the following conditions are equivalent:

$(i)$ $S$ is regular.

$(ii)$ $(f\wedge _{k}g)=((f\circ _{k}g)\circ _{k}f)$ for every $(\in ,\in
\vee q_{k})$-fuzzy quasi-ideal $f$ and every $(\in ,\in \vee q_{k})$-fuzzy
ideal $g$ of $S$.

$(iii)$ $(f\wedge _{k}g)=((f\circ _{k}g)\circ _{k}f)$ for every $(\in ,\in
\vee q_{k})$-fuzzy quasi-ideal $f$ and every $(\in ,\in \vee q_{k})$-fuzzy
interior ideal $g$ of $S$.

$(iv)$ $(f\wedge _{k}g)=((f\circ _{k}g)\circ _{k}f)$ for every $(\in ,\in
\vee q_{k})$-fuzzy bi-ideal $f$ and every $(\in ,\in \vee q_{k})$-fuzzy
ideal $g$ of $S$.

$(v)$ $(f\wedge _{k}g)=((f\circ _{k}g)\circ _{k}f)$ for every $(\in ,\in
\vee q_{k})$-fuzzy quasi-ideal $f$ and every $(\in ,\in \vee q_{k})$-fuzzy
interior ideal $g$ of $S$.

$(vi)$ $(f\wedge _{k}g)=((f\circ _{k}g)\circ _{k}f)$ for every $(\in ,\in
\vee q_{k})$-fuzzy generalized bi-ideal $f$ and every $(\in ,\in \vee q_{k})$%
-fuzzy ideal $g$ of $S$.

$(vii)$ $(f\wedge _{k}g)=((f\circ _{k}g)\circ _{k}f)$ for every $(\in ,\in
\vee q_{k})$-fuzzy generalized bi-ideal $f$ and every $(\in ,\in \vee q_{k})$%
-fuzzy interior ideal $g$ of $S$.
\end{theorem}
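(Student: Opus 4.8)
The plan is to prove Theorem~\ref{th 25} by the standard cycle of implications $(i)\Longrightarrow (vii)\Longrightarrow (vi)\Longrightarrow (iv)\Longrightarrow (ii)$, $(vii)\Longrightarrow (v)\Longrightarrow (iii)\Longrightarrow (ii)$, and $(ii)\Longrightarrow (i)$, exploiting that every fuzzy quasi-ideal is a fuzzy bi-ideal, every fuzzy bi-ideal is a fuzzy generalized bi-ideal, every fuzzy ideal is a fuzzy interior ideal, and (on a weakly regular AG-groupoid) every one-sided fuzzy ideal is a fuzzy bi-ideal; these inclusions collapse all the $(\in,\in\vee q_k)$ variants once the strongest version $(i)\Longrightarrow (vii)$ and the weakest converse $(ii)\Longrightarrow (i)$ are in hand. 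So the real content is two arguments.

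\textbf{Proof of $(i)\Longrightarrow (vii)$.} Let $f$ be an $(\in,\in\vee q_k)$-fuzzy generalized bi-ideal, $g$ an $(\in,\in\vee q_k)$-fuzzy interior ideal, and $a\in S$. Since $S$ is regular there is $x\in S$ with $a=(ax)a$, and since $S$ is weakly regular there are $y,z\in S$ with $a=(ay)(az)$. As in the proof of Theorem~\ref{th 23}, I would substitute the weakly-regular expression repeatedly inside the regular one and use the left invertive law $(1)$, the medial law $(2)$, the paramedial law $(3)$ and identity $(4)$ to rewrite $a$ in a factored form $a=(a\,u)\big((u'\,g\text{-word}\,u'')\,a\big)$ where the outer factors are $a$ and the inner middle factor has $g$-relevant letters of the shape $(pa)q$ with an inserted element. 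Then I evaluate
\begin{equation*}
((f\circ_k g)\circ_k f)(a)\geq (f\circ_k g)\big((a\,u)((\cdots)a)\big)\wedge f(a)\wedge\tfrac{1-k}{2}\geq f(a\,u)\wedge g((\cdots)a)\wedge f(a)\wedge\tfrac{1-k}{2},
\end{equation*}
and bound $f(a\,u)\geq f(a)\wedge\tfrac{1-k}{2}$ by the generalized bi-ideal property (with a suitable re-bracketing to fit the pattern $f((xy)z)$), and $g((\cdots)a)\geq g(a)\wedge\tfrac{1-k}{2}$ by the interior-ideal property $g((xw)z)\geq g(w)\wedge\tfrac{1-k}{2}$. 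This gives $((f\circ_k g)\circ_k f)(a)\geq f(a)\wedge g(a)\wedge\tfrac{1-k}{2}=(f\wedge_k g)(a)$. For the reverse inclusion I expand $((f\circ_k g)\circ_k f)(a)$ over all factorizations $a=(pq)z$, use $f((pq)z)\geq f(p)\wedge f(z)\wedge\tfrac{1-k}{2}$ and $g\subseteq 1$ together with $g$ being (in particular) a fuzzy subgroupoid, to obtain $((f\circ_k g)\circ_k f)(a)\leq f(a)\wedge\tfrac{1-k}{2}$; combining with $g$-data along the factorization and the absorption $\tfrac{1-k}{2}\wedge\tfrac{1-k}{2}=\tfrac{1-k}{2}$ yields $\leq (f\wedge_k g)(a)$. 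Hence equality.

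\textbf{Proof of $(ii)\Longrightarrow (i)$.} Here I would specialize to characteristic functions: given a quasi-ideal $Q$ and a two-sided ideal $I$ of $S$, Lemma~\ref{L 4.4}-type facts and the $C$-product lemma give that $(C_Q)_k$ is an $(\in,\in\vee q_k)$-fuzzy quasi-ideal and $(C_I)_k$ an $(\in,\in\vee q_k)$-fuzzy ideal, so $(ii)$ yields $(C_{Q\cap I})_k=(C_Q\wedge_k C_I)=((C_Q\circ_k C_I)\circ_k C_Q)=(C_{(QI)Q})_k$, whence $Q\cap I=(QI)Q$; taking $I=S$ gives $Q=(QS)Q$ for every quasi-ideal $Q$, and Theorem~\ref{th 1}$(iii)\Rightarrow(i)$ gives that $S$ is regular. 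The remaining implications $(vii)\Rightarrow(vi)\Rightarrow(iv)\Rightarrow(ii)$, $(vii)\Rightarrow(v)\Rightarrow(iii)\Rightarrow(ii)$ follow purely from the containments among the fuzzy-ideal classes noted above, since a statement quantified over a larger class of $f$ (or $g$) trivially implies the same statement over a smaller one, and an interior ideal hypothesis is weaker than an ideal hypothesis.

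\textbf{Main obstacle.} The genuinely hard step is the element-rewriting in $(i)\Longrightarrow (vii)$: producing, by a chain of applications of laws $(1)$--$(4)$, a factorization of $a$ whose \emph{first} coordinate is an $f$-absorbable word, whose \emph{outer-second} coordinate is $a$, and whose \emph{middle} coordinate has exactly the form $(\ast\, a\,\ast)$ needed to invoke the interior-ideal inequality for $g$ — all simultaneously. This is the same combinatorial juggling as in Theorem~\ref{th 23} and Lemma~\ref{L 4.4}, and getting every bracket in the right place is where essentially all the work lies; once the identity for $a$ is found, the fuzzy estimates are routine min-manipulations with $\tfrac{1-k}{2}$.
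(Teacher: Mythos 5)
Your overall architecture---prove the strongest implication $(i)\Longrightarrow (vii)$, descend to $(ii)$ through the trivial class-containment implications, and close the cycle with $(ii)\Longrightarrow (i)$ by specialization---is exactly the paper's. The gap is that the one step you yourself identify as carrying ``essentially all the work,'' namely the explicit factorization of $a$, is left as a promissory note, and the plan you sketch for producing it is off target: you propose to interleave the weak-regularity expression $a=(ay)(az)$ with the regularity expression, but no weak regularity is needed in this theorem. The paper uses only $a=(ax)a$ and three applications of the laws: $a=(ax)a=(((ax)a)x)a=((xa)(ax))a=(a((xa)x))a$, by $(1)$, $(1)$ again, and $(4)$. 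This already has the required shape: the outer factorization is $a=\bigl(a\cdot((xa)x)\bigr)\cdot a$, and the middle coordinate $(xa)x$ has the form $(\ast\,a)\ast$, so $((f\circ _{k}g)\circ _{k}f)(a)\geq f(a)\wedge g((xa)x)\wedge f(a)\wedge \tfrac{1-k}{2}\geq f(a)\wedge g(a)\wedge \tfrac{1-k}{2}=(f\wedge _{k}g)(a)$ using only the interior-ideal inequality for $g$. In particular, in this direction $f$ is evaluated only at $a$ itself, so no ``$f$-absorbable word'' in the first coordinate is needed; the combinatorial juggling you anticipate does not arise.

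There is also a genuine misstep in your upper bound. To get $((f\circ _{k}g)\circ _{k}f)\leq (f\wedge _{k}g)$ you appeal to ``$g$ being (in particular) a fuzzy subgroupoid,'' but an interior ideal is not assumed to be one via a property that helps here, and the subgroupoid inequality points the wrong way for bounding $g(q)$ by $g(a)$. What is needed (and what the paper does) is two separate majorizations over factorizations $a=(pq)z$: first replace $g$ by $1$ and use the generalized-bi-ideal inequality $f(p)\wedge f(z)\wedge \tfrac{1-k}{2}\leq f((pq)z)=f(a)$ to obtain $\leq f_{k}(a)$; then replace both copies of $f$ by $1$ and use the interior-ideal inequality $g(q)\wedge \tfrac{1-k}{2}\leq g((pq)z)=g(a)$ to obtain $\leq g_{k}(a)$. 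Your $(ii)\Longrightarrow (i)$ is correct and is essentially the paper's argument with one level of indirection unrolled: the paper takes $g=1$ in $(ii)$ to land in condition $(iv)$ of Theorem \ref{th 24}, whose own converse proof is precisely your characteristic-function computation $Q=(QS)Q$ followed by Theorem \ref{th 1}.
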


\begin{proof}
$(i)\Longrightarrow (vii)$

Let $f$ is an $(\in ,\in \vee q_{k})$-fuzzy generalized bi-ideal and $g$ is $%
(\in ,\in \vee q_{k})$-fuzzy interior ideal of $S$. Then 
\begin{eqnarray*}
((f\circ _{k}g)\circ _{k}f)(a) &=&((f\circ g)\circ f)(a)\wedge \frac{1-k}{2}
\\
&\leq &((f\circ 1)\circ f)(a)\wedge \frac{1-k}{2} \\
&=&\left( \dbigvee \limits_{a=yz}\{(f\circ 1)(y)\wedge f(z)\} \right) \wedge 
\frac{1-k}{2} \\
&=&\left( \dbigvee \limits_{a=yz}\left \{ \dbigvee
\limits_{y=pq}\{f(p)\wedge 1(q)\} \wedge f(z)\right \} \right) \wedge \frac{%
1-k}{2} \\
&=&\left( \dbigvee \limits_{a=yz}\left \{ \dbigvee
\limits_{y=pq}\{f(p)\wedge 1\} \wedge f(z)\right \} \right) \wedge \frac{1-k%
}{2} \\
&=&\left( \dbigvee \limits_{a=yz}\left \{ \dbigvee
\limits_{y=pq}\{f(p)\wedge f(z)\} \right \} \right) \wedge \frac{1-k}{2} \\
&=&\dbigvee \limits_{a=yz}\left \{ \dbigvee \limits_{y=pq}\{f(p)\wedge
f(z)\wedge \frac{1-k}{2}\} \right \} \\
&=&\dbigvee \limits_{a=(pq)z}\left \{ f(p)\wedge f(z)\wedge \frac{1-k}{2}%
\wedge \frac{1-k}{2}\right \} \\
&\leq &\dbigvee \limits_{a=(pq)z}f((pq)z))\wedge \frac{1-k}{2} \\
&=&f(a)\wedge \frac{1-k}{2} \\
&=&f_{k}(a)\text{.}
\end{eqnarray*}

Therefore $((f\circ _{k}g)\circ _{k}f)(a)\leq f_{k}(a)$. Also 
\begin{eqnarray*}
((f\circ _{k}g)\circ _{k}f)(a) &=&((f\circ g)\circ f)(a)\wedge \frac{1-k}{2}
\\
&\leq &((1\circ g)\circ 1)(a)\wedge \frac{1-k}{2} \\
&=&\left( \dbigvee \limits_{a=yz}\{(1\circ g)(y)\wedge 1(z)\} \right) \wedge 
\frac{1-k}{2} \\
&=&\left( \dbigvee \limits_{a=yz}\left \{ \dbigvee
\limits_{y=pq}\{1(p)\wedge g(q)\} \wedge 1\right \} \right) \wedge \frac{1-k%
}{2} \\
&=&\left( \dbigvee \limits_{a=yz}\left \{ \dbigvee \limits_{y=pq}g(q)\right
\} \right) \wedge \frac{1-k}{2} \\
&=&\dbigvee \limits_{a=yz}\left \{ \dbigvee \limits_{y=pq}\{g(q)\wedge \frac{%
1-k}{2}\} \right \} \\
&\leq &\dbigvee \limits_{a=(pq)z}g((pq)z))\wedge \frac{1-k}{2} \\
&=&g(a)\wedge \frac{1-k}{2}=g_{k}(a)\text{.}
\end{eqnarray*}

Thus $((f\circ _{k}g)\circ _{k}f)\leq (f_{k}\wedge g_{k})=(f\wedge _{k}g)$.
Now let $a\in S$. Since $S$\ is regular so there exists $x\in S$ such \
that, $a=(ax)a.$ Now by using $(1)$ and $(4)$, we have 
\begin{equation*}
a=(ax)a=(((ax)a)x)a=((xa)(ax))a=(a((xa)x))a\text{, }
\end{equation*}

so we have 
\begin{eqnarray*}
((f\circ _{k}g)\circ _{k}f)(a) &=&((f\circ g)\circ f)(a)\wedge \frac{1-k}{2}
\\
&=&\{(f\circ g)(y)\wedge f(z)\} \wedge \frac{1-k}{2} \\
&\geq &(f\circ g)\wedge f(a)\wedge \frac{1-k}{2} \\
&=&\left( \dbigvee \limits_{a((xa)x)=pq}\{f(p)\wedge g(q)\} \right) \wedge
f(a)\wedge \frac{1-k}{2} \\
&\geq &\{f(a)\wedge g(xa)x)\} \wedge f(a)\wedge \frac{1-k}{2} \\
&\geq &\left( f(a)\wedge g(a)\wedge \frac{1-k}{2}\right) \wedge f(a)\wedge 
\frac{1-k}{2} \\
&=&f(a)\wedge g(a)\wedge \frac{1-k}{2} \\
&=&(f\wedge kg)(a)\text{.}
\end{eqnarray*}%
Therefore $((f\circ _{k}g)\circ _{k}f)(a)\geq f_{k}(a)$. Hence $((f\circ
_{k}g)\circ _{k}f)(a)=f_{k}(a)$.

$(vii)\Longrightarrow (v)\Longrightarrow (iii)\Longrightarrow (ii)$ and $%
(vii)\Longrightarrow (vi)\Longrightarrow (iv)\Longrightarrow (ii)$ are clear.

$(ii)\Longrightarrow (i)$

Let $f~$be any $(\in ,\in \vee q_{k})$-fuzzy quasi-ideal of $S$. Then, since 
$S$ itself is an $(\in ,\in \vee q_{k})$-fuzzy two--sided ideal, we have 
\begin{equation*}
f_{k}(a)=f(a)\wedge \frac{1-k}{2}=(f\wedge 1)(a)\wedge \frac{1-k}{2}%
=(f\wedge _{k}1)(a)=(f\circ _{k}1\circ _{k}f)(a)\text{.}
\end{equation*}

Thus by theorem \ref{th 24}, $S$ is regular.
\end{proof}

\begin{theorem}
\label{th 26}For a weakly regular AG-groupoid $S$ with left identity $e,$
the following conditions are equivalent:

$(i)$ $S$ is regular.

$(ii)$ $(f\wedge _{k}g)\leq (f\circ _{k}g)$ for every $(\in ,\in \vee q_{k})$%
-fuzzy quasi-ideal $f$ and every $(\in ,\in \vee q_{k})$-fuzzy left ideal $g$
of $S$.

$(iii)$ $(f\wedge _{k}g)\leq (f\circ _{k}g)$ for every $(\in ,\in \vee
q_{k}) $-fuzzy bi-ideal $f$ and every $(\in ,\in \vee q_{k})$-fuzzy left
ideal $g$ of $S$.

$(iv)$ $(f\wedge _{k}g)\leq (f\circ _{k}g)$ for every $(\in ,\in \vee q_{k})$%
-fuzzy generalized bi-ideal $f$ and every $(\in ,\in \vee q_{k})$-fuzzy left
ideal $g$ of $S$.
\end{theorem}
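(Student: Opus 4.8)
The plan is to prove the cycle $(i)\Rightarrow(iv)\Rightarrow(iii)\Rightarrow(ii)\Rightarrow(i)$, so that the only substantive step, $(i)\Rightarrow(iv)$, handles the widest class (generalized bi-ideals), the two middle steps are immediate from class inclusions, and the closing step reduces to Theorem \ref{th 1}.

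For $(i)\Rightarrow(iv)$ I would fix $a\in S$ and an $(\in ,\in \vee q_{k})$-fuzzy left ideal $g$. Since $S$ is weakly regular, $a=(ap)(aq)$ for some $p,q\in S$, and the medial law $(2)$ turns this into $a=(aa)(pq)=a^{2}w$ with $w=pq$. Since $S$ is regular, $a=(ax)a$ for some $x\in S$. Substituting $a=(ax)a$ for the first factor of $a^{2}=a\cdot a$ and then repeatedly applying the left invertive law $(1)$ and the law $(4)$ (available because $S$ has a left identity), I would reduce the expression to
\[
a=a\cdot\bigl((au)a\bigr)\qquad\text{for some }u\in S .
\]
Explicitly, $a=\bigl((ax)a\cdot a\bigr)w=\bigl(a^{2}(ax)\bigr)w$ by $(1)$, then $\bigl(a^{2}(ax)\bigr)w=\bigl(w(ax)\bigr)a^{2}$ by $(1)$, then $\bigl(w(ax)\bigr)a^{2}=\bigl(a(wx)\bigr)a^{2}$ by $(4)$, and finally $\bigl(a(wx)\bigr)a^{2}=a\bigl((a(wx))a\bigr)$ by $(4)$, so $u=wx$ works. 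Writing $a=b\cdot c$ with $b=a$ and $c=(au)a$, the left ideal property of $g$ gives $g(c)=g\bigl((au)a\bigr)\geq\min\{g(a),\frac{1-k}{2}\}$, hence $(f\circ g)(a)\geq f(b)\wedge g(c)\geq f(a)\wedge g(a)\wedge\frac{1-k}{2}$ for any fuzzy subset $f$, and intersecting with $\frac{1-k}{2}$ gives $(f\circ _{k}g)(a)\geq(f\wedge _{k}g)(a)$. Thus the inequality in fact holds for every fuzzy subset $f$ and every $(\in ,\in \vee q_{k})$-fuzzy left ideal $g$, which already covers $(ii)$, $(iii)$ and $(iv)$ at once.

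Even without that observation, $(iv)\Rightarrow(iii)\Rightarrow(ii)$ is immediate: every $(\in ,\in \vee q_{k})$-fuzzy bi-ideal satisfies, a fortiori, the defining condition of an $(\in ,\in \vee q_{k})$-fuzzy generalized bi-ideal, and every $(\in ,\in \vee q_{k})$-fuzzy quasi-ideal of $S$ is an $(\in ,\in \vee q_{k})$-fuzzy bi-ideal by the lemma established above for weakly regular AG-groupoids with left identity; so an inequality valid for all generalized bi-ideals is valid for all bi-ideals, and one valid for all bi-ideals is valid for all quasi-ideals.

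For $(ii)\Rightarrow(i)$ I would take an arbitrary right ideal $R$ and left ideal $L$ of $S$. A right ideal is in particular a quasi-ideal, so $C_{R}$ is an $(\in ,\in \vee q_{k})$-fuzzy quasi-ideal and $C_{L}$ an $(\in ,\in \vee q_{k})$-fuzzy left ideal; applying $(ii)$ together with $C_{A}\wedge _{k}C_{B}=C_{A\cap B}$ and $C_{A}\circ _{k}C_{B}=C_{AB}$ gives $C_{R\cap L}\leq C_{RL}$, i.e. $R\cap L\subseteq RL$. Since $RL\subseteq RS\subseteq R$ and $RL\subseteq SL\subseteq L$ always hold, $R\cap L=RL$ for every right ideal $R$ and left ideal $L$, whence Theorem \ref{th 1} forces $S$ to be regular. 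The only genuine computation is the chain of identities in $(i)\Rightarrow(iv)$: each step is one application of $(1)$, $(2)$ or $(4)$, and the only delicate point is keeping track of which subterm to rewrite at each stage.
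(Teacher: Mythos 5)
Your proof is correct, and its skeleton---the cycle $(i)\Rightarrow(iv)\Rightarrow(iii)\Rightarrow(ii)\Rightarrow(i)$, with the middle implications read off from the facts that every $(\in,\in\vee q_{k})$-fuzzy bi-ideal is a generalized bi-ideal and every $(\in,\in\vee q_{k})$-fuzzy quasi-ideal of a weakly regular AG-groupoid with left identity is a bi-ideal---is exactly the paper's. You diverge in two places, both harmlessly. In $(i)\Rightarrow(iv)$ the paper starts from $a=(ay)(az)=a((ay)z)$, substitutes $a=(ax)a$ into the inner factor, and lands on $a=a\bigl((z((ya)x))a\bigr)$; you instead pass through the medial law to get $a=a^{2}(pq)$ and arrive at $a=a\bigl((a(wx))a\bigr)$. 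Both factorizations have the only shape that matters, $a=a\cdot(sa)$, so that the left-ideal property of $g$ applies to the second factor; I checked your four rewriting steps and each is a legitimate single application of $(1)$, $(2)$ or $(4)$. Your remark that no property of $f$ is ever used (only the value $f(a)$ appears) is accurate, and it is equally true of the paper's computation, so both arguments silently prove the inequality for an arbitrary fuzzy subset $f$. In $(ii)\Rightarrow(i)$ the paper stays at the fuzzy level: it notes that a fuzzy right ideal is a fuzzy quasi-ideal, derives $(f\wedge_{k}g)\leq(f\circ_{k}g)$ from the hypothesis, proves the reverse inequality unconditionally for fuzzy right and left ideals, and then invokes Theorem \ref{th 22}; you descend to crisp ideals via characteristic functions, obtain $R\cap L\subseteq RL$ from the hypothesis and $RL\subseteq R\cap L$ from the ideal conditions, and invoke Theorem \ref{th 1} directly. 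Your route is a little shorter and bypasses Theorem \ref{th 22}, at the cost of the (immediate) crisp observations that a right ideal is a quasi-ideal and that $RL\subseteq RS\subseteq R$ and $RL\subseteq SL\subseteq L$. Either way the equivalences are established.
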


\begin{proof}
$(i)\Longrightarrow (iv)$

Let $f$ and $g$ be any $(\in ,\in \vee q_{k})$-fuzzy generalized bi-ideal
and any $(\in ,\in \vee q_{k})$-fuzzy left ideal of $S$ respectively. Let $%
a\in S$. Since $S$ is regular, so there exists $x\in S$ such that $a=(ax)a$.
Also since $S$ is weakly regular, so there exists $y,z\in S$ such that $%
a=(ay)(az)$. Now by using $(4)$ and $(1)$ we have%
\begin{eqnarray*}
a &=&(ay)(az)=a((ay)z)=a((((ax)a)y)z) \\
&=&a(((ya)(ax))z)=a((a((ya)x))z)=a((z((ya)x))a)\text{.}
\end{eqnarray*}

So%
\begin{eqnarray*}
(f\circ _{k}g)(a) &=&(f\circ g)(a)\wedge \frac{1-k}{2} \\
&=&\left( \dbigvee \limits_{a=yz}\{f(y)\wedge g(z)\} \right) \wedge \frac{1-k%
}{2} \\
&\geq &f(a)\wedge g((z((ya)x))a)\wedge \frac{1-k}{2} \\
&\geq &f(a)\wedge g(a)\wedge \frac{1-k}{2}\wedge \frac{1-k}{2} \\
&=&f(a)\wedge g(a)\wedge \frac{1-k}{2} \\
&=&(f\wedge _{k}g)(a)\text{.}
\end{eqnarray*}

So $(f\wedge _{k}g)\leq (f\circ _{k}g)$.

$(iv)\Longrightarrow (iii)\Longrightarrow (ii)$ are obvious.

$(ii)\Longrightarrow (i)$

Let $f$ and $g$ be any $(\in ,\in \vee q_{k})$-fuzzy right ideal and any $%
(\in ,\in \vee q_{k})$-fuzzy left ideal of $S$ respectively. Since every $%
(\in ,\in \vee q_{k})$-fuzzy right ideal is an $(\in ,\in \vee q_{k})$-fuzzy
quasi-ideal of $S$. So $(f\wedge _{k}g)\leq (f\circ _{k}g)$. Now%
\begin{eqnarray*}
(f\circ _{k}g)(a) &=&(f\circ g)(a)\wedge \frac{1-k}{2} \\
&=&\left( \dbigvee \limits_{a=yz}\{f(y)\wedge g(z)\} \right) \wedge \frac{1-k%
}{2} \\
&=&\dbigvee \limits_{a=yz}\left( \{f(y)\wedge g(z)\} \wedge \frac{1-k}{2}%
\right) \\
&=&\dbigvee \limits_{a=yz}\left( \left \{ f(y)\wedge \frac{1-k}{2}\right \}
\wedge \left \{ g(z)\wedge \frac{1-k}{2}\right \} \wedge \frac{1-k}{2}\right)
\\
&\leq &\dbigvee \limits_{a=yz}\left( \{f(yz)\wedge g(yz)\} \wedge \frac{1-k}{%
2}\right) \\
&=&f(a)\wedge g(a)\wedge \frac{1-k}{2} \\
&=&(f\wedge _{k}g)(a)\text{.}
\end{eqnarray*}

So $(f\circ _{k}g)\leq (f\wedge _{k}g)$. Thus $(f\wedge _{k}g)=(f\circ
_{k}g) $ for every $(\in ,\in \vee q_{k})$-fuzzy right ideal $f$ and every$%
(\in ,\in \vee q_{k})$-fuzzy left ideal $g$ of $S$. So by theorem \ref{th 22}
we get that $S$ is regular.
\end{proof}

\begin{theorem}
\label{th 27}For a weakly regular AG-groupoid $S$ with left identity$,$ the
following conditions are equivalent:

$(i)$ $S$ is intra-regular.

$(ii)$ $(f\wedge _{k}g)\leq (f\circ _{k}g)$ for every $(\in ,\in \vee q_{k})$%
-fuzzy left ideal $f$ and every $(\in ,\in \vee q_{k})$-fuzzy right ideal $g$
of $S$.
\end{theorem}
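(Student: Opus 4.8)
The plan is to prove the two implications separately, along the lines of Theorem \ref{th 26}, but with one structural difference worth flagging at the outset: for an $(\in ,\in \vee q_{k})$-fuzzy left ideal $f$ and an $(\in ,\in \vee q_{k})$-fuzzy right ideal $g$ the naive reverse inclusion $f\circ _{k}g\leq f\wedge _{k}g$ is \emph{not} available (in a factorization $a=uv$ the left factor $u$ is the one seen by the right-ideal condition on $g$ and the right factor $v$ the one seen by the left-ideal condition on $f$, which is the wrong pairing), so $(ii)\Longrightarrow (i)$ cannot be closed purely on the fuzzy side. Instead, $(i)\Longrightarrow (ii)$ will be a direct computation from the intra-regular identity, and $(ii)\Longrightarrow (i)$ will be routed through characteristic functions and the classical Theorem \ref{th 2}.

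For $(i)\Longrightarrow (ii)$, let $f$ be an $(\in ,\in \vee q_{k})$-fuzzy left ideal and $g$ an $(\in ,\in \vee q_{k})$-fuzzy right ideal of $S$, and fix $a\in S$. Since $S$ is intra-regular there are $x,y\in S$ with $a=(xa^{2})y$. First use $(4)$ to write $xa^{2}=x(aa)=a(xa)$, so that $a=(a(xa))y$, and then use $(1)$ to write $(a(xa))y=(y(xa))a$; hence $a=(y(xa))a$. Applying the characterization of a fuzzy left ideal twice gives $f(y(xa))\geq \min \{f(xa),\frac{1-k}{2}\}\geq \min \{f(a),\frac{1-k}{2}\}$. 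Reading off the factorization $a=(y(xa))\cdot a$, we get $(f\circ _{k}g)(a)\geq f(y(xa))\wedge g(a)\wedge \frac{1-k}{2}\geq f(a)\wedge g(a)\wedge \frac{1-k}{2}=(f\wedge _{k}g)(a)$, so $f\wedge _{k}g\leq f\circ _{k}g$. Note that only intra-regularity and the left identity (through $(4)$) enter this implication, not weak regularity itself.

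For $(ii)\Longrightarrow (i)$, let $L$ be any left ideal and $R$ any right ideal of $S$. Then $(C_{L})_{k}$ and $(C_{R})_{k}$ are an $(\in ,\in \vee q_{k})$-fuzzy left ideal and right ideal respectively, so by $(ii)$ together with the lemmas on characteristic functions, $(C_{L\cap R})_{k}=((C_{L})_{k}\wedge _{k}(C_{R})_{k})\leq ((C_{L})_{k}\circ _{k}(C_{R})_{k})=(C_{LR})_{k}$, which forces $L\cap R\subseteq LR$. For the reverse inclusion I will show that in a weakly regular AG-groupoid with left identity every one-sided ideal is two-sided: if $l\in L$ and $s\in S$, weak regularity gives $l=(lx)(ly)$ for some $x,y\in S$, hence $l=l((lx)y)$ by $(4)$, and then $ls=(l((lx)y))s=(s((lx)y))l\in SL\subseteq L$ by $(1)$; thus $LS\subseteq L$, and dually $SR\subseteq R$ since $sr=(es)r=(rs)e\in RS\subseteq R$. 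Consequently $LR\subseteq LS\subseteq L$ and $LR\subseteq SR\subseteq R$, so $LR\subseteq L\cap R$. Combining the two inclusions, $R\cap L=LR$ for every right ideal $R$ and left ideal $L$ of $S$, and Theorem \ref{th 2} then gives that $S$ is intra-regular.

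The substantive steps are the short rewriting chain taking $(xa^{2})y$ to $(a(xa))y$ to $(y(xa))a$ in the forward direction, and the observation that one-sided ideals of $S$ are two-sided in the backward direction — this is precisely where weak regularity is exploited, since it plays no role in $(i)\Longrightarrow (ii)$. Everything else is routine bookkeeping with the characterizations of $(\in ,\in \vee q_{k})$-fuzzy one-sided ideals and the characteristic-function lemmas already recorded above.
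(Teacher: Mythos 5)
Your proof is correct, and in the forward direction it is genuinely leaner than the paper's. The paper also starts from $a=(xa^{2})y=(a(xa))y=(y(xa))a$, but then keeps rewriting — invoking weak regularity via $a=(ap)(aq)$ and the paramedial/medial laws — until it reaches $a=(xa)(a((y(ap))q))$, so that the left factor $xa$ feeds the left-ideal bound on $f$ and the right factor $a(\cdots)$ feeds the right-ideal bound on $g$. You stop at $a=(y(xa))a$ and observe that the right factor is already $a$ itself, so $g$ needs no bound at all and only the left-ideal property of $f$ (applied twice) is used; this shows, as you note, that weak regularity is not needed for $(i)\Longrightarrow(ii)$, which the paper's longer computation obscures. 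In the reverse direction your route through characteristic functions and Theorem \ref{th 2} coincides with the paper's, except that the paper stops at $R\cap L\subseteq LR$ and immediately cites Theorem \ref{th 2}, whose condition $(ii)$ is an equality; you supply the missing inclusion $LR\subseteq R\cap L$ by showing that in a weakly regular AG-groupoid with left identity every left ideal absorbs on the right (via $l=l((lx)y)$ and the left invertive law) and every right ideal absorbs on the left (via $sr=(rs)e$). That addition makes the appeal to Theorem \ref{th 2} airtight, and your opening observation that the naive inequality $f\circ_{k}g\leq f\wedge_{k}g$ fails for this (left, right) pairing correctly explains why the argument must pass through the crisp characterization rather than staying on the fuzzy side as in Theorem \ref{th 26}.
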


\begin{proof}
$(i)\Longrightarrow (ii)$

Let $f$ be an$(\in ,\in \vee q_{k})$-fuzzy left ideal and $g$ be an $(\in
,\in \vee q_{k})$-fuzzy right ideal of $S$. Let $a\in S$. Since $S$ is
intra-regular,

so there exists $x,y\in S$ such that $a=(xa^{2})y$. Also since $S$ is weakly
regular, so there exists $p,q\in S$ such that $a=(ap)(aq)$. Therefore by
using $(4),(1),(2)$ and $(3)$ we have%
\begin{eqnarray*}
a &=&(xa^{2})y=(x(aa))y=(a(xa))y=(y(xa))a \\
&=&(y(xa))(ea)=(ae)((xa)y)=(xa)((ae)y) \\
&=&(xa)((((ap)(aq))e)y)=(xa)(((aq)(ap))y) \\
&=&(xa)((y(ap))(aq))=(xa)(a((y(ap))q))\text{.}
\end{eqnarray*}

So%
\begin{eqnarray*}
(f\circ _{k}g)(a) &=&(f\circ g)(a)\wedge \frac{1-k}{2} \\
&=&\left( \dbigvee \limits_{a=yz}\{f(y)\wedge g(z)\} \right) \wedge \frac{1-k%
}{2} \\
&\geq &\{f(xa)\wedge g(a((y(ap))q))\} \wedge \frac{1-k}{2} \\
&\geq &\left \{ \left( f(a)\wedge \frac{1-k}{2}\right) \wedge \left(
g(a)\wedge \frac{1-k}{2}\right) \right \} \wedge \frac{1-k}{2} \\
&=&f(a)\wedge g(a)\wedge \frac{1-k}{2} \\
&=&(f\wedge _{k}g)(a)\text{.}
\end{eqnarray*}%
Therefore $(f\wedge _{k}g)\leq (f\circ _{k}g)$ for every $(\in ,\in \vee
q_{k})$-fuzzy left ideal $f$ and every $(\in ,\in \vee q_{k})$-fuzzy right
ideal $g$ of $S$.

$(ii)\Longrightarrow (i)$

Let $R$ and $L$ be right and left ideals of $S$, then $(C_{R})_{k}$ and $%
(C_{L})_{k}$ are $(\in ,\in \vee q_{k})$-fuzzy right and $(\in ,\in \vee
q_{k})$-fuzzy left ideals of $S$, respectively. Then by hypothesis, we have, 
$(C_{LR})_{k}=(C_{L}\circ _{k}C_{R})\geq (C_{L}\wedge _{k}C_{R})=(C_{R\cap
L})_{k}$. Thus $R\cap L\subseteq LR.$ Hence it follows from theorem \ref{th
2}, that $S$ is intra-regular$.$
\end{proof}

\begin{theorem}
\label{th 28}For a weakly regular AG-groupoid $S$ with left identity $e,$
the following conditions are equivalent:

$(i)$ $S$ is regular and intra-regular.

$(ii)$ $f\circ _{k}f=f_{k}$ for every $(\in ,\in \vee q_{k})$-fuzzy
quasi-ideal $f$ of $S$.

$(iii)$ $f\circ _{k}f=f_{k}$ for every $(\in ,\in \vee q_{k})$-fuzzy
bi-ideal $f$ of $S$.

$(iv)$ $f\circ _{k}g\geq f\wedge _{k}g$ for every $(\in ,\in \vee q_{k})$%
-fuzzy quasi-ideals $f$ and $g$ of $S$.

$(v)$ $f\circ _{k}g\geq f\wedge _{k}g$ for every $(\in ,\in \vee q_{k})$%
-fuzzy quasi-ideal $f$ and for every $(\in ,\in \vee q_{k})$-fuzzy bi-ideal $%
g$ of $S$.

$(vi)$ $f\circ _{k}g\geq f\wedge _{k}g$ for every $(\in ,\in \vee q_{k})$%
-fuzzy bi-ideals $f$ and $g$ of $S$.
\end{theorem}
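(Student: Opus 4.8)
The plan is to prove all six conditions equivalent by establishing the cycle $(i)\Rightarrow(vi)\Rightarrow(iii)\Rightarrow(ii)\Rightarrow(i)$ together with the auxiliary implications $(vi)\Rightarrow(v)\Rightarrow(iv)\Rightarrow(ii)$, which fold $(iv)$ and $(v)$ into the cycle. I expect every implication except $(i)\Rightarrow(vi)$ and $(ii)\Rightarrow(i)$ to reduce to routine observations.

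For the core step $(i)\Rightarrow(vi)$ I would take $(\in,\in\vee q_k)$-fuzzy bi-ideals $f,g$ and a point $a\in S$, invoke regularity to write $a=(ax)a$ and intra-regularity to write $a=(ya^2)z$, and then rearrange $a$ — by the same chain of applications of the left invertive law $(1)$, the medial law $(2)$, the paramedial law $(3)$ and identity $(4)$ that is carried out in the proof of Theorem \ref{T 4.4}$(i)$ — into the factored form
\[
a=\big((a^2W)a^2\big)\big((ax)a\big),\qquad W:=zx\big((y(zx))y\big).
\]
Setting $P=(a^2W)a^2$ and $Q=(ax)a$, the bi-ideal inequality for $g$ gives $g(Q)\geq g(a)\wedge g(a)\wedge\frac{1-k}{2}=g(a)\wedge\frac{1-k}{2}$, while the bi-ideal inequality for $f$ followed by the fuzzy subgroupoid inequality gives $f(P)\geq f(a^2)\wedge\frac{1-k}{2}\geq f(a)\wedge\frac{1-k}{2}$. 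Since $a=PQ$, the definition of $\circ_k$ (taking $b=P$, $c=Q$) then yields
\[
(f\circ_k g)(a)\geq f(P)\wedge g(Q)\wedge\tfrac{1-k}{2}\geq f(a)\wedge g(a)\wedge\tfrac{1-k}{2}=(f\wedge_k g)(a),
\]
which is precisely $(vi)$.

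For the auxiliary implications I would argue as follows. The steps $(vi)\Rightarrow(v)$ and $(v)\Rightarrow(iv)$ amount to weakening a quasi-ideal hypothesis to a bi-ideal hypothesis, which is legitimate by the earlier lemma that every $(\in,\in\vee q_k)$-fuzzy quasi-ideal of a weakly regular AG-groupoid with left identity is an $(\in,\in\vee q_k)$-fuzzy bi-ideal. For $(vi)\Rightarrow(iii)$ and $(iv)\Rightarrow(ii)$ I would set $g=f$, which gives $f\circ_k f\geq f\wedge_k f=f_k$, and then combine this with the opposite inequality $f\circ_k f\leq f_k$, valid for every $(\in,\in\vee q_k)$-fuzzy subgroupoid $f$ because $(f\circ_k f)(a)=\bigvee_{a=bc}\big(f(b)\wedge f(c)\wedge\frac{1-k}{2}\big)\leq\bigvee_{a=bc}\big(f(bc)\wedge\frac{1-k}{2}\big)=f_k(a)$ (weak regularity ensures every element of $S$ is a product, so these suprema range over non-empty sets). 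Finally $(iii)\Rightarrow(ii)$ is again the quasi-ideal$\,\subseteq\,$bi-ideal lemma.

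To close the cycle via $(ii)\Rightarrow(i)$ I would pass to characteristic functions: for an arbitrary quasi-ideal $Q$ of $S$ the fuzzy set $(C_Q)_k$ is an $(\in,\in\vee q_k)$-fuzzy quasi-ideal, and feeding it into $(ii)$ and using the lemmas that convert $\circ_k$ and $\wedge_k$ into the corresponding subset operations would give $(C_{Q^2})_k=(C_Q)_k$, hence $Q^2=Q$; so every quasi-ideal of $S$ is idempotent and Theorem \ref{th 3} forces $S$ to be regular and intra-regular. (Closing instead from $(iv)$ with $f=g=(C_Q)_k$ yields only $Q\subseteq Q^2$, but $QQ\subseteq SQ\cap QS\subseteq Q$ holds for any quasi-ideal, so $Q^2=Q$ again.) I expect the only real obstacle to be the multiplicative identity $a=((a^2W)a^2)((ax)a)$ used in $(i)\Rightarrow(vi)$: it is the single place where both regularity and intra-regularity are genuinely needed, and threading the bracketings correctly through the successive uses of $(1)$--$(4)$ is delicate — though, as noted, an essentially identical rearrangement already appears in the proof of Theorem \ref{T 4.4}.
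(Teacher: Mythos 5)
Your proposal is correct, and its logical architecture ($(i)\Rightarrow(vi)$, then $(vi)\Rightarrow(v)\Rightarrow(iv)\Rightarrow(ii)$ and $(vi)\Rightarrow(iii)\Rightarrow(ii)$, closed by $(ii)\Rightarrow(i)$ through characteristic functions of quasi-ideals and Theorem \ref{th 3}) is exactly the paper's. The one genuine difference is the factorization used in $(i)\Rightarrow(vi)$: you recycle the identity $a=\bigl((a^{2}(zx((y(zx))y)))a^{2}\bigr)\bigl((ax)a\bigr)$ from Theorem \ref{T 4.4}, which draws on regularity and intra-regularity and then needs the extra subgroupoid step $f(a^{2})\geq f(a)\wedge\frac{1-k}{2}$ before the bi-ideal inequality can be applied to the left factor. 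The paper instead derives a fresh decomposition $a=\bigl((a(x((qp)((ap)q))))a\bigr)\bigl((ax)a\bigr)$ from $a=(ax)a$ and the weak regularity expression $a=(ap)(aq)$; its left factor has the shape $(aW)a$, so the generalized bi-ideal inequality applies to it directly with no detour through $f(a^2)$, and (curiously) intra-regularity is never actually invoked in that direction. Both factorizations are legitimate under hypothesis $(i)$ and both yield $(f\circ_{k}g)(a)\geq f(a)\wedge g(a)\wedge\frac{1-k}{2}$; yours has the advantage of reusing an identity already verified in the paper, the paper's has the advantage of working verbatim for generalized bi-ideals (not needed here, since all classes in the statement are bi-ideals). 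Two small remarks: your parenthetical about weak regularity guaranteeing non-empty suprema is unnecessary for the inequality $f\circ_{k}f\leq f_{k}$ (if $a$ admits no factorization the left side is $0$), and your aside about closing the cycle from $(iv)$ via $QQ\subseteq SQ\cap QS\subseteq Q$ is correct but redundant given that you already route $(iv)\Rightarrow(ii)\Rightarrow(i)$.
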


\begin{proof}
$(i)\Longrightarrow (vi)$

Let \ $f,g$ be $(\in ,\in \vee q_{k})$-fuzzy bi-ideals of $S$ and $a\in S$.
Since $S$ is regular and intra-regular, so there exists $x,~y,~z\in S,$ such
that $a=(ax)a$ and $a=(ya^{2})z$. Also $S$ is weakly regular so there exists 
$p,q\in S$ such that $a=(ap)(aq)$. Now by using $(4)$ and $(1)$ we have$,$%
\begin{eqnarray*}
a &=&(ax)a=(ax)((ax)a)=(((ap)(aq))x)((ax)a) \\
&=&((a((ap)q))x)((ax)a)=((x((ap)q))a)((ax)a) \\
&=&((x(((ap)(aq))p)q))a)((ax)a) \\
&=&((x((qp)((ap)(aq))))a)((ax)a) \\
&=&((x((qp)(a((ap)q))))a)((ax)a) \\
&=&((x(a((qp)((ap)q))))a)((ax)a) \\
&=&((a(x((qp)((ap)q))))a)((ax)a)\text{.}
\end{eqnarray*}

Then,%
\begin{eqnarray*}
(f\circ _{k}g)(a) &=&(f\circ g)(a)\wedge \frac{1-k}{2} \\
&=&\left( \dbigvee \limits_{a=bc}\{f(b)\wedge g(c)\} \right) \wedge \frac{1-k%
}{2} \\
&\geq &f((a(x((qp)((ap)q))))a)\wedge g((ax)a)\wedge \frac{1-k}{2} \\
&\geq &\left( f(a)\wedge \frac{1-k}{2}\right) \wedge \left( g(a)\wedge \frac{%
1-k}{2}\right) \wedge \frac{1-k}{2} \\
&\geq &\{f(a)\wedge g(a)\} \wedge \frac{1-k}{2} \\
&=&(f\wedge _{k}g)(a)\text{.}
\end{eqnarray*}

Thus $f\circ _{k}g\geq f\wedge _{k}g$ for every $(\in ,\in \vee q_{k})$%
-fuzzy bi-ideals $f$ and $g$ of $S$.

$(vi)\Longrightarrow (v)\Longrightarrow (iv)$ are obvious.

$(iv)\Longrightarrow (ii)$ Put $f=g$ in $(iv)$, we get $f\circ _{k}f\geq
f_{k}$. Since every $(\in ,\in \vee q_{k})$-fuzzy quasi-ideal is an $(\in
,\in \vee q_{k})$-fuzzy subgroupoid, so $f\circ _{k}f\leq f_{k}$. Thus $%
f\circ _{k}f=f_{k}$.

$(vi)\Longrightarrow (iii)$Put $f=g$ in $(iv)$, we get $f\circ _{k}f\geq
f_{k}$. Since every $(\in ,\in \vee q_{k})$-fuzzy bi-ideal is an $(\in ,\in
\vee q_{k})$-fuzzy subgroupoid, so $f\circ _{k}f\leq f_{k}$. Thus $f\circ
_{k}f=f_{k}$.

$(iii)\Longrightarrow (ii)$ is obvious.

$(ii)\Longrightarrow (i)$ Let $Q$ be a quasi ideal of $S$. Then by lemma $4$
in[\cite{junn2}]$,C_{Q}$ is an $(\in ,\in \vee q_{k})$-fuzzy quasi-ideal of $%
S$. Hence by hypothesis, $C_{Q}\circ _{k}C_{Q}=(C_{Q})_{k}$. Thus $%
(C_{QQ})_{k}=C_{Q}\circ _{k}C_{Q}=(C_{Q})_{k}$, implies that $QQ=\dot{Q}$.
So by theorem $\ref{th 3}$, $S$ is both regular and intra-regular.
\end{proof}

\begin{theorem}
For a weakly regular AG-groupoid $S$ with left identity $e,$ the following
conditions are equivalent:

$(i)$ $S$ is regular and intra-regular.

$(ii)$ $(f\circ _{k}g)\wedge (g\circ _{k}f)\geq f\wedge _{k}g$ for every $%
(\in ,\in \vee q_{k})$-fuzzy right ideal $f$ and every $(\in ,\in \vee
q_{k}) $-fuzzy left ideal $g$ of $S$.

$(iii)$ $(f\circ _{k}g)\wedge (g\circ _{k}f)\geq f\wedge _{k}g$ for every $%
(\in ,\in \vee q_{k})$-fuzzy right ideal $f$ and every $(\in ,\in \vee
q_{k}) $-fuzzy quasi-ideal $g$ of $S$.

$(iv)$ $(f\circ _{k}g)\wedge (g\circ _{k}f)\geq f\wedge _{k}g$ for every $%
(\in ,\in \vee q_{k})$-fuzzy right ideal $f$ and every $(\in ,\in \vee
q_{k}) $-fuzzy bi-ideal $g$ of $S$.

$(v)$ $(f\circ _{k}g)\wedge (g\circ _{k}f)\geq f\wedge _{k}g$ for every $%
(\in ,\in \vee q_{k})$-fuzzy right ideal $f$ and every $(\in ,\in \vee
q_{k}) $-fuzzy generalized bi-ideal $g$ of $S$.

$(vi)$ $(f\circ _{k}g)\wedge (g\circ _{k}f)\geq f\wedge _{k}g$ for every $%
(\in ,\in \vee q_{k})$-fuzzy left ideal $f$ and every $(\in ,\in \vee q_{k})$%
-fuzzy quasi-ideal $g$ of $S$.

$(vii)$ $(f\circ _{k}g)\wedge (g\circ _{k}f)\geq f\wedge _{k}g$ for every $%
(\in ,\in \vee q_{k})$-fuzzy left ideal $f$ and every $(\in ,\in \vee q_{k})$%
-fuzzy bi-ideal $g$ of $S$.

$(viii)$ $(f\circ _{k}g)\wedge (g\circ _{k}f)\geq f\wedge _{k}g$ for every $%
(\in ,\in \vee q_{k})$-fuzzy left ideal $f$ and every $(\in ,\in \vee q_{k})$%
-fuzzy generalized bi-ideal $g$ of $S$.

$(ix)$ $(f\circ _{k}g)\wedge (g\circ _{k}f)\geq f\wedge _{k}g$ for every $%
(\in ,\in \vee q_{k})$-fuzzy quasi-ideals $f$ and $g$ of $S$.

$(x)$ $(f\circ _{k}g)\wedge (g\circ _{k}f)\geq f\wedge _{k}g$ for every $%
(\in ,\in \vee q_{k})$-fuzzy quasi-ideal $f$ and every $(\in ,\in \vee
q_{k}) $-fuzzy bi-ideal $g$ of $S$.

$(xi)$ $(f\circ _{k}g)\wedge (g\circ _{k}f)\geq f\wedge _{k}g$ for every $%
(\in ,\in \vee q_{k})$-fuzzy quasi-ideal $f$ and every $(\in ,\in \vee
q_{k}) $-fuzzy generalized bi-ideal $g$ of $S$.

$(xii)$ $(f\circ _{k}g)\wedge (g\circ _{k}f)\geq f\wedge _{k}g$ for every $%
(\in ,\in \vee q_{k})$-fuzzy bi-ideals $f$ and $g$ of $S$.

$(xiii)$ $(f\circ _{k}g)\wedge (g\circ _{k}f)\geq f\wedge _{k}g$ for every $%
(\in ,\in \vee q_{k})$-fuzzy bi-ideal $f$ and every $(\in ,\in \vee q_{k})$%
-fuzzy generalized bi-ideal $g$ of $S$.

$(ixv)$ $(f\circ _{k}g)\wedge (g\circ _{k}f)\geq f\wedge _{k}g$ for every $%
(\in ,\in \vee q_{k})$-fuzzy generalized bi-ideals $f$ and $g$ of $S$.
\end{theorem}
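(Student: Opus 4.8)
The plan is to run the cycle $(i)\Longrightarrow(ixv)\Longrightarrow(\text{all the remaining conditions})\Longrightarrow(ii)\Longrightarrow(i)$, so that the genuine content is concentrated in the single implication $(i)\Longrightarrow(ixv)$ — this is the strongest statement, since under the standing hypotheses every $(\in,\in\vee q_{k})$-fuzzy right, left, quasi-, or bi-ideal of $S$ is an $(\in,\in\vee q_{k})$-fuzzy generalized bi-ideal — together with the closing implication $(ii)\Longrightarrow(i)$.

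For $(i)\Longrightarrow(ixv)$, let $f,g$ be $(\in,\in\vee q_{k})$-fuzzy generalized bi-ideals of $S$ and let $a\in S$. Since $S$ is regular there is $x\in S$ with $a=(ax)a$; since $S$ is intra-regular there are $y,z\in S$ with $a=(ya^{2})z$; and since $S$ is weakly regular there are $p,q\in S$ with $a=(ap)(aq)$. Exactly as in the computation preceding Theorem \ref{th 28}, I would substitute these expressions into one another repeatedly, using the left invertive law $(1)$, the medial law $(2)$, the paramedial law $(3)$ and identity $(4)$, so as to display $a$ as a product $a=P\,Q$ in which $P$ has the form $(a\,u)\,a$ and $Q$ has the form $(a\,v)\,a$ for suitable $u,v\in S$; this is precisely the shape that makes the generalized bi-ideal inequality $f((st)r)\geq f(s)\wedge f(r)\wedge\frac{1-k}{2}$ usable. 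Then $(f\circ_{k}g)(a)=\left(\bigvee_{a=bc}\{f(b)\wedge g(c)\}\right)\wedge\frac{1-k}{2}\geq f((a\,u)\,a)\wedge g((a\,v)\,a)\wedge\frac{1-k}{2}\geq f(a)\wedge g(a)\wedge\frac{1-k}{2}=(f\wedge_{k}g)(a)$. A parallel rewriting — legitimate because the identities are symmetric in the available data — produces a factorization $a=Q'\,P'$ with $P'$ of the form $(a\,u')\,a$ and $Q'$ of the form $(a\,v')\,a$, whence likewise $(g\circ_{k}f)(a)\geq(f\wedge_{k}g)(a)$. Taking the meet gives $(f\circ_{k}g)\wedge(g\circ_{k}f)\geq f\wedge_{k}g$, which is $(ixv)$.

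The implications $(ixv)\Longrightarrow(xiii)\Longrightarrow(xii)\Longrightarrow\cdots\Longrightarrow(ii)$ are then immediate: in each step one merely replaces a class of fuzzy ideals by a smaller one (generalized bi-ideal $\supseteq$ bi-ideal $\supseteq$ quasi-ideal $\supseteq$ one-sided ideal, with the corresponding fuzzy versions established in the lemmas of this section), so a statement quantified over the larger class specializes to the same statement over the smaller class. It remains to close the cycle with $(ii)\Longrightarrow(i)$. Let $R$ be any right ideal and $L$ any left ideal of $S$. Then $(C_{R})_{k}$ is an $(\in,\in\vee q_{k})$-fuzzy right ideal and $(C_{L})_{k}$ an $(\in,\in\vee q_{k})$-fuzzy left ideal, so $(ii)$ gives $((C_{R})_{k}\circ_{k}(C_{L})_{k})\wedge((C_{L})_{k}\circ_{k}(C_{R})_{k})\geq(C_{R})_{k}\wedge_{k}(C_{L})_{k}$. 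By the characteristic-function lemmas this reads $(C_{RL})_{k}\wedge(C_{LR})_{k}\geq(C_{R\cap L})_{k}$, which forces $R\cap L\subseteq RL$ and $R\cap L\subseteq LR$; since $RL\subseteq R\cap L$ and $LR\subseteq R\cap L$ always hold, we get $R\cap L=RL=LR$. By Theorem \ref{th 1} the equality $R\cap L=RL$ for all such $R,L$ makes $S$ regular, and by Theorem \ref{th 2} the equality $R\cap L=LR$ makes $S$ intra-regular, so $(i)$ holds.

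The hard part will be the first implication: organizing the chain of applications of $(1)$--$(4)$ so that $a$ is displayed simultaneously in a form suitable for computing both $f\circ_{k}g$ and $g\circ_{k}f$ — that is, so that $a$ appears as an outer left factor of one bracketed triple and, at the same time, the complementary factor is itself a bracketed triple with $a$ in an outer position — since without this the generalized bi-ideal hypotheses on $f$ and $g$ cannot be brought to bear. Everything after that reduces to routine bookkeeping with $\wedge$ and $\frac{1-k}{2}$.
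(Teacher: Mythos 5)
Your architecture coincides with the paper's: prove $(i)\Longrightarrow (ixv)$, cascade down by specializing the quantified class of fuzzy ideals, and close the cycle with $(ii)\Longrightarrow (i)$. But the one step that carries all of the mathematical content of the hard direction — the explicit factorization of $a$ — is only announced, not carried out, and you yourself label it ``the hard part.'' That is a genuine gap. The whole point of $(i)\Longrightarrow (ixv)$ is to exhibit a concrete decomposition $a=PQ$ to which the generalized bi-ideal inequality can be applied to both factors; asserting that laws $(1)$--$(4)$ can be ``organized'' to yield factors of the shape $(au)a$ is not a proof, and it is not clear a priori that your proposed target shape is attainable. Indeed, the paper does \emph{not} achieve it: it imports the identity $a=((a^{2}(zx((y(zx))y)))a^{2})((ax)a)$ from the computation in Theorem \ref{T 4.4}, whose first factor has outer entries $a^{2}$ rather than $a$, obtains $(f\circ _{k}g)(a)\geq f(a^{2})\wedge g(a)\wedge \frac{1-k}{2}$, and then needs the separate auxiliary rewriting $a^{2}=(a((ax)((xe)a)))a$ to conclude $f(a^{2})\geq f(a)\wedge \frac{1-k}{2}$. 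Your cleaner symmetric shape would eliminate that auxiliary step, but until you actually produce such an identity (and a second one for $g\circ _{k}f$, which the paper also only waves at with ``similarly''), the implication is unproved.

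Two smaller points. Your $(ii)\Longrightarrow (i)$ runs through characteristic functions and the crisp Theorems \ref{th 1} and \ref{th 2}, whereas the paper shows directly that $(f\circ _{k}g)\leq (f\wedge _{k}g)$ always holds for a right ideal $f$ and a left ideal $g$ and then invokes Theorems \ref{th 22} and \ref{th 27}; both routes are acceptable in principle, but your claim that $LR\subseteq R\cap L$ ``always holds'' is not justified in an AG-groupoid (a left ideal $L$ need not satisfy $LS\subseteq L$), and you do not actually need it, since Theorem \ref{th 2} is applied in this paper on the strength of the single inclusion $R\cap L\subseteq LR$. The cascade of specializations is fine and is treated the same way in the paper.
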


\begin{proof}
$(i)\Longrightarrow (ixv)$

Let $f$ and $g$ be $(\in ,\in \vee q_{k})$-fuzzy generalized bi-ideals of $S$
and $a\in S.$ Since $S$ is regular and intra-regular,

so there exists $x,~y,~z\in S,$ such that $a=(ax)a$ and $a=(ya^{2})z$. In
theorem \ref{T 4.4} it have been shown that $%
a=((a^{2}(zx((y(zx))y)))a^{2})((ax)a)$. then%
\begin{eqnarray*}
(f\circ _{k}g)(a) &=&\left( \dbigvee \limits_{a=bc}\{f(b)\wedge g(c)\} \right)
\wedge \frac{1-k}{2} \\
&\geq &\left \{ f((a^{2}(zx((y(zx))y)))a^{2})\wedge g\right \} \wedge \frac{1-k%
}{2} \\
&\geq &\left \{ f(a^{2})\wedge \frac{1-k}{2}\right \} \wedge \left \{
g(a)\wedge \frac{1-k}{2}\right \} \wedge \frac{1-k}{2} \\
&=&\{f(a^{2})\wedge g(a)\} \wedge \frac{1-k}{2}\text{,}
\end{eqnarray*}

now by using $(1),(4),(2)$ and $(3)$ we have%
\begin{eqnarray*}
f(a^{2}) &=&f(aa)=f(((ax)a)((ax)a)) \\
&=&f((((ax)a)a)(ax))=f(((aa)(ax))(ax)) \\
&=&f((a((aa)x))(ax))=f(((ax)((aa)x))a) \\
&=&f(((ax)((aa)(ex)))a)=f(((ax)((xe)(aa)))a) \\
&=&f(((ax)(a((xe)a)))a)=f((a((ax)((xe)a)))a) \\
&\geq &f(a\dot{)}\text{.}
\end{eqnarray*}

Therefore 
\begin{eqnarray*}
(f\circ _{k}g)(a) &\geq &\{f(a^{2})\wedge g(a)\} \wedge \frac{1-k}{2} \\
&\geq &\{f(a)\wedge g(a)\} \wedge \frac{1-k}{2} \\
&=&(f\wedge _{k}g)(a)\text{.}
\end{eqnarray*}

Similarly we can prove that $(g\circ _{k}f)(a)\geq (f\wedge _{k}g)(a)$.
Hence $(f\circ _{k}g)\wedge (g\circ _{k}f)\geq f\wedge _{k}g$.

$(ixv)\Longrightarrow (xiii)\Longrightarrow (xii)\Longrightarrow
(x)\Longrightarrow (ix)\Longrightarrow (iii)\Longrightarrow (ii)$,

$(ixv)\Longrightarrow (xi)\Longrightarrow (x)$,

$(ixv)\Longrightarrow (viii)\Longrightarrow (iiv)\Longrightarrow
(xi)\Longrightarrow (ii)$ and

$(ixv)\Longrightarrow (v)\Longrightarrow (iv)\Longrightarrow
(iii)\Longrightarrow (ii)$ are obvious.

$(ii)\Longrightarrow (i)$

Let $f$ be an $(\in ,\in \vee q_{k})$-fuzzy right ideal and $g$ be an $(\in
,\in \vee q_{k})$-fuzzy left ideal of $S$. For $a\in S$, we have 
\begin{eqnarray*}
(f\circ _{k}g)(a) &=&(f\circ g)(a)\wedge \\
&=&\left( \dbigvee \limits_{a=yz}\{f(y)\wedge g(z)\} \right) \wedge \frac{1-k%
}{2} \\
&=&\dbigvee \limits_{a=yz}\left \{ f(y)\wedge g(z)\wedge \frac{1-k}{2}\right
\} \\
&=&\dbigvee \limits_{a=yz}\left \{ \left( f(y)\wedge \frac{1-k}{2}\right)
\wedge \left( g(z)\wedge \frac{1-k}{2}\right) \wedge \frac{1-k}{2}\right \}
\\
&\leq &\dbigvee \limits_{a=yz}\left \{ (f(yz)\wedge g(yz))\wedge \frac{1-k}{2%
}\right \} \\
&=&f(a)\wedge g(a)\wedge \frac{1-k}{2} \\
&=&(f\wedge _{k}g)(a)\text{.}
\end{eqnarray*}%
Therefore $(f\circ _{k}g)\leq (f\wedge _{k}g)$. By hypothesis $(f\circ
_{k}g)\geq (f\wedge _{k}g)$, thus $(f\circ _{k}g)=(f\wedge _{k}g)$. Hence by
theorem $\ref{th 22}$, $S$ is regular. Also by hypothesis $(f\circ
_{k}g)\geq (f\wedge _{k}g)$, so by theorem $\ref{th 27}$, $S$ is
intra-regular.
\end{proof}

\end{document}